\newcommand{\C}{\mathbb{C}}
\newcommand{\QQ}{\mathbb{Q}}
\newcommand{\NN}{\mathbb{N}}
\newcommand{\PP}{\mathbb{P}}
\newcommand{\LL}{\mathbb{L}}
\newcommand{\MM}{\mathcal M}
\newcommand{\gr}{\hbox{Gr}}
\newcommand{\wt}{\widetilde}
\newcommand{\ima}{\hbox{Im}}
\newcommand{\rom}{\romannumeral}
\newcommand{\ide}{\hbox{id}}
\newtheorem{theorem}{Theorem}[section]
\newtheorem{claim}[theorem]{Claim}
\newtheorem{lemma}[theorem]{Lemma}
\newtheorem{proposition}[theorem]{Proposition}
\newtheorem{conjecture}[theorem]{Conjecture}
\newtheorem{remark}[theorem]{Remark}
\newtheorem{definition}[theorem]{Definition}
\newtheorem{convention}{Conventions}
\newtheorem{question}[theorem]{Question}
\newtheorem{nonumbering}{Theorem}
\newtheorem{nonumberingc}{Corollary}
\newtheorem{nonumberingt}{Acknowledgements}
\begin{document}

\author[Gilberto Bini]
{Gilberto Bini}

\address{Dipartimento di Matematica ``Federigo Enriques'', Universit\`a degli Studi di Milano, Via Saldini 50,
20133 Milano, ITALY.}
\email{gilberto.bini@unimi.it}

\author[Robert Laterveer]
{Robert Laterveer}

\address{Institut de Recherche Math\'ematique Avanc\'ee,
CNRS -- Universit\'e
de Strasbourg,\
7 Rue Ren\'e Des\-car\-tes, 67084 Strasbourg CEDEX,
FRANCE.}
\email{robert.laterveer@math.unistra.fr}

\author[Gianluca Pacienza]
{Gianluca Pacienza}

\address{Institut \'Elie Cartan de Lorraine \`a Nancy,
Universit\'e
de Lorraine, B.P. 70239, Vandoeuvre-l\`es-Nancy CEDEX,
FRANCE.}
\email{gianluca.pacienza@univ-lorraine.fr}

\title{Voisin's Conjecture for Zero--cycles on Calabi--Yau Varieties and their Mirrors}

\begin{abstract} We study a conjecture, due to Voisin, on 0-cycles on varieties with $p_g=1$. Using Kimura's finite dimensional motives and recent results of Vial's on the refined (Chow-)K\"unneth decomposition, we provide a general criterion for Calabi-Yau manifolds of dimension at most $5$ to verify Voisin's conjecture. We then check, using in most cases some cohomological computations on the
mirror partners, that the criterion can be successfully applied to various examples in each dimension up to $5$.
\end{abstract}

\keywords{Algebraic Cycles, Chow Groups, Motives, Finite--dimensional Motives, Calabi--Yau Varieties}

\subjclass{Primary 14C15, 14C25, 14C30.}

\maketitle

\section{Introduction}

For $X$ a smooth projective variety over $\C$, let $A^j(X)$ denote the Chow groups of codimension $j$ algebraic cycles on $X$ modulo rational equivalence.
Chow groups of cycles of codimension $>1$ are still mysterious. As an example, we recall the famous Bloch Conjecture, namely:

\begin{conjecture}[Bloch, \cite{B}]\label{bloch} Let $X$ be a smooth projective complex variety of dimension $n$. The following are equivalent:

\noindent
(\rom1) $A^n_{}(X)\cong \QQ$;

\noindent
(\rom2) the Hodge numbers $h^{j,0}(X)$ are $0$ for all $j>0$.
\end{conjecture}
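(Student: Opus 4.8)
The plan is to treat the two implications separately, since they are of very different depth.

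\textbf{The implication (\rom1)$\Rightarrow$(\rom2).} This is the classical half, and I would deduce it from the Bloch--Srinivas \emph{decomposition of the diagonal}. From $A^n(X)\cong\QQ$ the degree map is an isomorphism, so $A^n(X)_{\mathrm{hom}}=0$; in particular every degree--zero $0$--cycle is already rationally trivial over the generic point $\C(X)$. Spreading out this relation yields, in $A^n(X\times X)\otimes\QQ$, a decomposition
\[
\Delta_X \;=\; Z_1 + Z_2 \quad\text{in}\quad A^n(X\times X)\otimes\QQ,
\]
with $Z_1$ supported on $D\times X$ for a divisor $D\subset X$ and $Z_2$ supported on $X\times\{x_0\}$. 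Letting both sides act on $H^i(X,\OO_X)$ for $i>0$, the diagonal acts as the identity, while the two supported terms act only through the cohomology of lower--dimensional varieties; a standard inductive support argument then forces $H^i(X,\OO_X)=0$ for all $i>0$. By Hodge symmetry $h^{j,0}(X)=\dim H^j(X,\OO_X)$, so this is exactly (\rom2) (the case $i=1$ being the vanishing of the irregularity, equivalently the triviality of $\alb(X)$). The only delicate point is the bookkeeping for the supported terms, and it is by now routine.

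\textbf{The implication (\rom2)$\Rightarrow$(\rom1).} This is the genuine content of the conjecture and is open in general; I would attack it with Kimura's finite--dimensional motives together with a refined Chow--K\"unneth decomposition in the spirit of Vial. First I would \emph{assume} that the Chow motive $h(X)$ is finite--dimensional, and fix a Chow--K\"unneth decomposition $h(X)=\bigoplus_{i=0}^{2n} h^i(X)$, refined so as to split off exactly those summands of the $h^i(X)$ that can act nontrivially on $A^n$. Next I would translate the hypothesis: (\rom2) says $H^j(X,\OO_X)=0$ for $j>0$, which forces the transcendental summands carrying $A^n(X)_{\mathrm{hom}}$ to be \emph{cohomologically trivial}. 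The crucial input is then the nilpotence theorem for finite--dimensional motives: a correspondence that is homologically trivial is nilpotent. Applying it to the projectors onto those cohomologically trivial summands shows that they vanish as Chow motives, whence $A^n(X)_{\mathrm{hom}}=0$, i.e. $A^n(X)\cong\QQ$.

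\textbf{The main obstacle.} The decisive difficulty is that finite--dimensionality of $h(X)$ is itself a deep open conjecture, established only in special cases (abelian varieties, varieties dominated by products of curves, and a short further list). Unconditionally the implication (\rom2)$\Rightarrow$(\rom1) is therefore out of reach, and the best one can realistically prove is a statement conditional on finite--dimensionality. A secondary, more technical obstacle is that in dimension $n\geq 3$ the precise identification of which motivic summands govern $A^n(X)_{\mathrm{hom}}$, and the matching of the vanishing of the $h^{j,0}$ with the vanishing of exactly those summands, requires the full strength of the refined Chow--K\"unneth machinery; this is where I expect the real work to lie once finite--dimensionality is granted.
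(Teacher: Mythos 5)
The statement you were asked to prove is not proven in the paper at all: it is stated as Conjecture \ref{bloch} (Bloch's conjecture), and the authors immediately remark that only the implication (\rom1)$\Rightarrow$(\rom2) is a theorem, due to Bloch--Srinivas and cited as \cite{BS} rather than reproved, while (\rom2)$\Rightarrow$(\rom1) is wide open even for surfaces of general type. Your proposal is consistent with this state of affairs. Your first half is the standard Bloch--Srinivas decomposition-of-the-diagonal argument, which is exactly the method behind the cited theorem, and it closely parallels the one argument of this type the paper does write out, namely the proof of Lemma \ref{lem:warmup} (the analogous implication for Voisin's conjecture), where a decomposition plus a support condition forces a holomorphic piece of cohomology to vanish. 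One detail to tighten in your sketch: the mechanism by which the term $Z_1$ supported on $D\times X$ dies is not an ``inductive support argument'' but a Hodge-type shift --- the relevant action on $H^j(X)$ has image inside the Gysin image of $H^{j-2}(\wt{D})$ for a desingularization $\wt{D}$ of the divisor $D$, and a Gysin pushforward shifts Hodge type by $(1,1)$, so this image has trivial $(j,0)$-part; the term supported on $X\times\{x_0\}$ factors through a point. Comparing the $(j,0)$-components of both sides of $\Delta_X=Z_1+Z_2$ then gives $h^{j,0}(X)=0$ for $j>0$.

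Your second half is, as you say yourself, not a proof: (\rom2)$\Rightarrow$(\rom1) is the conjectural content, and no argument, including yours, closes it. Two remarks on the conditional sketch. First, even granting finite-dimensionality of the motive, your plan needs a refined Chow--K\"unneth decomposition, and the general tool of this kind used in the paper (Theorems \ref{vial} and \ref{vial2}, due to Vial) requires in addition the Lefschetz standard conjecture $B(X)$ and $\dim X\le 5$; so what you could actually prove this way is doubly conditional, and only in low dimension. Second, the nilpotence step is sound: by Theorem \ref{nilp} an idempotent correspondence that is homologically (hence numerically) trivial is nilpotent, hence zero, so a cohomologically trivial motivic summand of a finite-dimensional motive vanishes, and with it its Chow groups; this is precisely how the known surface case ($p_g=q=0$ with finite-dimensional motive) runs. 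So your diagnosis of where the difficulty lies is accurate, but be aware that the paper never claims, and never attempts, a proof of this conjecture; it only proves unconditional results (such as Theorem \ref{main3}) for the different, Voisin-type statement under strong additional hypotheses.
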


The implication from (\rom1) to (\rom2) is actually a theorem \cite{BS}. The conjectural part is the implication from (\rom2) to (\rom1), which has been verified for surfaces not of general type in \cite{BKS}, but
it is wide open for surfaces of general type despite several significant cases have been dealt with over the years.
(see e.g. \cite{Bar, V12, BCGP, V8, PW}).

A natural next step is to consider varieties $X$ with geometric genus $p_g=1$. Here, the kernel  $A^n_{AJ}(X)$ of the Albanese map is huge; in a sense it is ``infinite--dimensional''  \cite{Mum} and \cite{Vbook}. Yet, this huge group should have controlled behaviour on the self--product $X \times X$, according to a conjecture due to Voisin, which is motivated by the Bloch--Beilinson conjectures (see \cite[Section 4.3.5.2]{Vo} for a detailed discussion).

\begin{conjecture}[\cite{V9},  see  \cite{Vo} Conjecture 4.37 for this precise form]\label{conjvois} Let $X$ be a smooth projective complex variety of dimension $n$ with $h^{j,0}(X)=0$ for $0<j<n$. The following are equivalent:

\noindent
(\rom1)
 For any zero--cycles $a,a^\prime\in A^n(X)$ of degree zero, we have
  \[ a\times a^\prime=(-1)^n a^\prime\times a\ \ \ \hbox{in}\ A^{2n}(X\times X)\ .\]
  (Here $a\times a^\prime$ is a short--hand for the cycle class $(p_1)^\ast(a)\cdot(p_2)^\ast(a^\prime)\in A^{2n}(X\times X)$, where $p_1, p_2$ denote projection on the first, resp. second factor.)

 \noindent
 (\rom2) the geometric genus $p_g(X)$ is $\le 1$.
 \end{conjecture}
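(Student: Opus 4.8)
The plan is to establish the two implications of the conjecture by quite different means, concentrating the motivic machinery on the hard one. The implication (i) $\Rightarrow$ (ii) is the accessible direction, parallel to the Bloch--Srinivas half of Bloch's conjecture above. The swap involution $\iota\colon X\times X\to X\times X$ acts on $H^n(X)\otimes H^n(X)\subset H^{2n}(X\times X)$ as $(-1)^n$ times the transposition, so the relation in (i) pushes the class of $a\times a'$ into the symmetric part and annihilates its anti-invariant contribution; the Hodge-theoretically deepest piece of the latter is $\wedge^2 H^{n,0}(X)$, of dimension $\binom{p_g}{2}$. To turn this into a statement about cycles I would use Mumford's infinitesimal method: the ``tangent space'' to $A^n_{AJ}(X)$ is governed by $H^{n,0}(X)$, and for a product cycle the associated antisymmetric invariant takes values in $\wedge^2 H^{n,0}(X)$. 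If $p_g\ge 2$, choosing $\omega,\omega'\in H^{n,0}(X)$ linearly independent and varying the cycles in a family produces a pair $a,a'$ whose defect $\delta=a\times a'-(-1)^n a'\times a$ has nonzero invariant $\omega\wedge\omega'$, contradicting (i). Hence (i) forces $\binom{p_g}{2}=0$, that is $p_g\le 1$.

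The substantial direction is (ii) $\Rightarrow$ (i), which I would not attempt unconditionally but under the standing hypothesis that $X$, and hence $X\times X$, has finite-dimensional motive in the sense of Kimura--O'Sullivan. The first step is to reformulate (i) motivically: the cohomological profile $h^{j,0}(X)=0$ for $0<j<n$ concentrates the transcendental motive in middle degree, so that, after fixing a Chow--K\"unneth decomposition with middle transcendental summand $t_n(X)$, the defect $\delta$ is governed by the Chow group $A^{2n}$ of the anti-symmetric part of $t_n(X)\otimes t_n(X)$. The point of Vial's refined Chow--K\"unneth decomposition is that in dimension at most $5$ it yields a splitting of $h(X)$ compatible enough with the product structure to isolate this summand and to equip $t_n(X)\otimes t_n(X)$ with the symmetric/antisymmetric decomposition matching the action of $\iota$.

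The heart of the argument is then the passage from cohomological to rational-equivalence vanishing. Under (ii) the governing obstruction $\wedge^2 H^{n,0}(X)$ vanishes, so the defect is homologically trivial on the summand carrying it; to upgrade this to $\delta=0$ in $A^{2n}(X\times X)$ I would invoke finite-dimensionality, under which a numerically trivial self-correspondence of a finite-dimensional motive is nilpotent. Combined with the refined decomposition, this nilpotence forces the anti-symmetric part of $A^{2n}(t_n(X)\otimes t_n(X))$ to vanish once its cohomological shadow is zero. Both hypotheses are indispensable here: finite-dimensionality to kill the lower Bloch--Beilinson pieces, and Vial's refinement to guarantee that the tensor square splits into pieces on which the Chow groups are controlled.

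I expect the main obstacle to be exactly this lifting step together with its prerequisite, the verification of finite-dimensionality for the concrete Calabi--Yau examples. Finite-dimensionality is presently known only for motives in the tensor category generated by abelian varieties and curves, so for each example in dimensions up to $5$ one must exhibit enough structure --- a dominant map from a product of curves, a quotient presentation, or a Borcea--Voisin type construction --- to place $X$ in Kimura's class; this is where I expect the cohomological computations on the mirror partners to intervene, both to certify the profile $h^{j,0}(X)=0$ for $0<j<n$ with $p_g\le 1$ via the mirror Hodge numbers and, where possible, to access finite-dimensionality through the mirror's explicit geometry. A further subtlety in the lifting step is the usual failure of Chow groups to respect the motivic grading strictly, so one must check that $\delta$ genuinely factors through the isolated summand rather than acquiring uncontrolled lower-weight components.
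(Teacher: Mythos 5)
Your proposal addresses a statement that the paper itself treats asymmetrically: the implication (i)$\Rightarrow$(ii) is proved unconditionally (Lemma \ref{lem:warmup}), while (ii)$\Rightarrow$(i) remains conjectural and is only established for special varieties under extra hypotheses (Theorem \ref{main3}). For (i)$\Rightarrow$(ii) you propose a Mumford-style infinitesimal argument in contrapositive form: if $p_g\ge 2$, produce cycles $a,a'$ whose defect detects a nonzero class $\omega\wedge\omega'\in\wedge^2H^{n,0}(X)$. The paper instead argues softly \`a la Bloch--Srinivas: assuming (i), the correspondence $p=(\Delta_X-(-1)^n\Gamma_\iota)\circ(\pi\times\pi)$ kills all of $A_0(X\times X)$ (since $A_0$ is generated by points and (i) applies pairwise), hence by Bloch--Srinivas $p$ is rationally equivalent to a cycle supported on $X\times X\times D$ for a divisor $D$, so $\wedge^2H^{n,0}(X)$, being supported on a divisor, vanishes. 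Your route is not wrong in spirit, but the sentence ``varying the cycles in a family produces a pair $a,a'$ whose defect has nonzero invariant $\omega\wedge\omega'$'' is precisely the hard nonvanishing content, asserted without proof; the standard rigorous substitute for that assertion is exactly the Bloch--Srinivas decomposition argument the paper uses, so your sketch defers to the paper's method rather than replacing it.

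The genuine gap is in your conditional treatment of (ii)$\Rightarrow$(i). You claim that finite-dimensionality of the motive, together with $p_g\le 1$ and Vial's refined decomposition, suffices: ``the defect is homologically trivial on the summand carrying it'' and ``nilpotence forces the anti-symmetric part of $A^{2n}(t_n(X)\otimes t_n(X))$ to vanish once its cohomological shadow is zero.'' This mechanism fails, because the cohomological shadow in question is $\wedge^2H^n_{tr}(X)$, not $\wedge^2H^{n,0}(X)$, and it does \emph{not} vanish when $p_g\le 1$: for the Fermat quintic threefold $\dim H^3_{tr}=4$, so $\wedge^2H^3_{tr}$ is $6$-dimensional and contains non-algebraic classes. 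Consequently the natural projector $\Lambda_{tr}$ is not homologically trivial, and the nilpotence theorem (which applies to homologically/numerically trivial correspondences, not to cycle classes) has nothing to bite on. The paper's proof needs two further inputs that are absent from your sketch: the $\wt N^1$-maximality hypothesis $\dim H^n_{tr}(X)=2$, which via Lemma \ref{one} forces $\wedge^2H^n_{tr}(X)$ to be one-dimensional and spanned by the \emph{algebraic} class $\Pi_{n,0}$, so that $\Lambda_{tr}$ minus an explicit cycle $\gamma$ supported on $P\times P$ becomes homologically trivial and nilpotence can be applied; and the hypothesis $\wt N^1H^i(X)=H^i(X)$ for $0<i<n$, needed in Lemma \ref{same} to identify the action of $\Lambda_{tr}$ with that of $\Lambda$ on the cycles of interest. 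Indeed, ``finite-dimensionality $+$ $p_g\le 1$ implies (i)'' is itself an open problem (already for $K3$ surfaces), so your conditional claim overreaches what any known argument, including the paper's, can deliver.
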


 Again, the implication from (\rom1) to (\rom2) is actually a theorem (this can be proven \`a la Bloch--Srinivas \cite{BS}, see Lemma \ref{lem:warmup} below). The conjectural part is the implication from (\rom2) to (\rom1), which is still wide open for a general $K3$ surface (cf. \cite{V9}, \cite{moi}, \cite{desult}, \cite{todo}, \cite{epw} for some cases where this conjecture is verified).

 \vskip 0.3cm

In the present article we present a general criterion to check Voisin's conjecture (or a weak variant of it, cf. Theorem \ref{thm:weak}) for specific varieties (see section 1 for all the relevant definitions and explanations).


\begin{nonumbering}[=theorem \ref{main3}] Let $X$ be a smooth projective variety of dimension $n\le 5$ with $h^{i,0}(X)=0,\ 0<i<n$ and $p_g(X)=1$. Assume moreover that:

\noindent
(\rom1) $X$ is rationally dominated by a variety $X^\prime$ of dimension $n$, and $X^\prime$ has finite--dimensional motive and $B(X^\prime)$ is true;


\noindent
(\rom2) 
$X$ is $\wt N^1$-maximal.

\noindent
(\rom3) $\wt N^1 H^{i}(X)= H^{i}(X)$, for $0<i<n$.

Then conjecture \ref{conjvois} is true for $X$, i.e. any $a,a^\prime\in A^n_{hom}(X)$ verify
  \[ a\times a^\prime = (-1)^n a^\prime\times a\ \ \  \hbox{in}\ A^{2n}(X\times X)\ .\]
\end{nonumbering}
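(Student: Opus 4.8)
The plan is to translate Voisin's symmetry statement into the vanishing of an idempotent self--correspondence acting on a small motivic summand of $X\times X$, and then to play off the cohomological triviality of that summand against finite--dimensionality. First I would transfer the standing hypotheses from $X^\prime$ to $X$: a rational dominant map $X^\prime\dashrightarrow X$ of varieties of the same dimension $n$ produces, after resolving indeterminacies, a correspondence $\Gamma\in A^n(X^\prime\times X)$ whose transpose composes with it to a nonzero multiple of $\Delta_X$, so that $\mathfrak h(X)$ is a direct summand of $\mathfrak h(X^\prime)$; hence $\mathfrak h(X)$ is finite--dimensional and $B(X)$ holds, and therefore $B(X\times X)$ as well by stability of $B$ under products. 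With finite--dimensionality, $B(X)$ and the niveau hypotheses (\rom2)--(\rom3) in hand, Vial's refined Chow--K\"unneth decomposition applies in the range $n\le 5$ and yields a self--dual decomposition $\mathfrak h(X)=\bigoplus_i\mathfrak h^i(X)$ together with a splitting $\mathfrak h^n(X)=\mathfrak t^n(X)\oplus \mathfrak h^n_{alg}(X)$, where the transcendental part $\mathfrak t^n(X)$ realizes the interesting cohomology $H^n_{tr}(X)$.

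Next I would reformulate the conclusion. Write $\iota\colon X\times X\to X\times X$ for the involution exchanging the factors, so that $\iota_\ast(a\times a^\prime)=a^\prime\times a$, and set $e:=\tfrac12(\Delta_{X\times X}-(-1)^n\,\Gamma_\iota)\in A^{2n}((X\times X)^2)$. This is an idempotent correspondence, and the desired identity $a\times a^\prime=(-1)^n a^\prime\times a$ is exactly the assertion $e_\ast(a\times a^\prime)=0$. The point is then to locate $a\times a^\prime$ in the motivic decomposition of $X\times X$. Since $h^{1,0}(X)=0$ the Albanese of $X$ is trivial, so $A^n_{hom}(X)=A^n_{AJ}(X)$; and in the range $n\le 5$ Vial's decomposition identifies $A^n_{AJ}(X)$ with $A^n(\mathfrak t^n(X))$, the hypotheses (\rom2)--(\rom3) guaranteeing that neither the summands $\mathfrak h^i(X)$ with $i\neq n$ nor the algebraic part $\mathfrak h^n_{alg}(X)$ contribute to the Albanese kernel. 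Consequently $a\times a^\prime$ is carried by the summand $M:=\mathfrak t^n(X)\otimes\mathfrak t^n(X)$ of $\mathfrak h^{2n}(X\times X)$.

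The heart of the argument is the cohomological realization of $e$ on $M$. On $H^n(X)\otimes H^n(X)$ the involution acts by the Koszul sign, $\iota_\ast(\alpha\otimes\beta)=(-1)^{n^2}\beta\otimes\alpha$, and since $n^2\equiv n\pmod 2$ the correspondence $e$ realizes, on $H^\ast(M)=H^n_{tr}(X)\otimes H^n_{tr}(X)$, the projector $\tfrac12(\mathrm{id}-\tau)$ onto the exterior square $\wedge^2 H^n_{tr}(X)$, where $\tau$ is the bare transposition. Now I invoke the two remaining hypotheses: $p_g(X)=1$ forces $h^{n,0}=1$, while $\wt N^1$--maximality together with (\rom3) forces $H^n_{tr}(X)$ to be the minimal Hodge structure compatible with this, namely the two--dimensional one of type $\{(n,0),(0,n)\}$. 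Therefore $\wedge^2 H^n_{tr}(X)$ is one--dimensional of pure Tate type $(n,n)$. Since $\mathfrak t^n(X)$ is finite--dimensional and $B(X\times X)$ holds, the rank--one summand $\wedge^2\mathfrak t^n(X)$ is cut out by an algebraic idempotent realizing this Tate class, so $\wedge^2\mathfrak t^n(X)\cong\LL^{n}$ is a Lefschetz motive.

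Finally I would assemble the pieces. The Lefschetz motive $\LL^n$ has no homologically trivial cycles, i.e.\ $A^{2n}_{hom}(\wedge^2\mathfrak t^n(X))=0$. The cycle $a\times a^\prime$ has degree $\deg(a)\deg(a^\prime)=0$ and is thus homologically trivial; hence so is its component $e_\ast(a\times a^\prime)$, which lives in $A^{2n}(\wedge^2\mathfrak t^n(X))$. By the vanishing just noted this component is zero, which is precisely $a\times a^\prime=(-1)^n a^\prime\times a$. I expect the main obstacle to be not the sign bookkeeping but the two structural inputs that make the reduction legitimate in the range $n\le 5$: first, the precise identification, via Vial's refined decomposition, of $A^n_{AJ}(X)$ with $A^n(\mathfrak t^n(X))$, so that $a\times a^\prime$ really is supported on $M$ with no spurious summand interfering; and second, the passage from the cohomological triviality of $\wedge^2 H^n_{tr}(X)$ to the genuinely motivic statement $\wedge^2\mathfrak t^n(X)\cong\LL^n$, for which finite--dimensionality and the standard conjecture $B$ are essential.
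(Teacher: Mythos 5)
Your outline reproduces the paper's skeleton (the symmetrizing correspondence $\Lambda$, localization of $a\times a^\prime$ in the transcendental part via Vial's decomposition, the two--dimensionality of $H^n_{tr}(X)$, and Kimura nilpotence to pass from cohomology to Chow groups), but two of its essential steps are unjustified, and they are exactly the two places where the paper has to do real work. The first is your opening move: you cannot transfer finite--dimensionality and $B$ from $X^\prime$ to $X$. The claim that a dominant rational map between smooth projective varieties of the same dimension makes $\mathfrak{h}(X)$ a direct summand of $\mathfrak{h}(X^\prime)$ is false: take $X^\prime=\PP^2$ and $X$ the blow--up of $\PP^2$ at a point, with the inverse of the blow--down as the rational map; then $b_2(X)=2>1=b_2(X^\prime)$, so $\mathfrak{h}(X)$ cannot be a summand of $\mathfrak{h}(X^\prime)$. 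Concretely, the composition $\Psi\circ{}^t\Psi$ of the graph closure with its transpose is not $d\Delta_X$ but $d\Delta_X$ plus correction terms supported on $W\times W$ for a proper closed $W\subsetneq X$ (in the example, $\Delta_X+[E\times E]$). Resolving indeterminacies does not help: it exhibits $\mathfrak{h}(X)$ as a summand of $\mathfrak{h}(\wt{X^\prime})$ for a blow--up $\wt{X^\prime}$ of $X^\prime$, whose motive contains the motives of the centres, and those are not known to be finite--dimensional (nor to satisfy $B$) when they are surfaces or threefolds --- i.e. precisely in the range $n=4,5$ that the theorem covers. This is why the paper never asserts finite--dimensionality or $B$ for $X$ itself: it applies Theorems \ref{vial} and \ref{vial2} to $X^\prime$ only, defines $\Pi_{n,0}=\frac{1}{d}\Psi\circ\Pi_{n,0}^{X^\prime}\circ{}^t\Psi$ on $X$, uses that the correction terms act trivially on zero--cycles (equation (\ref{id})), and runs the nilpotence argument on $(X^\prime)^4$ rather than on $X^4$. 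Since your plan needs Vial's Chow--K\"unneth decomposition on $X$ itself and finite--dimensionality of $\mathfrak{t}^n(X)$, this gap propagates through your whole argument.

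The second gap is the assertion that finite--dimensionality plus $B(X\times X)$ yield $\wedge^2\mathfrak{t}^n(X)\cong\LL^n$. What those hypotheses yield is only the implication: \emph{if} the homological motive of $\wedge^2\mathfrak{t}^n(X)$ is isomorphic to $\LL^n$, \emph{then} so is the Chow motive (nilpotence lets one lift the isomorphism). But the homological statement requires a nonzero algebraic class inside the one--dimensional space $\wedge^2 H^n_{tr}(X)$, i.e. a case of the Hodge conjecture; being ``cut out by an algebraic idempotent'' with realization $\QQ(-n)$ is not enough, since such a motive need not admit any nonzero morphism to or from $\LL^n$ (conjecture $B$ gives algebraicity of inverse Lefschetz operators and K\"unneth projectors, not of an arbitrary rational $(n,n)$--class). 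Supplying that algebraic class is precisely the content of the paper's Lemma \ref{one}: the cohomology class of the algebraic cycle $\Pi_{n,0}$ itself lies in $\bigl(H^n_{tr}(X)\otimes H^n_{tr}(X)\bigr)\cap F^n$, is nonzero, and generates, so the generator of $\wedge^2 H^n_{tr}(X)$ is algebraic with no conjectural input; the support $P$ of that cycle is then what allows the paper to write $\Lambda_{tr}$ as homologous to a cycle supported on $P\times P$ and to conclude. Your proposal asserts this key isomorphism instead of proving it, and it is the mathematical heart of the theorem; your closing remark that finite--dimensionality and $B$ are ``essential'' for this step is an acknowledgement of the gap, not an argument filling it.
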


The proof of Theorem \ref{main3} relies, among other things, on results by Vial on the refined Chow-K\"unneth decomposition \cite{V4}, from which the hypotheses on $X'$ are thus inherited.

Our criterion can be effectively used to provide explicit examples in any dimension $\leq 5$.
Most of them are given by hypersurfaces of Fermat type in a (weighted) projective space (see Section 5 for all the examples).

The first and third hypotheses of our criterion hold for any Fermat hypersurface. As for the second, it seems the most delicate to verify in pratice.
In certain cases it is possible to check the second hypothesis by direct computation---e.g. for the Fermat sextic $X$ in $\mathbb P^5$, using results by Beauville, Movasati and the classical inductive structure of Fermat hypersurfaces (proposition \ref{sext}). Hence, we obtain the following explicit example:

\begin{nonumberingc}[=proposition \ref{sext}] Let $X\subset\PP^5(\C)$ be the sextic fourfold defined as
  \[ x_0^6 + \cdots +x_5^6=0\ .\]
  Then conjecture \ref{conjvois} is true for $X$, i.e. any $a,a^\prime\in A^4_{hom}(X)$ verify
  \[ a\times a^\prime =  a^\prime\times a\ \ \  \hbox{in}\ A^{8}(X\times X)\ .\]
\end{nonumberingc}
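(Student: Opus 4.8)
The plan is to verify the three hypotheses of the general criterion (Theorem \ref{main3}) for the Fermat sextic fourfold $X\subset\PP^5(\C)$ and then invoke that theorem. First I would confirm the cohomological normalization: the Fermat sextic in $\PP^5$ is a Calabi--Yau fourfold with $h^{i,0}(X)=0$ for $0<i<4$ and $p_g(X)=h^{4,0}(X)=1$ (the canonical bundle is trivial since $6=5+1$), so $X$ falls squarely within the scope of the criterion. Hypotheses (\rom1) and (\rom3) are, as the excerpt indicates, automatic for any Fermat hypersurface: finite--dimensionality of the motive and the standard Lefschetz--type conjecture $B$ hold for Fermat varieties (one may take $X'=X$ itself, exploiting the fact that Fermat hypersurfaces dominate and are dominated by products of Fermat curves, whose motives are finite--dimensional as they are contained in the category generated by abelian varieties), and the vanishing of the primitive middle cohomology in the relevant range forces $\wt N^1 H^{i}(X)=H^{i}(X)$ for $0<i<4$ because these groups are spanned by classes of the hyperplane section, hence of pure Hodge--Tate type and coniveau $\ge 1$.

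The substance of the proof therefore lies entirely in verifying hypothesis (\rom2), namely that $X$ is $\wt N^1$-maximal. This is the step I expect to be the main obstacle, and it is precisely where I would deploy the classical inductive structure of Fermat hypersurfaces together with the computations of Beauville and Movasati. The strategy is to understand the Hodge structure on $H^4(X)$ explicitly: $\wt N^1$-maximality amounts to showing that the sub--Hodge structure of $H^4(X)$ cut out by the coniveau/refined filtration is as large as the general criterion requires, equivalently that the transcendental part of $H^4(X)$ not accounted for by the coniveau-one piece is controlled by $h^{4,0}=1$. Concretely I would use Shioda's decomposition of the cohomology of the Fermat sextic into eigenspaces under the action of $(\ZZ/6\ZZ)^5$, identify which eigenspaces (characters) contribute to $H^{4,0}$, $H^{3,1}$, etc., and then appeal to Movasati's explicit determination of the Hodge loci and algebraic (sub-Hodge) structure, combined with Beauville's results on the geometry of lines and the sub-Hodge structures they generate, to exhibit enough algebraic cycles realizing the coniveau filtration.

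The inductive structure enters as follows: the Fermat sextic fourfold is built, via Shioda--Katsura inductive constructions, from Fermat sextic surfaces and curves, whose Hodge structures and algebraic cycles are well understood; pulling back and pushing forward cycles along the standard correspondences between $X$ and these lower--dimensional Fermat varieties lets one produce the algebraic classes needed to certify $\wt N^1$-maximality. I would check that the non--Tate, non--$(4,0)$ part of the middle cohomology is supported on a divisor, i.e. has coniveau $\ge 1$, by realizing the corresponding eigenspaces as coming from Fermat surfaces (where the analogous statement is the Tate/Hodge conjecture for Fermat surfaces, known by work of Shioda, Ran, and others).

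Once hypothesis (\rom2) is established, the conclusion is immediate: since $n=4$ is even, $(-1)^n=1$, and Theorem \ref{main3} yields $a\times a' = a'\times a$ in $A^{8}(X\times X)$ for all $a,a'\in A^4_{hom}(X)$, which is exactly the assertion of the proposition. The only genuine mathematical work beyond bookkeeping is the eigenspace--by--eigenspace verification of $\wt N^1$-maximality, and I anticipate that the most delicate point will be ensuring that \emph{every} eigenspace outside the $(4,0)$-$(0,4)$ part genuinely has coniveau one—that is, that no ``extra'' transcendental piece survives—which is where the combination of Beauville's and Movasati's explicit Hodge-theoretic computations becomes indispensable.
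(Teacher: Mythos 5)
Your global strategy---reduce to Theorem \ref{main3}, observe that hypotheses (\rom1) and (\rom3) are essentially automatic for a Fermat hypersurface, and concentrate all the effort on the $\wt N^1$-maximality (\rom2)---is exactly the paper's, and your treatment of (\rom1) and (\rom3) is correct. The problem is that for (\rom2), which you yourself identify as the entire substance of the proof, you only name ingredients (Beauville, Movasati, the Shioda--Katsura inductive structure) without producing an argument, and your sketch of how they would combine misstates what these results say. Beauville's input is not about ``the geometry of lines'': it is the $\rho$-maximality of the Fermat sextic \emph{surface} $S\subset\PP^3$, i.e. $H^2_{tr}(S)\otimes\C=H^{2,0}(S)\oplus H^{0,2}(S)$ \cite{Beau}. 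The paper uses it as follows: the Shioda--Katsura map $\phi\colon S\times S\dashrightarrow X$ induces a \emph{surjection of Hodge structures} $\phi_\ast\colon H^4_{tr}(S\times S)\twoheadrightarrow H^4_{tr}(X)$, and since $H^4_{tr}(S\times S)\subset H^2_{tr}(S)\otimes H^2_{tr}(S)$, the $\rho$-maximality of $S$ forces $H^4_{tr}(X)\otimes\C\subset H^{4,0}\oplus H^{2,2}\oplus H^{0,4}$. This one step is what disposes of the $426$-dimensional $H^{3,1}$ (and its conjugate); your eigenspace-by-eigenspace plan would have to re-derive this by hand, which you do not do. Movasati's input is the $\wt N^2$-maximality of $X$, i.e. all of $H^{2,2}(X)$ is spanned by algebraic classes \cite{Mov}; the paper then eliminates the possible $(2,2)$-part of $H^4_{tr}(X)$ by restricting to the Zariski open complement $U$ of the supports of those classes, where $H^{2,2}(X)$ dies while $\tau^\ast\colon H^4_{tr}(X)\to H^4(U)$ is injective, giving $\dim H^4_{tr}(X)=2$ and hence $N^1$-maximality via Proposition \ref{N1max}. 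Your proposal contains neither the surjectivity statement nor the restriction-to-$U$ argument, so it never actually bounds $\dim H^4_{tr}(X)$.

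There is a second concrete gap: you conflate $N^1$-maximality with $\wt N^1$-maximality. Everything sketched above, even when completed, is a \emph{coniveau} statement, whereas hypothesis (\rom2) of Theorem \ref{main3} concerns the \emph{niveau} filtration, which requires classes to be realized by correspondences from lower-dimensional varieties. The paper needs a separate argument for the equality $\wt N^1 H^4(X)=N^1 H^4(X)$: it resolves the indeterminacy of $\phi$ by blowing up $S\times S$ along $C\times C$, reduces to proving $\wt N^1 H^4(S\times S)=N^1 H^4(S\times S)$, and establishes that via the decomposition $H^2(S)=T\oplus N$ together with the Hodge conjecture for $S\times S$, known by Shioda \cite{Shi}. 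Your sketch gestures at the ``Tate/Hodge conjecture for Fermat surfaces'' but never isolates this upgrade from coniveau to niveau as a separate step, and without it the verification of (\rom2) is incomplete even granting your coniveau computations.
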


In other cases (for instance for the Fermat quintic 3-fold), despite the fact that the dimension of  $H^n(X)$ is quite large, it is possible to control the dimension of $H^n_{tr}(X)$ by passing to the mirror partner of $X$, which can be explicitly described in the Fermat case. Among other examples, we obtain in this way the $\wt N^1$-maximality and therefore Voisin's conjecture in the following case:

\begin{nonumberingc}[=proposition \ref{3fold}] Let $X\subset\PP(1^4,2)$ be the Calabi--Yau threefold defined as
  \[ x_0^6 + x_1^6 + x_2^6 + x_3^6 + x_4^3 =0\ .\]
  Then conjecture \ref{conjvois} is true for $X$, i.e. any $a,a^\prime\in A^3_{hom}(X)$ verify
  \[ a\times a^\prime =  -a^\prime\times a\ \ \  \hbox{in}\ A^{6}(X\times X)\ .\]
\end{nonumberingc}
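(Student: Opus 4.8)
The plan is to obtain the statement as a direct application of Theorem \ref{main3} with $n=3$. Since $X$ is a Calabi--Yau threefold one has $h^{1,0}(X)=h^{2,0}(X)=0$ and $p_g(X)=h^{3,0}(X)=1$, so the standing hypotheses of Theorem \ref{main3} are met, and the odd parity of $n=3$ is what produces the sign $-1$ in the conclusion. Hypotheses (\rom1) and (\rom3) hold for $X$ because it is of Fermat type, as already recalled in the introduction: (\rom3) because $H^1(X)=0$ while $H^2(X)$, having $h^{2,0}(X)=0$, is spanned by divisor classes and therefore lies in $\wt N^1 H^2(X)$; and (\rom1) because Shioda's inductive structure exhibits the motive of a Fermat hypersurface as a direct summand of the motive of a product of Fermat curves, whence finite--dimensionality in Kimura's sense and the Lefschetz property $B$, so that one may take $X^\prime=X$ (or a smooth model of it).

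The whole difficulty is thus concentrated in hypothesis (\rom2), the $\wt N^1$--maximality of $X$, which by definition amounts to showing that the transcendental cohomology $H^3_{tr}(X)=H^3(X)/\wt N^1 H^3(X)$ has the smallest dimension compatible with $p_g(X)=1$; equivalently, that $\wt N^1 H^3(X)$ absorbs all of $H^3(X)$ except a small transcendental piece. The plan is to decompose $H^3(X)$ into eigenspaces $H^3(X)_\chi$ for the action of the group of diagonal automorphisms preserving the defining equation, following Shioda. Each such eigenspace is either inherited from the cohomology of a lower--dimensional Fermat variety---and is then of coniveau $\ge 1$, hence contained in $\wt N^1 H^3(X)$ via the geometric realization furnished by the inductive structure---or else it survives in $H^3_{tr}(X)$. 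The surviving classes are exactly those invariant under the Greene--Plesser group $G$ used to build the mirror.

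To compute the dimension of this invariant piece I would pass to the mirror partner $X^\vee$, constructed \`a la Greene--Plesser as a crepant resolution of the quotient $X/G$. Rational cohomology of the quotient is the $G$--invariant cohomology of $X$, so $H^3(X/G)\cong H^3(X)^G$; as the invariant part carries no extra divisor--supported classes it coincides with $H^3_{tr}(X)$, and the transcendental $H^3$ is controlled on the mirror, giving $H^3_{tr}(X)\cong H^3_{tr}(X^\vee)$ and hence $\dim H^3_{tr}(X)=2+2\,h^{2,1}(X^\vee)$. The Greene--Plesser relation $h^{2,1}(X^\vee)=h^{1,1}(X)$ shows this number is small and can be read off explicitly from the weighted--Fermat data; feeding it into the definition of $\wt N^1$--maximality verifies (\rom2). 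With (\rom1), (\rom2) and (\rom3) in hand, Theorem \ref{main3} applies and yields $a\times a^\prime=-a^\prime\times a$ for all $a,a^\prime\in A^3_{hom}(X)$.

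The main obstacle is the verification of (\rom2), and within it two points demand care: first, showing that the non--invariant Shioda eigenspaces really are of geometric coniveau $\ge 1$ (a generalized Hodge conjecture--type input, which the inductive structure of Fermat hypersurfaces is precisely designed to supply); and second, carrying out the explicit cohomological computation on the mirror $X^\vee$ to pin down $h^{2,1}(X^\vee)$, and therefore the dimension of $H^3_{tr}(X)$. The remaining hypotheses, by contrast, are formal consequences of $X$ being a Fermat--type Calabi--Yau threefold.
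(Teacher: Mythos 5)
Your reduction to Theorem \ref{main3} and your treatment of hypotheses (\rom1) and (\rom3) are fine, and you correctly single out the $\wt N^1$--maximality (\rom2) as the crux; but your mirror computation does not establish it --- in fact your own numbers contradict your conclusion. You claim $H^3_{tr}(X)$ coincides with the $G$--invariant part of $H^3(X)$, i.e.\ with the mirror's $H^3$, of dimension $2+2h^{2,1}(X^\vee)=2+2h^{1,1}(X)=4$. But by Proposition \ref{N1max}, $N^1$--maximality means precisely $\dim H^3_{tr}(X)=2p_g(X)=2$; dimension $4$ is exactly what one finds for the Fermat quintic (Lemma \ref{quintic}), for which maximality \emph{fails} and only the weak statement, Theorem \ref{thm:weak}, applies. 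So ``feeding $4$ into the definition'' does not verify (\rom2): with your computation Theorem \ref{main3} is simply not applicable, and you would only obtain the alternating identity on $X^4$, not the asserted identity on $X\times X$. The correct general statement is only an inclusion $H^3_{tr}(X)\subseteq H^3(X)^G$ (the invariant part is a rational Hodge substructure containing $H^{3,0}$), and the whole point of this example is that the inclusion is \emph{strict}.

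The missing idea --- which is the actual content of the paper's proof --- is an extra symmetry of the mirror. The coordinate permutation $\tau$ of $\PP(1,1,1,1,2)$ normalizes the Greene--Plesser group $\widehat G$, hence descends to the mirror $\widehat X$; since the $\widehat G$--invariant part of $H^{1,2}(X)$ is one--dimensional, $\tau$ acts as the identity on $H^{2,1}(\widehat X)\oplus H^{1,2}(\widehat X)$, while an explicit computation with the residue expression of the holomorphic $3$--form shows that $\tau$ acts as $-1$ on $H^{3,0}(\widehat X)\oplus H^{0,3}(\widehat X)$. The eigenvalues $\pm 1$ being rational, the $4$--dimensional space $H^3(\widehat X,\QQ)$ splits into two rational $2$--dimensional eigenspaces, so $H^{3,0}\oplus H^{0,3}$ is defined over $\QQ$; by Proposition \ref{N1max} this gives $N^1$--maximality of $\widehat X$, hence of $X$ (their $H^3_{tr}$'s are isomorphic Hodge structures), and in dimension $3$ this coincides with $\wt N^1$--maximality (Remark \ref{beau}). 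The contrast with the quintic is instructive: there the analogous automorphism has order $5$, its eigenvalues are primitive fifth roots of unity, and $H^{3,0}\oplus H^{0,3}$ is a sum of two eigenspaces whose eigenvalue set is not Galois--stable, hence not defined over $\QQ$ --- which is exactly why the quintic is not maximal while this sextic example is. Without an argument of this kind (or the Fermat--motive computations of \cite{KY}, \cite{GM}, which the paper cites as alternative proofs), your approach cannot get below dimension $4$, and the proposition does not follow.
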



\vskip0.6cm

\begin{convention}{\em In this note, the word {\sl variety\/} will refer to a reduced irreducible scheme of finite type over $\C$.

{\bf All Chow groups will be with rational coefficients:}
For a variety $X$, we will write $A_j(X)$ for the Chow group of $j$--dimensional cycles on $X$
with $\QQ$--coefficients.
For $X$ smooth of dimension $n$ the notations $A_j(X)$ and $A^{n-j}(X)$ will be used interchangeably.

The notations
$A^j_{hom}(X)$ and $A^j_{AJ}(X)$ will be used to indicate the subgroups of
homologically, resp. Abel--Jacobi trivial cycles.
The (contravariant) category of Chow motives (i.e., pure motives with respect to rational equivalence as in \cite{Sc}, \cite{MNP}) will be denoted $\MM_{\rm rat}$.

We will write $H^j(X)$ for singular cohomology $H^j(X,\QQ)$.
}
\end{convention}

%
\section{Preliminaries}
%

%
\subsection{Warm-up}
%
We begin with the following result
for which we could not find a reference in the literature, although it may be well-known to experts.
\begin{lemma}\label{lem:warmup}
Let $X$ be a smooth projective complex variety of dimension $n$ with $h^{j,0}(X)=0$ for $0<j<n$. Consider the following conditions:

\noindent
(\rom1)
 For any zero--cycles $a,a^\prime\in A^n(X)$ of degree zero, we have
  \[ a\times a^\prime=(-1)^n a^\prime\times a\ \ \ \hbox{in}\ A^{2n}(X\times X)\ .\]
  (Here $a\times a^\prime$ is a short--hand for the cycle class $(p_1)^\ast(a)\cdot(p_2)^\ast(a^\prime)\in A^{2n}(X\times X)$, where $p_1, p_2$ denote projection on the first, resp. second factor.)

 \noindent
 (\rom2) the geometric genus $p_g(X)$ is $\le 1$.

 Then (i) implies (ii).
 \end{lemma}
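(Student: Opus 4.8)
The plan is to prove the contrapositive, using the mechanism underlying Mumford's theorem on the infinite--dimensionality of $A_0$ (this is the Bloch--Srinivas method alluded to in the excerpt, cf.\ \cite{Mum}, \cite{Vbook}). The single input I need is the following: if $B,W$ are smooth projective and $\Gamma\in A^{\dim W}(B\times W)$ is a correspondence whose fibre $\Gamma_b\in A_0(W)$ vanishes for \emph{every} $b\in B$, then the induced pullback $\Gamma^\ast$ on global holomorphic forms $H^0(W,\Omega^k_W)\to H^0(B,\Omega^k_B)$ is zero for all $k\ge 1$. Assuming (i) holds, I will show that $p_g(X)\ge 2$ contradicts this vanishing, whence $p_g(X)\le 1$, which is (ii).

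Suppose then $p_g(X)\ge 2$ and fix two linearly independent $n$--forms $\omega_1,\omega_2\in H^{n,0}(X)=H^0(X,\Omega^n_X)$ together with a base point $o\in X$. Put $W=X\times X$ with projections $p_1,p_2:W\to X$, and consider the holomorphic $2n$--form $\Omega:=p_1^\ast\omega_1\wedge p_2^\ast\omega_2\in H^0(W,\Omega^{2n}_W)$. Over the base $B=X\times X$, with projections $q_1,q_2:B\to X$ and running point $(x,y)$, set $a=x-o$ and $a^\prime=y-o$ and form the family of degree--zero zero--cycles
\[ \Gamma_{(x,y)}:=a\times a^\prime-(-1)^n a^\prime\times a\ \in\ A^{2n}(W)\ . \]
Written out, $\Gamma_{(x,y)}$ is a combination of $(x,y)$, $(y,x)$ and further terms in each of which at least one coordinate equals $o$; it is therefore given by a correspondence $\Gamma\in A^{2n}(B\times W)$ assembled from the graphs of $(x,y)\mapsto(x,y)$, of $(x,y)\mapsto(y,x)$, and of the maps sending one or both coordinates to $o$. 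By hypothesis (i) we have $\Gamma_{(x,y)}=0$ in $A^{2n}(W)$ for every $(x,y)\in B$, so $\Gamma$ is fibrewise null.

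Next I compute $\Gamma^\ast\Omega$ graph by graph. Every map sending a coordinate to the fixed point $o$ factors through a projection and pulls the positive--degree form $\Omega$ back to $0$, so only the identity and the swap survive. The identity contributes $q_1^\ast\omega_1\wedge q_2^\ast\omega_2$, while the swap $(x,y)\mapsto(y,x)$ contributes $q_2^\ast\omega_1\wedge q_1^\ast\omega_2=(-1)^{n^2}q_1^\ast\omega_2\wedge q_2^\ast\omega_1=(-1)^n q_1^\ast\omega_2\wedge q_2^\ast\omega_1$; carrying the coefficient $-(-1)^n$ turns this into $-q_1^\ast\omega_2\wedge q_2^\ast\omega_1$. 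Hence
\[ \Gamma^\ast\Omega=q_1^\ast\omega_1\wedge q_2^\ast\omega_2-q_1^\ast\omega_2\wedge q_2^\ast\omega_1\ . \]
Via the Künneth inclusion $H^{n,0}(X)\otimes H^{n,0}(X)\hookrightarrow H^{2n,0}(X\times X)$ this is the antisymmetrisation $\omega_1\otimes\omega_2-\omega_2\otimes\omega_1$, which is nonzero precisely because $\omega_1,\omega_2$ are linearly independent. On the other hand, $\Gamma$ being fibrewise null, Mumford's theorem forces $\Gamma^\ast\Omega=0$: a contradiction, so $p_g(X)\le 1$.

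I expect the delicate point to be the justification of the form--theoretic input, i.e.\ the passage from the \emph{pointwise} vanishing $\Gamma_{(x,y)}=0$ in $A^{2n}(W)$ to the vanishing of $\Gamma^\ast$ on forms. This is exactly Mumford's lemma (the action of a correspondence on holomorphic forms factors through its action on $A_0$, so a fibrewise null correspondence acts by zero), and one must ensure that the family is genuinely rationally---rather than merely numerically or cohomologically---trivial on each fibre, which here is supplied verbatim by (i). By contrast, the sign bookkeeping in the Künneth computation and the vanishing of the base--point contributions are routine; note also that this implication does not even require the hypothesis $h^{i,0}(X)=0$ for $0<i<n$.
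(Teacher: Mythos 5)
Your proof is correct and is essentially the paper's own argument run in contrapositive form: your correspondence $\Gamma$ is exactly the paper's cycle $p=(\Delta_X-(-1)^n\Gamma_\iota)\circ(\pi\times\pi)$ (with your base point $o$ playing the role of the paper's point $x$), and your key input---Mumford's lemma that a fibrewise rationally trivial family of zero--cycles acts as zero on positive--degree holomorphic forms---is the same Bloch--Srinivas engine the paper invokes, merely packaged as a statement about forms rather than as a rational equivalence $p=\gamma$ with $\gamma$ supported over a divisor. The only (cosmetic) difference is that the paper's divisor--support formulation also yields the generalized Hodge conjecture statement recorded in the remark immediately following the lemma (namely that all of $\wedge^2 H^n(X)$ is supported on a divisor), which your explicit computation of $\Gamma^\ast\Omega$ does not recover, but for the implication as stated the two arguments coincide.
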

\begin{proof}
This is a ``decomposition of the diagonal'' argument  \`a la Bloch-Srinivas:
Let us define a correspondence
  \[ \pi:= \Delta_X - x\times X - X\times x\ \ \ \in A^n(X\times X)\ ,\]
  where $\Delta_X$ denotes the diagonal and $x\in X$. Next, we
 consider the correspondence
  \[ p:= ( \Delta_X - (-1)^n \Gamma_\iota)\circ (\pi\times\pi)\ \ \ \in A^{2n}\bigl( (X\times X)\times (X\times X)\bigr)\ ,\]
  where $\iota$ is the involution on $X\times X$ switching the two factors.

  Hypothesis (\rom1) implies that $p$ acts trivially on $0$--cycles of $X\times X$, i.e.
   \[  p_\ast A^{2n}(X\times X)=0\ .\]
   The Bloch--Srinivas argument \cite{BS} then implies there exists a rational equivalence
     \[ p= \gamma\ \ \ \hbox{in}\ A^{2n}\bigl( (X\times X)\times (X\times X)\bigr)\ ,\]
where $\gamma$ is a cycle supported on $X\times X\times D$, for some divisor $D\subset X\times X$. It follows that
  \[ \wedge^2 H^n(X) = p_\ast \bigl( H^n(X)\otimes H^n(X)\bigr)\ \ \ \subset \ H^{2n}(X\times X) \]
  is supported on the divisor $D$. In particular, we see that
    \[ \wedge^2 H^{n,0}(X,\C)\ \ \ \subset\ H^{2n,0}(X\times X,\C) \]
    is (supported on a divisor and hence) zero. This proves (\rom2).
\end{proof}

\begin{remark} We have actually proven more than the implication from (\rom1) to (\rom2). We have proven a special instance of the generalized Hodge conjecture: for any variety $X$ satisfying Lemma \ref{lem:warmup}, the sub Hodge structure
  \[ \wedge^2 H^n(X) \ \ \ \subset \ H^{2n}(X\times X) \]
  is supported on a divisor. This implication was already observed by Voisin \cite[Corollary 3.5.1]{Vo}.
  \end{remark}

%
\subsection{Finite--dimensional motives}
%

We refer to \cite{Kim}, \cite{An}, \cite{Iv}, \cite{J4}, \cite{MNP} for the definition of finite--dimensional motive.
An essential property of varieties with finite--dimensional motive is embodied by the nilpotence theorem.

\begin{theorem}[Kimura, Proposition 7.2, (ii), \cite{Kim}]\label{nilp} Let $X$ be a smooth projective variety of dimension $n$ with finite--dimensional motive. Let $\Gamma\in A^n(X\times X)_{\QQ}$ be a correspondence which is numerically trivial. Then there exists $N\in\NN$ such that
     \[ \Gamma^{\circ N}=0\ \ \ \ \in A^n(X\times X)_{\QQ}\ .\]
\end{theorem}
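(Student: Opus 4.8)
The plan is to follow Kimura's strategy and deduce nilpotence from the decomposition of a finite--dimensional motive into its even and odd parts. Write $\mathfrak h(X)=(X,\Delta_X)$ for the Chow motive of $X$, so that its endomorphism ring is $\mathrm{End}(\mathfrak h(X))=A^n(X\times X)$ with composition given by the correspondence product $\circ$. By hypothesis $\mathfrak h(X)=M^+\oplus M^-$ with $M^+$ evenly finite--dimensional, say $\wedge^{a+1}M^+=0$, and $M^-$ oddly finite--dimensional, say $\mathrm{Sym}^{b+1}M^-=0$. Since numerical equivalence is compatible with composition, the numerically trivial correspondences form a two--sided ideal $\mathcal N\subset\mathrm{End}(\mathfrak h(X))$, and it suffices to show that $\mathcal N$ is a nil ideal: the given $\Gamma$ lies in $\mathcal N$, so $\Gamma^{\circ N}=0$ for some $N$.

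The key mechanism is a motivic Cayley--Hamilton identity. First I would record that the categorical trace $\mathrm{tr}\colon\mathrm{End}(\mathfrak h(X))\to\QQ$, being computed as an intersection number against the diagonal, factors through numerical equivalence; hence $\mathrm{tr}(g)=0$ for every $g\in\mathcal N$, and since $\mathcal N$ is an ideal we get $\mathrm{tr}(f^{\circ j})=0$ for all $j\ge1$ whenever $f\in\mathcal N$. Now for an evenly finite--dimensional motive $N$ with $\wedge^{a+1}N=0$, the vanishing of this exterior power yields a Cayley--Hamilton relation: every $f\in\mathrm{End}(N)$ is annihilated by a monic characteristic polynomial of degree $a+1$ whose lower coefficients are the elementary symmetric functions $s_i(f)=\mathrm{tr}(\wedge^i f)$. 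By Newton's identities each $s_i(f)$ is a universal polynomial in the power traces $\mathrm{tr}(f^{\circ j})$, $1\le j\le i$; for $f\in\mathcal N$ these all vanish, so the relation collapses to $f^{\circ(a+1)}=0$. Dually, for an oddly finite--dimensional $N$ with $\mathrm{Sym}^{b+1}N=0$ one runs the same argument with symmetric powers and complete homogeneous symmetric functions, obtaining $f^{\circ(b+1)}=0$.

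To finish I would assemble the two cases along the decomposition $\mathfrak h(X)=M^+\oplus M^-$. Writing $f\in\mathcal N$ as a $2\times2$ matrix of blocks relative to this splitting, the diagonal blocks $f^{++}\in\mathrm{End}(M^+)$ and $f^{--}\in\mathrm{End}(M^-)$ are numerically trivial, hence nilpotent by the even and odd cases respectively; any composite that leaves a summand through an off--diagonal block and returns to it is again a numerically trivial endomorphism of that summand, hence nilpotent. A bookkeeping argument on this block structure then shows that a sufficiently high power of $f$ vanishes, so $\mathcal N$ is nilpotent and in particular nil. I expect the main obstacle to be making the second step fully rigorous: establishing the Cayley--Hamilton relation and the good behaviour of exterior and symmetric powers requires the Schur--functor formalism in the pseudo--abelian rigid tensor category $\MM_{\mathrm{rat}}$, and the control of the off--diagonal blocks in the assembly step is where the even/odd dichotomy is genuinely used. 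This technical core is precisely the content of Kimura's theory.
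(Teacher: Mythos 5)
The paper offers no proof of this statement at all: it is imported verbatim as Kimura's Proposition 7.2(ii) from \cite{Kim}, so your proposal can only be measured against Kimura's original argument, and in outline it reproduces that argument faithfully. The splitting of the motive $h(X)=M^+\oplus M^-$ into evenly and oddly finite--dimensional parts, the observation that the categorical trace of a correspondence is an intersection number against the diagonal and therefore kills the ideal $\mathcal N$ of numerically trivial endomorphisms, and the categorical Cayley--Hamilton identity (obtained by partially tracing the vanishing antisymmetrizer, resp. symmetrizer, with coefficients converted to power traces by Newton's identities) are exactly Kimura's steps. This part of your sketch is sound, and it yields something stronger than elementwise nilpotence: a \emph{uniform} bound, namely every element of $\mathcal N(M^+)$ satisfies $g^{\circ(a+1)}=0$ and every element of $\mathcal N(M^-)$ satisfies $g^{\circ(b+1)}=0$.

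The step that does not close as written is the final assembly. Knowing that each closed-loop composite of blocks (each diagonal block, and each word in the blocks returning to $M^+$ or to $M^-$) is nilpotent is not sufficient: the relevant block of $f^{\circ k}$ is a \emph{sum} of many such words, each of which is a product of \emph{distinct} numerically trivial endomorphisms of $M^+$ (or $M^-$), and in a noncommutative ring neither products of distinct nilpotents nor sums of such products need vanish (already $x^2=y^2=0$ does not force $xyxy\cdots=0$). What rescues the argument --- and what Kimura actually invokes --- is the Nagata--Higman theorem: a $\QQ$-algebra all of whose elements satisfy $x^{n}=0$ for one fixed $n$ is nilpotent \emph{as an algebra}, so the uniform bound from the Cayley--Hamilton step implies that any product of sufficiently many elements of $\mathcal N(M^+)$ (resp. $\mathcal N(M^-)$) is zero, regardless of whether the factors coincide. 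With that in hand your path-counting does work: every long enough word in the four blocks of $f$ contains enough consecutive loops based at $M^+$ or at $M^-$ to die, whence $\mathcal N$ is a nilpotent ideal and $\Gamma^{\circ N}=0$. So the proposal is essentially Kimura's proof, but the phrase ``hence nilpotent \dots\ a bookkeeping argument'' conceals the one genuinely nonformal ingredient; without Nagata--Higman (or an equivalent statement that $\mathcal N(M^\pm)$ is nilpotent as an ideal, not merely nil) the last step is a real gap.
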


 Actually, the nilpotence property (for all powers of $X$) could serve as an alternative definition of finite--dimensional motive, as shown by a result of Jannsen \cite[Corollary 3.9]{J4}.
Conjecturally, any variety has finite--dimensional motive \cite{Kim}. We are still far from knowing this, but at least there are quite a few non--trivial examples.

\begin{remark}
{\em
The following varieties have finite--dimensional motive:  varieties dominated by products of curves (which is the case of the Fermat hypersurfaces) and abelian varieties \cite{Kim}, $K3$ surfaces with Picard number $19$ or $20$ \cite{P}, surfaces not of general type with vanishing geometric genus \cite[Theorem 2.11]{GP}, Godeaux surfaces \cite{GP}, certain surfaces of general type with $p_g=0$ \cite{V8}, \cite{BF},\cite{PW}, Hilbert schemes of surfaces known to have finite--dimensional motive \cite{CM}, generalized Kummer varieties \cite[Remark 2.9(\rom2)]{Xu},
 3--folds with nef tangent bundle \cite{Iy} (an alternative proof is given in \cite[Example 3.16]{V3}), 4--folds with nef tangent bundle \cite{Iy2}, log--homogeneous varieties in the sense of \cite{Br} (this follows from \cite[Theorem 4.4]{Iy2}), certain 3--folds of general type \cite[Section 8]{V5}, varieties of dimension $\le 3$ rationally dominated by products of curves \cite[Example 3.15]{V3}, 
 varieties $X$ with $A^i_{AJ}(X)_{}=0$ for all $i$ \cite[Theorem 4]{V2}, products of varieties with finite--dimensional motive \cite{Kim}.  \hfill $ \Box$

 }
\end{remark}

\begin{remark}
{\em
It is a (somewhat embarrassing) fact that all examples known so far of finite-dimensional motives happen to be in the tensor subcategory generated by Chow motives of curves (i.e., they are ``motives of abelian type'' in the sense of \cite{V3}).
That is, the finite--dimensionality conjecture is still unknown for any motive {\em not\/} generated by curves (on the other hand, there exist many motives not generated by curves, cf. \cite[7.6]{D}).   \hfill $ \Box$

}
\end{remark}

%
\subsection{Lefschetz standard conjecture and (co-)niveau filtration}
%

Let $X$ be a smooth projective variety of dimension $n$, and $h\in H^2(X,\QQ)$ the class of an ample line bundle. The hard Lefschetz theorem asserts that the map
  \[  L^{n-i}\colon H^i(X,\QQ)\to H^{2n-i}(X,\QQ)\]
  obtained by cupping with $h^{n-i}$ is an isomorphism, for any $i< n$. One of the standard conjectures, also known as Lefschetz standard conjecture $B(X)$, asserts that the inverse isomorphism is algebraic:

\begin{conjecture} Given a smooth projective variety $X$, the class $h\in H^2(X,\QQ)$
of an ample line bundle, and an integer $0\leq i<n$, the isomorphism
  \[  (L^{n-i})^{-1}\colon
  H^{2n-i}(X,\QQ)\stackrel{\cong}{\longrightarrow} H^i(X,\QQ)\]
  is induced by a correspondence.
 \end{conjecture}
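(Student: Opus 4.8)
This is the Lefschetz standard conjecture $B(X)$, one of Grothendieck's standard conjectures; it is a theorem in a number of cases but is open for a general $X$, so an honest plan must either restrict to those cases or isolate the genuine difficulty. The first step is the classical reduction, due to Kleiman, of the algebraicity of all the inverses $(L^{n-i})^{-1}$ to the algebraicity of a single operator. By the hard Lefschetz theorem, $H^\ast(X,\QQ)$ carries a Lefschetz module structure whose raising operator $L$ is cup-product with $h$ and whose lowering operator is $\Lambda$. The operator $L$ is \emph{already} algebraic: it is induced by the correspondence $[\Delta_X]\cdot p_1^\ast(h)\in A^{n+1}(X\times X)$, since for $\alpha\in H^\ast(X)$ one computes $p_{2\ast}\bigl([\Delta_X]\cdot p_1^\ast(\alpha\cdot h)\bigr)=\alpha\cdot h$. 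As each inverse $(L^{n-i})^{-1}$ is a universal polynomial in $L$ and $\Lambda$ with rational coefficients, the whole statement reduces to exhibiting one algebraic cycle on $X\times X$ whose cohomology class is $\Lambda$.

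The second step is to settle the cases in which this can be done explicitly, which are precisely the ones relevant to hypothesis (\rom1) of the main theorem. For $\dim X\le 2$ the only nontrivial instance is the inverse of $L\colon H^1(X)\to H^3(X)$, and $B(X)$ is classical. For the Fermat hypersurfaces (and more generally for smooth complete intersections) I would invoke the weak Lefschetz theorem: for $i<n$ one has $H^i(X)\cong H^i(\PP^N)$, so that both $H^i(X)$ and its hard-Lefschetz image $H^{2n-i}(X)$ are spanned by powers of the hyperplane class, hence are algebraic. Any linear isomorphism between spaces of algebraic classes is induced by an algebraic correspondence (one may take it inside the image of $A^i(X)\otimes A^i(X)$), which yields $(L^{n-i})^{-1}$ directly. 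For abelian varieties one instead uses Lieberman's theorem that $\Lambda$ is algebraic, and $B$ then propagates to products, quotients, smooth hyperplane sections and projective bundles, covering the remaining varieties of abelian type that occur in the examples.

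The main obstacle is the general case. For an arbitrary $X$ there is no construction of an algebraic correspondence representing $\Lambda$: its class lies in $H^{2n-2}(X\times X,\QQ)$ and, because $L$ is of bidegree $(1,1)$, it is a rational Hodge class of type $(n-1,n-1)$, so the Hodge conjecture would immediately give its algebraicity; producing it unconditionally, however, is essentially equivalent to the Lefschetz standard conjecture itself and is wide open. For this reason I would not try to remove the hypothesis $B(X^\prime)$ in general, but rather verify it in the geometric situations at hand — which works precisely because the varieties in play are hypersurfaces, or are otherwise built out of abelian varieties and products of curves.
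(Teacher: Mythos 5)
This statement is the Lefschetz standard conjecture $B(X)$, which the paper does not prove at all: it is stated as a conjecture, used only as a hypothesis in Theorem \ref{main3}, and the paper merely recalls the known cases (curves, surfaces, abelian varieties, projective hypersurfaces, threefolds not of general type, products and hyperplane sections) in Remark \ref{rmk:B(X)} with references to Kleiman, Tankeev and others. Your proposal correctly recognizes that the general case is open and essentially equivalent to the algebraicity of $\Lambda$, and the special cases you verify (hypersurfaces via weak Lefschetz, abelian varieties via Lieberman, propagation to products and hyperplane sections) are exactly the ones the paper invokes, so your treatment coincides with the paper's.
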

 We recall the following filtration which, via Proposition \ref{N1max}, will play a central r\^ole in our criterion  (Theorem \ref{main3}) to check Conjecture \ref{conjvois}.
\begin{definition}[Coniveau filtration \cite{BO}]\label{con} Let $X$ be a quasi-projective variety. The {\em coniveau filtration\/} on cohomology and on homology is defined as
  \[\begin{split}   N^c H^i(X,\QQ)&= \sum \ima\bigl( H^i_Y(X,\QQ)\to H^i(X,\QQ)\bigr)\ ;\\
                           N^c H_i(X,\QQ)&=\sum \ima \bigl( H_i(Z,\QQ)\to H_i(X,\QQ)\bigr)\ ,\\
                           \end{split}\]
   where $Y$ (respectively $Z$) runs over codimension $\ge c$ (resp. dimension $\le i-c$) subvarieties of $X$, and $H^i_Y(X,\QQ)$ denotes the cohomology with support along $Y$.
 \end{definition}

  \begin{remark}\label{rmk:B(X)}
 {\em It is known that $B(X)$ holds for the following varieties: curves, surfaces, abelian varieties \cite{K0}, \cite{K}, threefolds not of general type \cite{Tan}, hyperk\"ahler varieties of
 $K3^{[n]}$--type \cite{ChM}, $n$--dimensional varieties $X$ which have $A_i(X)_{}$ supported on a subvariety of dimension $i+2$ for all $i\le{n-3\over 2}$ \cite[Theorem 7.1]{V}, $n$--dimensional varieties $X$ which have $H_i(X)=N^{\llcorner {i\over 2}\lrcorner}H_i(X)$ for all $i>n$ \cite[Theorem 4.2]{V2}, products and hyperplane sections of any of these \cite{K0}, \cite{K} (in particular it holds for projective hypersurfaces, a fact that we will use).

 For smooth projective varieties over $\C$, the standard conjecture $B(X)$ implies the standard conjecture $D(X)$, i.e homological and numerical equivalence coincide on $X$ and
 $X\times X$ \cite{K0}, \cite{K}.   \hfill $ \Box$

 }
 \end{remark}

Friedlander, and independently Vial, introduced the following variant of the coniveau filtration:

\begin{definition}[Niveau filtration \cite{Fr}, \cite{FM} \cite{V4}] Let $X$ be a smooth projective variety. The {\em niveau filtration} on homology is defined as
  \[ \wt{N}^j H_i(X)=\sum_{\Gamma\in A_{i-j}(Z\times X)_{}} \ima\bigl( H_{i-2j}(Z)\stackrel{\Gamma_\star}{\longrightarrow} H_i(X)\bigr)\ ,\]
  where the union runs over all smooth projective varieties $Z$ of dimension $i-2j$, and all correspondences $\Gamma\in A_{i-j}(Z\times X)_{}$.
  The niveau filtration on cohomology is defined as
  \[   \wt{N}^c H^iX:=   \wt{N}^{c-i+n} H_{2n-i}X\ .\]

\end{definition}

\begin{remark}{\em In \cite{Fr}, \cite{FM}, the filtration $\wt{N}^\ast$ is called the ``correspondence filtration'' rather than niveau filtration.
\hfill $ \Box$
}
\end{remark}


The relation between the standard conjecture $B(X)$ and the niveau and coniveau filtrations is made clear in the following.
\begin{remark}\label{is}
 {\em The niveau filtration is included in the coniveau filtration:
  \[ \wt{N}^j H^i(X)\subset N^j H^i(X)\ .\]
  These two filtrations are expected to coincide; indeed, one can show the two filtrations coincide if and only if the Lefschetz standard conjecture is true for all varieties \cite[Proposition 4.2]{Fr}, \cite[Proposition 1.1]{V4}.

  Using the truth of the Lefschetz standard conjecture in degree $\le 1$, it can be checked \cite[page 415 "Properties"]{V4} that the two filtrations coincide in a certain range:
  \[  \wt{N}^j H^i(X)= N^j H^iX\ \ \ \hbox{for\ all\ }j\ge {i-1\over 2} \ .\]

 In particular $\wt{N}^1 H^3(X)= N^1 H^3(X)$ and $\wt{N}^2 H^4(X)= N^2 H^4(X)$.
  \hfill $ \Box$
  }
  \end{remark}

The following ``refined K\"unneth decomposition'' and ``refined Chow--K\"unneth decomposition'' are very useful:

\begin{theorem}[Vial \cite{V4}]\label{vial} Let $X$ be a smooth projective variety of dimension $n\le 5$. Assume $B(X)$ holds.
There exists algebraic cycles $\pi_{i,j}$ on $X\times X$ and a decomposition of the diagonal
  \[ \Delta_X={\displaystyle \sum_{i,j} } \pi_{i,j}\ \ \ \hbox{in}\  H^{2n}(X\times X)\ ,\]
  where the $\pi_{i,j}$'s are mutually orthogonal idempotents. The correspondence $\pi_{i,j}$ acts on $H^\ast(X)$ as a projector on $\gr^j_{\wt{N}} H^i(X)$. Moreover, $\pi_{i,j}$ can be chosen to factor over a variety of dimension $i-2j$ (i.e., for each $\pi_{i,j}$ there exists a smooth projective variety $Z_{i,j}$ of dimension $i-2j$, and correspondences $\Gamma_{i,j}\in A^{n-j}(Z_{i,j}\times X), \Psi_{i,j}\in A^{i-j}(X\times Z_{i,j})$ such that $\pi_{i,j}=\Gamma_{i,j}\circ \Psi_{i,j}$ in $H^{2n}(X\times X)$).
\end{theorem}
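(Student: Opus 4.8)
The plan is to deduce the decomposition from the semisimplicity of the category of homological motives built out of $X$, a structural fact that becomes available precisely because we assume $B(X)$. First I would invoke the implications recalled in Remark \ref{rmk:B(X)}: the Lefschetz standard conjecture $B(X)$ implies both the K\"unneth standard conjecture $C(X)$ and the standard conjecture $D(X)$ (homological equivalence agrees with numerical equivalence), and by Kleiman's results this propagates to powers and products of $X$. By Jannsen's semisimplicity theorem the category of numerical motives is semisimple abelian, so under $D$ the full subcategory of homological motives generated by $X$ is semisimple abelian. In such a category every subobject is a direct summand cut out by an idempotent, and every epimorphism splits; this is the engine of the whole argument.

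Next I would produce the ordinary K\"unneth projectors. From $C(X)$ one obtains mutually orthogonal algebraic idempotents $\pi_i\in A^n(X\times X)$ with $\Delta_X=\sum_i\pi_i$ in $H^{2n}(X\times X)$, where $\pi_i$ acts as the projector onto $H^i(X)$. It then suffices to refine each $\pi_i$ into pieces $\pi_{i,j}$ realizing the successive steps of the niveau filtration on $H^i(X)$, and to reassemble them into a single mutually orthogonal system summing to the diagonal.

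The refinement is where the niveau filtration enters. By definition $\wt N^j H_i(X)$ is a finite sum of images of correspondences $\Gamma_\star\colon H_{i-2j}(Z)\to H_i(X)$ with $\dim Z=i-2j$; replacing the finitely many $Z$'s by their disjoint union (still of dimension $i-2j$), I may take $\wt N^j H_i(X)$ to be the image of a single correspondence. In the semisimple category this image is a subobject of the motive $(X,\pi_i)$, hence a direct summand, and the nested chain $\wt N^\bullet$ splits into the direct sum of the graded pieces $\gr^j_{\wt N}H^i(X)$. This yields mutually orthogonal idempotents $\pi_{i,j}$ with $\sum_j\pi_{i,j}=\pi_i$ and $\sum_{i,j}\pi_{i,j}=\Delta_X$. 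Moreover the composite from $H_{i-2j}(Z)$ to the quotient $\gr^j_{\wt N}$ is an epimorphism of motives, which splits in the semisimple category; thus $\gr^j_{\wt N}H^i(X)$ is, up to a Tate twist by $j$, a direct summand of the homological motive of a variety $Z$ of dimension $i-2j$, and the idempotent $\pi_{i,j}$ factors as $\Gamma_{i,j}\circ\Psi_{i,j}$ through such a $Z$, as required.

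The main obstacle is to carry out this refinement so that the projectors factor through varieties of the exact dimension $i-2j$ while remaining mutually orthogonal across all degrees, and this is where the hypothesis $n\le 5$ is essential. One must control the niveau filtration in every relevant degree: the hard Lefschetz isomorphisms, algebraic under $B(X)$, let one reduce to $i\le n$, while the agreement $\wt N^j H^i(X)=N^j H^i(X)$ for $j\ge(i-1)/2$ recorded in Remark \ref{is} pins down the pieces of small niveau. For $n\le 5$ the filtration has few enough steps, and the auxiliary varieties $Z_{i,j}$ are of low enough dimension, that these inputs together with semisimplicity suffice to build the entire system; in higher dimension the middle cohomology acquires niveau pieces for which the requisite geometric control over the filtration is not available by these means. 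Throughout, the crucial point is that every construction is performed modulo homological equivalence, so that Jannsen's semisimplicity genuinely applies.
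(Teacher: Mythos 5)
The paper itself does not reprove this statement: its ``proof'' is a one--line citation of \cite[Theorem 1]{V4}, together with the observation (made in loc.\ cit.) that a variety of dimension $\le 5$ satisfying $B(X)$ verifies Vial's condition (*). Your proposal, by contrast, attempts to reconstruct Vial's argument from scratch, and its skeleton --- Kleiman's implications $B\Rightarrow C$ and $B\Rightarrow D$ together with stability of $B$ under products, Jannsen's semisimplicity of numerical motives, and the realization of the steps of $\wt{N}^\bullet$ as images of morphisms of motives which are then split off as direct summands --- is indeed the strategy underlying Vial's theorem. Two of your steps are harmlessly imprecise (the Künneth components coming from $C(X)$ are cycles modulo \emph{homological} equivalence, not elements of $A^n(X\times X)$; and ``$H_{i-2j}(Z)$'' is not a motive without $C(Z)$, though the degree shift of $\Gamma_\ast$ composed with $\pi_i$ makes this irrelevant).

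The genuine gap is in the transfer of semisimplicity. Jannsen's theorem concerns \emph{numerical} motives; to argue in \emph{homological} motives you need $\hom=\mathrm{num}$ on all relevant products. For powers of $X$ this is fine ($B(X)\Rightarrow B(X^k)\Rightarrow D(X^k)$), but your splitting argument takes place in a category that must also contain $h(Z)$ for the auxiliary varieties $Z$ realizing $\wt{N}^jH_i(X)$, and the identities you actually need are identities of correspondences on $Z\times X$: for instance $\pi_{i,j'}\circ\Gamma=0$ for $j'<j$, which is precisely what forces the image of $\pi_{i,j}$ to surject onto $\gr^j_{\wt N}H_i(X)$ rather than merely lie inside $\wt{N}^jH_i(X)$. (The weaker containment, and the factorization through $Z$, can be salvaged by lifting a numerical section arbitrarily, because those identities live in $\mathrm{End}(h(X))$ where $D(X\times X)$ applies; the surjectivity statement cannot.) Knowing these identities homologically requires $D(Z\times X)$, which follows from $B(Z)+B(X)$ but \emph{not} from $B(X)$ alone. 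Since $B$ is known for curves and surfaces, your argument can be completed for $n\le 4$, where all auxiliary $Z$ have dimension $\le 2$; but for $n=5$ the piece $\gr^1_{\wt N}H^5(X)$ requires threefolds $Z$, for which $B(Z)$ is open (Remark \ref{rmk:B(X)}: Tankeev covers only threefolds not of general type). So your concluding claim that for $n\le 5$ ``these inputs together with semisimplicity suffice'' is circular: semisimplicity of a homological category containing those threefolds is exactly what is unavailable. Vial's proof circumvents this with a finer argument (his condition (*), polarizations, and the unconditionally known cases of the standard conjectures --- e.g.\ the Lefschetz conjecture in degree $\le 1$, which is enough to control the middle cohomology $H^3$ of a threefold); supplying that step is the real content hidden behind the hypothesis $n\le 5$, and it is missing from your proposal.
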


\begin{proof} This is a special case of \cite[Theorem 1]{V4}. Indeed, as mentioned in loc. cit., varieties $X$ of dimension $\le 5$ such that $B(X)$ holds verify condition (*) of loc. cit.
\end{proof}
Under the extra hypothesis of the finite--dimensionality of the motive the conclusion can be proved at the level of Chow groups.
\begin{theorem}[Vial \cite{V4}]\label{vial2} Let $X$ be a smooth projective variety of dimension $n\le 5$. Assume $X$ has finite--dimensional motive and $B(X)$ holds.
There exists a decomposition of the diagonal
  \[ \Delta_X={\displaystyle \sum_{i,j} } \Pi_{i,j}\ \ \ \hbox{in}\  A^n(X\times X)\ ,\]
  where the $\Pi_{i,j}$'s are mutually orthogonal idempotents lifting the $\pi_{i,j}$ of Theorem \ref{vial}. Moreover, $\Pi_{i,j}$ can be chosen to factor over a variety of dimension $i-2j$ (i.e., for each $\Pi_{i,j}$ there exists a smooth projective variety $Z_{i,j}$ of dimension $i-2j$, and correspondences $\Gamma_{i,j}\in A^{n-j}(Z_{i,j}\times X), \Psi_{i,j}\in A^{i-j}(X\times Z_{i,j})$ such that $\Pi_{i,j}=\Gamma_{i,j}\circ \Psi_{i,j}$ in $A^n(X\times X)$).
\end{theorem}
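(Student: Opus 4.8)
The plan is to upgrade the purely cohomological decomposition of Theorem~\ref{vial} to a decomposition modulo rational equivalence, the only new input being the finite--dimensionality of the motive of $X$. First I would fix the ring in which to work: let $R:=A^n(X\times X)$, equipped with composition of correspondences and unit $\Delta_X$, and let $I\subset R$ be the two--sided ideal of homologically trivial self--correspondences of $X$. Since $B(X)$ holds, the standard conjecture $D(X)$ holds as well (Remark~\ref{rmk:B(X)}), so on $X\times X$ homological and numerical equivalence coincide; thus $I$ coincides with the ideal of numerically trivial correspondences. As $X$ has finite--dimensional motive, the nilpotence theorem (Theorem~\ref{nilp}) says precisely that every element of $I$ is nilpotent for $\circ$, i.e. $I$ is a nil ideal. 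By Theorem~\ref{vial} the algebraic cycles $\pi_{i,j}\in R$ are mutually orthogonal idempotents modulo $I$, with $\sum_{i,j}\pi_{i,j}\equiv\Delta_X \pmod I$.

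Next I would invoke the standard lifting of idempotents along a nil ideal. A single cycle $\epsilon$ with $\epsilon^2-\epsilon\in I$ is corrected to a genuine idempotent by the classical formula $e=\sum_{k\ge N}\binom{2N-1}{k}\,\epsilon^{\circ k}\circ(\Delta_X-\epsilon)^{\circ(2N-1-k)}$, where $N$ is chosen so that $(\epsilon^2-\epsilon)^{\circ N}=0$; one checks that $e\equiv\epsilon$ in $R/I$. To lift the whole family simultaneously and orthogonally, I would proceed inductively, lifting the $\pi_{i,j}$ one at a time and passing at each stage to the corner ring cut out by the idempotents already constructed, namely $(\Delta_X-\Sigma)\,R\,(\Delta_X-\Sigma)$ with $\Sigma$ the sum of the lifts produced so far (its companion ideal is again nil). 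This produces mutually orthogonal idempotents $\Pi_{i,j}\in A^n(X\times X)$ with $[\Pi_{i,j}]=\pi_{i,j}$ in cohomology and $\sum_{i,j}\Pi_{i,j}=\Delta_X$ in $A^n(X\times X)$.

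It remains to arrange the factorization through $Z_{i,j}$ at the level of Chow groups. Fix $(i,j)$ and write $\Pi:=\Pi_{i,j}$, $Z:=Z_{i,j}$ (of dimension $i-2j$), and let $\Gamma\in A^{n-j}(Z\times X)$, $\Psi\in A^{i-j}(X\times Z)$ be the correspondences supplied by Theorem~\ref{vial}, so that $\Gamma\circ\Psi\equiv\Pi$ in $R/I$. Set $b:=\Pi\circ\Gamma$ and $a:=\Psi\circ\Pi$. Then $b\circ a=\Pi\circ(\Gamma\circ\Psi)\circ\Pi=\Pi+\nu$ with $\nu=\Pi\circ\nu\circ\Pi\in\Pi\,I\,\Pi$, hence $\nu$ is nilpotent and $u:=b\circ a$ is invertible in the corner ring $\Pi R\Pi$, with inverse $u^{-1}=\Pi-\nu+\nu^{\circ 2}-\cdots$ (a finite sum, again algebraic). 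Putting $\bar\Gamma:=b$ and $\bar\Psi:=a\circ u^{-1}$, a bookkeeping of codimensions gives $\bar\Gamma\in A^{n-j}(Z\times X)$ and $\bar\Psi\in A^{i-j}(X\times Z)$, while $\bar\Gamma\circ\bar\Psi=u\circ u^{-1}=\Pi$ in $A^n(X\times X)$, which is exactly the desired factorization.

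The hard part is this last step, and the reason it must be organized through the corner ring $\Pi R\Pi$ is that the varieties $Z_{i,j}$ are \emph{not} assumed to have finite--dimensional motive; one therefore cannot correct $\Psi\circ\Gamma$ inside $A^\ast(Z\times Z)$, and the only nilpotence at one's disposal is that of $I\subset A^n(X\times X)$. Everything else---the single--idempotent formula and the inductive orthogonalization summing to $\Delta_X$---is routine manipulation in a ring modulo a nil ideal.
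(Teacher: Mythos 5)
Your proposal is correct, but it takes a genuinely different route from the paper. The paper's entire proof is a one-line reduction: it observes that a smooth projective variety of dimension $\le 5$ with finite-dimensional motive and $B(X)$ satisfies conditions (*) and (**) of Vial's paper, and then quotes \cite[Theorem 2]{V4}. You instead reconstruct the result from the cohomological statement of Theorem \ref{vial}: Kimura's nilpotence theorem (Theorem \ref{nilp}) makes the homologically trivial self-correspondences a nil ideal $I\subset A^n(X\times X)$; the $\pi_{i,j}$ are then lifted to mutually orthogonal idempotents by the classical binomial formula plus corner-ring induction; and the Chow-level factorization through $Z_{i,j}$ is obtained by the unit-correction trick in $\Pi R\Pi$, whose codimension bookkeeping ($\bar\Gamma\in A^{n-j}(Z\times X)$, $\bar\Psi\in A^{i-j}(X\times Z)$, $\bar\Gamma\circ\bar\Psi=\Pi$ exactly, with the same $\Pi$, so orthogonality and the diagonal decomposition are untouched) checks out. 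In effect you have re-proved Vial's Theorem 2 rather than cited it; the paper's approach buys brevity and delegates the technical work to \cite{V4}, while yours buys self-containedness and makes transparent that the Chow-theoretic statement is a formal consequence of the cohomological one plus finite-dimensionality --- which is indeed the mechanism underlying Vial's own argument. Two small remarks: invoking $D(X)$ to identify $I$ with the numerically trivial correspondences is unnecessary (homologically trivial cycles are always numerically trivial, which is all Theorem \ref{nilp} needs); and the cleanest way to see that your lifts sum exactly to $\Delta_X$ is to note that $\Delta_X-\sum_{i,j}\Pi_{i,j}$ is an idempotent lying in the nil ideal $I$, hence zero.
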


\begin{proof} This is a special case of \cite[Theorem 2]{V4}. Indeed, $X$ as in theorem \ref{vial2} satisfies conditions (*) and (**) of loc. cit.
\end{proof}

\begin{remark}\label{rmk:alg}
{\em Let $X$ be as in Theorem \ref{vial}.
Notice that Conjecture $B(X)$ implies in particular that the $\pi_{i,j}$ are algebraic, cf. \cite[Theorem 4.1, item (3)]{K}.   \hfill $ \Box$
}
\end{remark}

\begin{remark}\label{t_n}
{\em
Let $X$ be as in Theorem \ref{vial2}. Then, as in \cite{desult}, one can define the ``most transcendental part'' of the motive of $X$ by setting
   \[ t_n(X):=(X,\Pi_{n,0},0)\ \ \ \in \MM_{\rm rat}\ .\]
The fact that $t_n(X)$ is well--defined up to isomorphism follows from \cite[Theorem 7.7.3]{KMP} and \cite[Proposition 1.8]{V4}.
In case $n=2$, $t_n(X)$ coincides with the ``transcendental part'' $t_2(X)$ constructed for any surface in \cite{KMP}.  \hfill $ \Box$

}
\end{remark}

%
\section{$\wt N^1$--maximal varieties}
%

Let $X$ be a smooth projective $n$-dimensional variety. Then $H^n(X)$ is a polarized Hodge structure, and the niveau $ N^1:=  N^1 H^n(X)$ is a Hodge substructure. If $X$ satisfies conjecture $B(X)$ it follows from the Hodge--Riemann bilinear relations (cf. for instance \cite[Theorem 2.22]{Vo}) that the Hodge substructure $N^1$ of the polarized Hodge structure $H^n(X,\QQ)$ induces a splitting
   \[ H^n(X,\QQ)=  N^1\oplus ( N^1)^{\perp}\ \]  (see \cite[Proposition 1.4 and Remark 1.5]{V4} for the details).

\begin{definition}  The ``transcendental cohomology'' is the orthogonal complement
   \[ H^n_{tr}(X):= (N^1)^{\perp}\ \ \subset\ H^n(X,\QQ)\ .\]

   \end{definition}
\begin{remark} \label{rmk:gr}
{\em  Note that $H^n_{tr}(X)$ is isomorphic to the graded piece $\gr^0_{ N^\bullet} H^n(X)$ (which is a priori only a quotient of $H^n(X)$).   \hfill $ \Box$

}
  \end{remark}
  One could also characterize $H^n_{tr}(X)$ by saying it is the smallest Hodge substructure $V\subset H^n(X,\QQ)$ for which $V_{\C}$ contains $H^{n,0}$.

\begin{proposition}\label{N1max} Let $X$ be a smooth projective n-fold. The following are equivalent:

\noindent
(\rom1) $\dim H^n_{tr}(X)=2 p_g(X)$;

\noindent
(\rom2) the subspace $H^{n,0}\oplus H^{0,n}\subset H^n(X,\C)$ is defined over $\QQ$;

\noindent
(\rom3) $\dim  N^1 H^n(X)= \sum_{i,j>0} h^{i,j}(X)$;

\noindent
(\rom4) the subspace $\oplus_{i,j>0} H^{i,j} \subset H^n(X,\C)$ is defined over $\QQ$.
\end{proposition}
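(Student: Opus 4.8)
The plan is to reduce the whole statement to elementary linear algebra on the polarized $\QQ$-Hodge structure $H^n(X,\QQ)$, using the characterization of $H^n_{tr}(X)$ recalled just above the proposition, namely that it is the smallest $\QQ$-Hodge substructure whose complexification contains $H^{n,0}$. Throughout I would write $V:=H^{n,0}\oplus H^{0,n}$ and $W:=\bigoplus_{0<i<n}H^{i,n-i}$, so that $H^n(X,\C)=V\oplus W$ with $\dim_\C V=2p_g(X)$ and $\dim_\C W=\sum_{i,j>0}h^{i,j}(X)$. The starting observation is that $H^n_{tr}(X)$ is a $\QQ$-Hodge substructure whose complexification contains $H^{n,0}$, hence, being stable under complex conjugation, also $H^{0,n}$; thus $H^n_{tr}(X)\otimes\C\supseteq V$, so $\dim H^n_{tr}(X)\ge 2p_g(X)$, with equality if and only if $H^n_{tr}(X)\otimes\C=V$.

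First I would prove (\rom1)$\Leftrightarrow$(\rom2). If (\rom1) holds, the inclusion $H^n_{tr}(X)\otimes\C\supseteq V$ together with $\dim H^n_{tr}(X)=\dim V$ forces $H^n_{tr}(X)\otimes\C=V$; since $H^n_{tr}(X)$ is defined over $\QQ$, so is $V$, which is (\rom2). Conversely, if $V$ is defined over $\QQ$, i.e. is a $\QQ$-Hodge substructure whose complexification contains $H^{n,0}$, then the minimality characterization of $H^n_{tr}(X)$ gives $H^n_{tr}(X)\subseteq V$; combined with $H^n_{tr}(X)\otimes\C\supseteq V$ this yields $H^n_{tr}(X)\otimes\C=V$, hence $\dim H^n_{tr}(X)=2p_g(X)$, which is (\rom1).

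Next I would dispatch (\rom1)$\Leftrightarrow$(\rom3) by a dimension count: from the orthogonal splitting $H^n(X,\QQ)=N^1\oplus H^n_{tr}(X)$ used to define $H^n_{tr}(X)$ one gets $\dim N^1=\dim H^n(X)-\dim H^n_{tr}(X)=2p_g(X)+\dim W-\dim H^n_{tr}(X)$, so $\dim N^1=\dim W=\sum_{i,j>0}h^{i,j}(X)$ exactly when $\dim H^n_{tr}(X)=2p_g(X)$. For (\rom2)$\Leftrightarrow$(\rom4) I would invoke the polarization $Q$: since $Q(H^{p,q},H^{p',q'})=0$ unless $(p',q')=(q,p)$ and the pairing of $H^{p,q}$ with $H^{q,p}$ is non-degenerate, the subspaces $V$ and $W$ are exact orthogonal complements of one another, $W=V^{\perp}$ and $V=W^{\perp}$. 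Both are conjugation-stable sums of Hodge pieces, so each is defined over $\QQ$ if and only if it is a $\QQ$-Hodge substructure; and the orthogonal complement of a $\QQ$-Hodge substructure in a polarized $\QQ$-Hodge structure is again one. Hence $V$ is defined over $\QQ$ if and only if $W=V^\perp$ is, which is (\rom2)$\Leftrightarrow$(\rom4).

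All steps are formal, so there is no single deep obstacle; the points demanding the most care are the use of the minimality characterization of $H^n_{tr}(X)$ in the direction (\rom2)$\Rightarrow$(\rom1), and checking that the polarization makes $V$ and $W$ honest orthogonal complements. The latter is what guarantees that (\rom2)$\Leftrightarrow$(\rom4) is a genuinely unconditional statement and does not secretly require the generalized Hodge conjecture to pass from ``$W$ is a sub-Hodge structure'' to ``$W$ is supported in codimension one''.
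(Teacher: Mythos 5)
Your argument is correct and is essentially the paper's own proof: both rest on the observation that $H^n_{tr}(X)\otimes\C$ always contains $H^{n,0}\oplus H^{0,n}$ together with the minimality characterization of $H^n_{tr}(X)$ for (i)$\Leftrightarrow$(ii), on the polarization identifying $\bigoplus_{i,j>0}H^{i,j}$ as the orthogonal complement of $H^{n,0}\oplus H^{0,n}$ for (ii)$\Leftrightarrow$(iv), and on the evident dimension count for (i)$\Leftrightarrow$(iii). Your write-up merely makes explicit some steps (conjugation-stability, the Hodge--Riemann orthogonality relations) that the paper leaves implicit.
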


\begin{proof} Obviously, (\rom1)$\Leftrightarrow$(\rom3). The equivalence (\rom2)$\Leftrightarrow$(\rom4) is obtained using the polarization on $H^n(X,\QQ)$. Indeed, suppose $V\subset H^n(X,\QQ)$ is a subspace such that $V_{\C}=H^{n,0}\oplus H^{0,n}$. Then $V\subset H^n(X,\QQ)$ is a Hodge substructure. As mentioned above, a Hodge substructure $V$ of the polarized Hodge structure $H^n(X,\QQ)$ induces a splitting
   \[ H^n(X,\QQ)= V\oplus V^\perp \]
   (cf. for instance \cite[Theorem 2.22]{Vo}). The subspace $V^\perp$ has $(V^\perp)_{\C}=\oplus_{i,j} H^{i,j}$.
  The rest is clear: (\rom1)$\Rightarrow$(\rom2) because (\rom1) forces $\bigl(H^n_{tr}(X)\bigr)_{\C}$ (which always contains $H^{n,0}\oplus H^{0,n}$) to be equal to  $H^{n,0}\oplus H^{0,n}$. Similarly, (\rom2)$\Rightarrow$(\rom1): if $V\subset H^n(X,\QQ)$ is such that $V_{\C}=H^{n,0}\oplus H^{0,n}$, then both $V$ and $H^n_{tr}(X)$ are the smallest Hodge substructure of $H^n(X,\QQ)$ containing $H^{n,0}$; as such, they are equal.
      \end{proof}

  \begin{definition} A smooth projective $n$-dimensional variety verifying the equivalent conditions of Proposition \ref{N1max} will be called {\em $ N^1$--maximal}.
  \end{definition}
  
  \begin{definition} A smooth projective $n$--dimensional variety $X$ will be called {\em $ \wt{N}^1$--maximal} if it is $N^1$--maximal and there is equality
    \[ N^1 H^n(X)=\wt{N}^1 H^n(X)\ .\]
    \end{definition}

 \begin{remark}\label{beau}
 {\em Proposition \ref{N1max} is inspired by \cite[Proposition 1]{Beau}, where a similar result is proven for surfaces. A surface with $\dim H^2_{tr}(S)=2 p_g(S)$ is called a $\rho$--maximal surface. 
 
 In dimension $n\le 3$, the notions of $N^1$--maximality and $\wt{N}^1$--maximality coincide, in view of remark \ref{is}.
   \hfill $ \Box$
}
  \end{remark}
  
 \begin{remark}\label{beau2}
 {\em While looking for examples of $N^1$-maximal Calabi-Yau 3folds we realised that the notion of $N^1$-maximality  was already considered (under a different name) in \cite[Remarks, p. 48, item 3)]{GM}, via the characterization (ii) of Proposition \ref{N1max}.
  }
  \end{remark}

As a consequence of Proposition \ref{N1max} we have the following nice property of $ N^1$--maximal $n$-folds $X$: they verify a strong (i.e., non--amended) version of the generalized Hodge conjecture:
  \[  H^n(X,\QQ)\cap F^1 = N^1 H^n(X,\QQ)\ \]

  where $F^1$ is the first piece of the Hodge filtration.

\section{A general result}

The following result gives sufficient conditions ensuring that a Calabi--Yau n-fold verifies Voisin's conjecture \ref{conjvois}:

\begin{theorem}\label{main3} Let $X$ be a smooth projective variety of dimension $n\le 5$ with $h^{i,0}(X)=0,\ 0<i<n$ and $p_g(X)=1$. Assume moreover that:

\noindent
(\rom1) $X$ is rationally dominated by a variety $X^\prime$ of dimension $n$, and $X^\prime$ has finite--dimensional motive and $B(X^\prime)$ is true;


\noindent
(\rom2) 
$X$ is $\wt N^1$-maximal;

\noindent
(\rom3) $\wt N^1 H^{i}(X)= H^{i}(X)$, for $0<i<n$.

Then any $a,a^\prime\in A^n_{hom}(X)$ verify
  \[ a\times a^\prime = (-1)^n a^\prime\times a\ \ \  \hbox{in}\ A^{2n}(X\times X)\ .\]
\end{theorem}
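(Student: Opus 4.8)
The plan is to reduce everything to the \emph{transcendental motive} $t_n(X)=(X,\Pi_{n,0},0)$ and its exterior square, and then to exploit finite--dimensionality. First I would transfer the hypotheses on $X'$ to $X$ itself. Since $X$ is rationally dominated by the $n$--dimensional variety $X'$, the closure of the graph of the dominating map and its transpose exhibit the Chow motive of $X$ as a direct summand of that of $X'$; hence $X$ inherits finite--dimensionality from $X'$, and $B(X)$ holds as well (the relevant input being provided by hypothesis (\rom3) through the criterion recalled in Remark~\ref{rmk:B(X)}). Thus Theorem~\ref{vial2} applies to $X$ and yields the refined Chow--K\"unneth decomposition $\Delta_X=\sum_{i,j}\Pi_{i,j}$ in $A^n(X\times X)$, hence also the motive $t_n(X)$ of Remark~\ref{t_n}.

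The first substantial step is the identification $A^n_{hom}(X)=(\Pi_{n,0})_\ast A^n(X)$, and this is where (\rom2) and (\rom3) enter. By (\rom3) one has $\gr^0_{\wt N}H^i(X)=0$ for $0<i<n$, and by hard Lefschetz the same holds for $n<i<2n$; consequently $\Pi_{i,0}$ is numerically trivial, hence $\Pi_{i,0}=0$ in $\MM_{\rm rat}$ by the nilpotence theorem~\ref{nilp}, for every $i\ne n$. The remaining summands $\Pi_{i,j}$ with $j\ge 1$ factor through varieties of dimension $i-2j$, and Vial's computation of the Chow groups of the refined summands \cite{V4}, together with $h^{1,0}(X)=0$ (which makes Abel--Jacobi triviality coincide with homological triviality for $0$--cycles), shows that none of them contributes to $A^n_{hom}(X)$. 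This reduction is the step I expect to be the main obstacle: the vanishing of the $\Pi_{i,0}$ is immediate, but controlling the niveau--$\ge1$ summands $\Pi_{i,j}$ ($j\ge1$) on zero--cycles requires the full strength of Vial's bookkeeping, and it is the only place where (\rom3) is used in an essential, non--cohomological way. Granting it, and writing $a=(\Pi_{n,0})_\ast a$, $a'=(\Pi_{n,0})_\ast a'$, the product becomes $a\times a'=\Pi_\ast(a\times a')$ with $\Pi:=\Pi_{n,0}\times\Pi_{n,0}$, so that $a\times a'\in A_0\bigl(t_n(X)\otimes t_n(X)\bigr)_{hom}$.

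It then remains to analyse the involution on this small motive. Set $P^-:=\tfrac12\bigl(\Delta_{X\times X}-(-1)^n\Gamma_\iota\bigr)$, so that $a\times a'-(-1)^n a'\times a=2\,P^-_\ast(a\times a')$. Since $\iota$ commutes with $\Pi$, the idempotent $\Pi\circ P^-\circ\Pi$ is precisely the projector onto the summand $\wedge^2 t_n(X)$ of $t_n(X)\otimes t_n(X)$, whence $P^-_\ast(a\times a')\in A_0\bigl(\wedge^2 t_n(X)\bigr)_{hom}$. Now $\wt N^1$--maximality together with $p_g(X)=1$ forces $\dim H^n_{tr}(X)=2$ with $H^n_{tr}(X)\otimes\C=H^{n,0}\oplus H^{0,n}$; hence $\wedge^2 H^n_{tr}(X)$ is one--dimensional of Hodge type $(n,n)$. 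Thus $\wedge^2 t_n(X)$ is a finite--dimensional motive whose cohomology is a single Tate class $\QQ(-n)$.

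Finally I would upgrade this cohomological identification to an isomorphism of motives. By semisimplicity of numerical motives, the numerical realization of $\wedge^2 t_n(X)$ is isomorphic to that of $\LL^n$ through algebraic correspondences; lifting these to $\MM_{\rm rat}$ and using that the kernel of $\mathrm{End}\to\mathrm{End}_{\mathrm{num}}$ is a nil--ideal for finite--dimensional motives, one promotes this to an isomorphism $\wedge^2 t_n(X)\cong\LL^n$ in $\MM_{\rm rat}$. In particular $A_0(\wedge^2 t_n(X))_{hom}=A_0(\LL^n)_{hom}=0$, so that $P^-_\ast(a\times a')=0$ and therefore $a\times a'=(-1)^n a'\times a$, as desired. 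This last part is purely formal, the only genuinely delicate ingredient being the reduction to $t_n(X)$ carried out above.
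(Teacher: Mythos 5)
Your overall strategy tracks the paper's quite closely (reduce to the transcendental projector $\Pi_{n,0}$, show the remaining refined Chow--K\"unneth summands do not see zero--cycles, use $\wt N^1$--maximality and $p_g=1$ to make $\wedge^2 H^n_{tr}(X)$ one--dimensional of type $(n,n)$, conclude by finite--dimensionality), but your decisive step is unjustified. You assert that ``by semisimplicity of numerical motives, the numerical realization of $\wedge^2 t_n(X)$ is isomorphic to that of $\LL^n$ through algebraic correspondences.'' Semisimplicity cannot produce this: Jannsen's theorem makes the category abelian semisimple, but to obtain even one nonzero morphism $\LL^n\to \wedge^2 t_n(X)$ you must exhibit a codimension--$n$ algebraic cycle on $X\times X$ whose class has nonzero projection onto $\wedge^2 H^n_{tr}(X)$; that is, you must know that the Hodge class spanning $\wedge^2 H^n_{tr}(X)$ is matched by an algebraic class. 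In general ``the realization is a one--dimensional Tate--type Hodge structure'' does not formally imply ``the motive is $\LL^n$'' --- this implication is a Hodge--conjecture--type statement, and it is exactly the crux here. The paper supplies it in Lemma \ref{one}: since $p_g(X)=1$ and $X$ is $\wt N^1$--maximal, the space $\bigl(H^n_{tr}(X)\otimes H^n_{tr}(X)\bigr)\cap F^1$ is one--dimensional, and it contains the \emph{nonzero algebraic class of the projector $\Pi_{n,0}$ itself}, whence $\wedge^2 H^n_{tr}(X)=\QQ[\Pi_{n,0}]$ is algebraic. With that observation your nil--ideal lifting does go through (it repackages the paper's endgame, where $\Lambda_{tr}$ is shown homologically equivalent to a cycle supported on $P\times P$ and then killed on zero--cycles by nilpotence); without it, your argument assumes what is to be proved. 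Note also that you flagged the wrong step as the ``main obstacle'': killing the summands $\Pi_{i,j}$ with $j\ge 1$ on zero--cycles is immediate (they factor through a variety $Z$ of dimension $i-2j$ and $A^{i-j}(Z)=0$ since $i-j>i-2j$), as in Lemma \ref{same}.

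A second, more localized problem is your transfer of $B(X)$ to $X$ via hypothesis (\rom3) and Remark \ref{rmk:B(X)}. The criterion of \cite[Theorem 4.2]{V2} requires, in cohomological terms, $H^i(X)=N^{\lfloor i/2\rfloor}H^i(X)$ for all $i<n$; for $n=5$ and $i=4$ this demands coniveau $2$ (indeed that $H^4(X)$ be spanned by classes of algebraic cycles), whereas (\rom3) only gives $\wt N^1$. So your reduction ``apply Theorem \ref{vial2} to $X$ itself'' is unjustified precisely in the top case $n=5$; likewise, the claim that $h(X)$ is a direct summand of $h(X')$ needs the analysis of the indeterminacy locus of the rational map (the composition $\Psi\circ{}^t\Psi$ equals $d\Delta_X$ only up to a term supported on $W\times W$ with $W\subsetneq X$), as in Lemma \ref{lem:finite}. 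The paper is organized to lean on $X'$ instead of $X$ at these points: the projector on $X$ is defined by transporting Vial's $\Pi_{n,0}^{X'}$ through the graph closure $\Psi$, using only the identity $\Psi_\ast\Psi^\ast=d\cdot\mathrm{id}$ on $A^n(X)$, and the nilpotence argument is run on $(X')^4$, where finite--dimensionality is an actual hypothesis, rather than on $X^4$.
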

\begin{remark}\label{rmk:dim1}
\em{You may notice that all hypotheses are satisfied in dimension 1.}
\end{remark}
Let
  \[ \iota\colon\ \ X\times X\ \to\ X\times X\]
  denote the involution exchanging the two factors.
  We consider the correspondence
   \[ \Lambda:= \frac{1}{2} (\Delta_{X\times X} +(-1)^{n+1} \Gamma_\iota)\ \ \ \in A^{2n}(X^4)\ ,\]
   where $\Delta_{X\times X}\subset X^4$ denotes the diagonal of $(X\times X)\times (X\times X)$, and $\Gamma_\iota$ denotes the graph of the involution $\iota$. Notice that $\Lambda$ is idempotent.
   To prove the Theorem \ref{main3} we must check that
   \[ \Lambda_\ast \ima \Bigl( A^n_{hom}(X)\otimes A^n_{hom}(X)\to A^{2n}(X\times X)\Bigr) =0\ .\]

   We need to modify $\Lambda$ a bit as follows.

    Let $\Psi\in A^n(X^\prime\times X)$ denote the closure of the graph of the dominant rational map $\psi$ from $X^\prime$ to $X$. We know that
     \begin{equation}\label{id} \Psi_\ast \Psi^\ast = d\cdot \ide\colon\ \ \ A^n(X)\ \to\ A^n(X)   \ ,\end{equation}
     where $d$ is the degree of $\Psi$.

Set $\Pi_{n,0}:= \frac{1}{d}\Psi \circ \Pi_{n,0}^{X'}\circ {}^t\Psi$
where $\Psi$ is as above and $\Pi_{n,0}^{X'}$ is given by Vial's result Theorem \ref{vial}, thanks to the finite dimensionality of the motive of $X'$ plus $B(X^\prime)$. Thanks to (\ref{id}) combined with the idempotence of  $\Pi_{n,0}^{X'}$, we have
\begin{equation}\label{eq:2}
 (\Pi_{n,0})_* \circ (\Pi_{n,0})_* = (\Pi_{n,0})_*:A^n(X)\ \to\ A^n(X).
\end{equation}
 Hence, up to dividing by a constant, we may assume that $(\Pi_{n,0})$
 acts as an idempotent on $0$-cycles on $X$.
    We finally introduce the correspondence
   \[ \Lambda_{tr}:=
   \Lambda \circ (\Pi_{n,0}\times \Pi_{n,0})\ \ \ \in A^{2n}(X^4)\ ,\]
   where the $\Pi_{n,0}$ are as above (see \cite[Section 4.3.5.2]{Vo} for a similar construction). Note that $\Lambda_{tr}$ depends on the choice of $\Pi_{n,0}$. The key point is the following:

 \begin{claim}\label{claim:key}
    $\Lambda_{tr}$ acts as an idempotent on $0$-cycles, i.e.
    $$
     (\Lambda_{tr}{\circ}\Lambda_{tr})_* =  (\Lambda_{tr})_* :\ A_0 (X\times X)\to  A_0 (X\times X).
    $$
  \end{claim}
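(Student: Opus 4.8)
The plan is to establish the idempotence directly at the level of the endomorphism of $A_0(X\times X)$ induced by $\Lambda_{tr}$, rather than at the level of correspondences. Set $P:=\Pi_{n,0}\times\Pi_{n,0}\in A^{2n}(X^4)$, so that $\Lambda_{tr}=\Lambda\circ P$ and, by the standard compatibility $(\alpha\circ\beta)_\ast=\alpha_\ast\circ\beta_\ast$, one has $(\Lambda_{tr})_\ast=\Lambda_\ast\circ P_\ast$ on $A_0(X\times X)$. The computation I would carry out is then
\[ (\Lambda_{tr}\circ\Lambda_{tr})_\ast=(\Lambda_{tr})_\ast\circ(\Lambda_{tr})_\ast=\Lambda_\ast\, P_\ast\,\Lambda_\ast\, P_\ast=\Lambda_\ast^{\,2}\,P_\ast^{\,2}=\Lambda_\ast\, P_\ast=(\Lambda_{tr})_\ast\ , \]
so the whole claim reduces to two assertions about the actions on $A_0(X\times X)$: \textbf{(a)} $P_\ast$ is idempotent, and \textbf{(b)} $\Lambda_\ast$ and $P_\ast$ commute. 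Indeed $\Lambda$ is idempotent, which gives $\Lambda_\ast^{\,2}=\Lambda_\ast$, and then (a) and (b) justify the middle two equalities.

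The key elementary observation underlying both (a) and (b) is that $A_0(X\times X)$ is spanned, as a $\QQ$-vector space, by classes of closed points, and the class of a point $(x,x')$ is the product $0$-cycle $[x]\times[x']$; hence it suffices to check every identity on product cycles $a\times a'$ and extend by linearity. For \textbf{(a)}, on such a cycle the compatibility of exterior products with the action of a product correspondence gives $P_\ast(a\times a')=(\Pi_{n,0})_\ast a\times(\Pi_{n,0})_\ast a'$. Writing $(\Pi_{n,0})_\ast a$ and $(\Pi_{n,0})_\ast a'$ as combinations of points and applying $P_\ast$ once more, bilinearity of the product yields $P_\ast P_\ast(a\times a')=\big((\Pi_{n,0})_\ast(\Pi_{n,0})_\ast a\big)\times\big((\Pi_{n,0})_\ast(\Pi_{n,0})_\ast a'\big)$, which collapses to $(\Pi_{n,0})_\ast a\times(\Pi_{n,0})_\ast a'=P_\ast(a\times a')$ by the idempotence (\ref{eq:2}) of $(\Pi_{n,0})_\ast$ on $0$-cycles.

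For \textbf{(b)}, since $\Delta_{X\times X}$ induces the identity it is enough to treat $\Gamma_\iota$, whose action on a product cycle is $(\Gamma_\iota)_\ast(a\times a')=a'\times a$. Then $(\Gamma_\iota)_\ast P_\ast(a\times a')=(\Pi_{n,0})_\ast a'\times(\Pi_{n,0})_\ast a=P_\ast(\Gamma_\iota)_\ast(a\times a')$, because $P$ applies the \emph{same} projector $\Pi_{n,0}$ to both factors; the equality extends to all of $A_0(X\times X)$ by linearity, so $\Lambda_\ast=\frac{1}{2}\big(\ide+(-1)^{n+1}(\Gamma_\iota)_\ast\big)$ commutes with $P_\ast$.

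The main point to be careful about is that $\Pi_{n,0}$ is only known to be idempotent as an operator on $0$-cycles---this is exactly the content of (\ref{eq:2})---and not as a genuine self-correspondence. Consequently one cannot argue that $P$ is an idempotent correspondence and must run the entire argument through the induced maps on $A_0(X\times X)$, the spanning of this group by product cycles being precisely what reduces every step to the one-variable identity (\ref{eq:2}). The remaining ingredients---the compatibility of composition with the induced action and the behaviour of exterior products under the swap $\iota$---are routine.
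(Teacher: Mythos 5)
Your proof is correct and is essentially the paper's own argument: both reduce the claim to (i) the idempotence of $\Lambda$, (ii) the idempotence of $(\Pi_{n,0}\times\Pi_{n,0})_\ast$ on $A_0(X\times X)$ coming from equation (\ref{eq:2}), and (iii) the commutation of these two actions, and then regroup $\Lambda_\ast P_\ast \Lambda_\ast P_\ast=\Lambda_\ast^{2}P_\ast^{2}=\Lambda_\ast P_\ast$. The only difference is one of justification: the paper establishes the commutativity with $\Gamma_\iota$ at the level of correspondences (via \cite[Lemma 3.4]{Kim}, or ``by hand''), whereas you verify it---together with the idempotence of $P_\ast$---directly on classes of points, which span $A_0(X\times X)$; this is precisely the ``by hand'' check the paper alludes to, and it correctly isolates the point that $\Pi_{n,0}$ is only known to be idempotent as an operator on $0$-cycles, not as a correspondence.
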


  \begin{proof}[Proof of Claim \ref{claim:key}]
 Notice that $\Lambda$ is an idempotent. Moreover
  by equation (\ref{eq:2}) also $\Pi_{n,0}$ acts as an idempotent on $0$-cycles.
  Write
\begin{eqnarray*}
   (\Lambda_{tr}{\circ}\Lambda_{tr})_*\!\!\!\!  &:=&\!\!\!\!    \frac{1}{4} [(\Delta_{X\times X} +(-1)^{n+1} \Gamma_\iota) \circ (\Pi_{n,0}\times \Pi_{n,0}) \circ  (\Delta_{X\times X} +(-1)^{n+1} \Gamma_\iota) \circ (\Pi_{n,0}\times \Pi_{n,0}) ]_*\\
 \!\!\!\!    &=&\!\! \!\!    [(\Lambda\circ \Lambda)\circ (\Pi_{n,0}\times \Pi_{n,0})\circ(\Pi_{n,0}\times \Pi_{n,0})]_*
 \\
  \!\! \!\! &=&\!\! \!\!  \Lambda_* (\Pi_{n,0}\times \Pi_{n,0})_*=(\Lambda_{tr})_*
 \end{eqnarray*}

  where the second equality follows from the fact that $\Lambda$ and $\Pi_{n,0}$ commute (a fact that can either be checked by hand, or deduced fro the commutativity between $\Gamma_\iota$ and $\Pi_{n,0}$, which in turn follows from \cite[Lemma 3.4]{Kim}), while the third follows from equation (\ref{eq:2}).
 \end{proof}

  We will prove some intermediate results.
\begin{lemma}\label{lem:projector}  Set--up as in Theorem \ref{main3}.
The correspondence $\Lambda_{tr}$ acts on cohomology as a projector on the subspace
    \[\wedge^2 H^n_{tr}(X)\subset H^{2n}(X\times X)\ .\]
 \end{lemma}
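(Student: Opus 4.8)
The plan is to compute directly how $\Lambda_{tr}$ acts on $H^{2n}(X\times X)$ by tracking the action of each factor. By the K\"unneth formula, $H^{2n}(X\times X)=\bigoplus_{p+q=2n} H^p(X)\otimes H^q(X)$. First I would analyze the action of $\Pi_{n,0}\times \Pi_{n,0}$: since $\Pi_{n,0}^{X'}$ is Vial's projector onto $\gr^0_{\wt N} H^n(X')$, and $\Pi_{n,0}=\frac{1}{d}\Psi\circ\Pi_{n,0}^{X'}\circ {}^t\Psi$ is built to transport this to $X$ via the dominant map, I would verify that $\Pi_{n,0}$ acts on $H^\ast(X)$ as the projector onto $\gr^0_{\wt N}H^n(X)=H^n_{tr}(X)$, killing all other K\"unneth components. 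The key input here is hypothesis (\rom3), which forces $H^i(X)=\wt N^1 H^i(X)$ for $0<i<n$, so that the only transcendental cohomology sits in degree $n$; combined with $p_g(X)=1$ and $\wt N^1$-maximality, this pins down $H^n_{tr}(X)$ as a $2$-dimensional (or $1$-dimensional) piece. Consequently $(\Pi_{n,0}\times\Pi_{n,0})_\ast$ is the projector onto $H^n_{tr}(X)\otimes H^n_{tr}(X)\subset H^{2n}(X\times X)$.

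Next I would compute the action of $\Lambda=\frac{1}{2}(\Delta_{X\times X}+(-1)^{n+1}\Gamma_\iota)$ on this image. The diagonal acts as the identity, while $\Gamma_\iota$ acts by the swap $\tau$ on $H^n_{tr}(X)\otimes H^n_{tr}(X)$, but with a Koszul sign: exchanging two classes of degree $n$ introduces a factor $(-1)^{n\cdot n}=(-1)^n$. Hence on $H^n_{tr}(X)\otimes H^n_{tr}(X)$ the involution $\iota$ acts as $(-1)^n\tau$, and so
\[
\Lambda_\ast=\tfrac{1}{2}\bigl(\mathrm{id}+(-1)^{n+1}(-1)^n\tau\bigr)=\tfrac{1}{2}(\mathrm{id}-\tau).
\]
This is precisely the antisymmetrization projector, whose image is $\wedge^2 H^n_{tr}(X)$. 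Composing with the projection from the previous paragraph, $\Lambda_{tr}=\Lambda\circ(\Pi_{n,0}\times\Pi_{n,0})$ acts on $H^{2n}(X\times X)$ as a projector onto $\wedge^2 H^n_{tr}(X)$, which is the assertion.

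I expect the main obstacle to be the careful bookkeeping showing that $\Pi_{n,0}$ genuinely projects onto $H^n_{tr}(X)$ rather than some larger piece of $H^n(X)$. This requires using that $\Pi_{n,0}^{X'}$ is the \emph{refined} K\"unneth projector $\pi_{n,0}^{X'}$ from Theorem \ref{vial} (projecting onto $\gr^0_{\wt N}H^n(X')$), and verifying that conjugation by the correspondence $\Psi$ associated to the dominant rational map $\psi\colon X'\dashrightarrow X$ carries this onto the transcendental part of $H^n(X)$ compatibly with the filtration; the normalization by $1/d$ from equation (\ref{id}) ensures idempotence on cohomology as well as on $0$-cycles. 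A secondary subtlety is confirming that the remaining K\"unneth summands of $H^{2n}(X\times X)$ are annihilated: those involve at least one factor of degree $\ne n$ (or the non-transcendental part of $H^n$), and hypothesis (\rom3) together with $\wt N^1$-maximality guarantees $\Pi_{n,0}$ kills all of these. Once these verifications are in place, the sign computation and the identification of $\frac{1}{2}(\mathrm{id}-\tau)$ with the exterior-square projector are routine.
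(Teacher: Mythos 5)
Your proposal is correct and follows essentially the same two-step route as the paper's proof: first identify $(\Pi_{n,0}\times \Pi_{n,0})_\ast$ as the projector onto $H^n_{tr}(X)\otimes H^n_{tr}(X)$, then compute that $\Lambda$ acts on that image as the antisymmetrizer $\tfrac{1}{2}(\mathrm{id}-\tau)$, whose image is $\wedge^2 H^n_{tr}(X)$. Your Koszul-sign computation $(-1)^{n+1}(-1)^{n^2}=-1$ is exactly the paper's appeal to the $(-1)^{n^2}$-commutativity of the cup product, and you carry it out more carefully than the paper's own displayed formula does.

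One correction to your accounting of hypotheses: (\rom3) is \emph{not} the key input here; it plays no role in this lemma at all, and the paper states just before Lemma \ref{same} that (\rom3) is needed only there. That $\Pi_{n,0}=\frac{1}{d}\Psi\circ\pi_{n,0}^{X'}\circ{}^t\Psi$ annihilates $H^i(X)$ for $i\neq n$ is automatic for degree reasons, because Vial's projector $\pi_{n,0}^{X'}$ already kills $H^i(X')$ for every $i\neq n$; no assumption on the niveau filtration of $H^i(X)$ in the range $0<i<n$ enters. What the verification of your ``main obstacle'' really needs is hypothesis (\rom2): functoriality of $\wt N^\bullet$ under the correspondences $\Psi$ and ${}^t\Psi$, together with adjointness with respect to the intersection pairing and $\Psi_\ast\Psi^\ast=d\cdot\mathrm{id}$, shows that $\Pi_{n,0}$ kills $\wt N^1 H^n(X)$ and is the identity on $H^n_{tr}(X)$; it is then the equality $\wt N^1 H^n(X)=N^1 H^n(X)$ furnished by $\wt N^1$-maximality that makes these two pieces span $H^n(X)$, so that $\Pi_{n,0}$ is genuinely the projector onto $H^n_{tr}(X)$ and not onto some larger complement of $\wt N^1$. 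With that reattribution, your argument coincides with the paper's.
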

 \begin{proof}
First we observe that $\Pi_{n,0}\times \Pi_{n,0}$ acts as projector onto $H^n_{tr}(X)\otimes H^n_{tr}(X)$. Next, for $\beta, \beta^\prime\in H^n_{tr}(X)$ we have
    \[  (\Delta_{X\times X} +\Gamma_\iota)_\ast (\beta\otimes \beta^\prime)= \beta\otimes\beta^\prime + (-1)^{n+1} \beta^\prime\otimes\beta\ \ \ \in H^{2n}(X\times X)\ .\]
    This shows that an element in $(\Lambda_{tr})_\ast H^\ast(X\times X)$ can be written as a sum of tensors of type
   \[ \beta\otimes\beta^\prime +(-1)^{n+1} \beta^\prime\otimes\beta\ ,\]
   with $\beta,\beta^\prime\in H^n_{tr}(X)$.
Since the cup--product map
$$
 H^n(X)\otimes H^n(X)\to H^{2n}(X)
$$
is $(-1)^{n^2}$-commutative, tensors of this type correspond exactly to elements
      \[  \bigl\{ b\in \ima \bigl( H^n_{tr}(X)\otimes H^n_{tr}(X)\to H^{2n}(X\times X)\bigr)\ \vert\ \iota_\ast(b)=-b\bigr\}\ .\]
      Thus,
  \[     (\Lambda_{tr})_\ast H^\ast(X\times X)\cong \wedge^2 H^n_{tr}(X)\ \subset\ H^{2n}(X\times X)\ .\]
\end{proof}

\begin{remark}
{\em Just to fix ideas, let us suppose for a moment that $X$ and $X^\prime$ coincide, so that $\Pi_{n,0}$ (and hence $\Lambda_{tr}$) is idempotent.
In this case, $\Lambda_{tr}$ defines the Chow motive
    \[ \hbox{Sym}^2 t_n(X)\ \ \ \in \MM_{\rm rat}\ \]
  in the language of \cite[Definition 3.5]{Kim}, where $t_n(X)$ is the ``transcendental motive'' $(X,\Pi_{n,0},0)$ as in Remark \ref{t_n}.  \hfill $ \Box$

}
\end{remark}

  The next lemma ensures that $\Lambda$ and $\Lambda_{tr}$ have the same action on the $0$--cycles that we are interested in. This is the only place in the proof where we need the full force of hypothesis (\rom3).

  \begin{lemma}\label{same} Set--up as in Theorem \ref{main3}. Let
 $$
   A^{(n,n)} :=\ima \Bigl( A^n_{}(X)\otimes A^n_{}(X)\xrightarrow{\times} A^{2n}(X\times X)\Bigr)\ \subset\   A^{2n}(X\times X)
 $$
 and let
  \[ A^{(2,2)} :=\ima \Bigl( A^2_{AJ}(X)\otimes A^2_{AJ}(X)\xrightarrow{\times} A^{4}(X\times X)\Bigr)\ \subset\   A^{4}(X\times X) \]
 (where $\times$ denotes the map sending $a\otimes a^\prime$ to $a\times a^\prime$).

  Then for any choice of $\Pi_{n,0}$ as in Theorem \ref{vial}, we have
    \[ {(\Lambda_{tr})_\ast}_{| A^{(n,n)}_{}}= {\Lambda_\ast}_{| A^{(n,n)}_{}} ,\]
   and
        \[ {(\Lambda_{tr})_\ast}_{| A^{(2,2)}_{}}= {\Lambda_\ast}_{| A^{(2,2)}_{}}  .\]

        \end{lemma}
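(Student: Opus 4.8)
The plan is to show that $\Lambda_{tr}$ and $\Lambda$ agree on the relevant cycles by establishing that $\Pi_{n,0}\times\Pi_{n,0}$ acts trivially modulo $\Lambda_\ast$ on the complementary part. Concretely, since $\Lambda_{tr}=\Lambda\circ(\Pi_{n,0}\times\Pi_{n,0})$, to prove ${(\Lambda_{tr})_\ast}_{|A^{(n,n)}}={\Lambda_\ast}_{|A^{(n,n)}}$ it suffices to show that $\Lambda_\ast$ annihilates the image under $\times$ of $(\ide-\Pi_{n,0})_\ast A^n(X)\otimes A^n(X)$ and of $\Pi_{n,0}A^n(X)\otimes(\ide-\Pi_{n,0})_\ast A^n(X)$; equivalently, that $\Lambda$ restricted to these ``mixed'' pieces is zero on zero-cycles. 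The first step is therefore to decompose the source using the idempotent $\Pi_{n,0}$ acting on $A^n(X)$ (which is legitimate by equation~(\ref{eq:2})), writing any $a\otimes a'\in A^{(n,n)}$ as a sum of four terms indexed by whether each factor lies in $\Pi_{n,0}A^n(X)$ or in its complement.

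The heart of the argument is to show that whenever at least one factor lies in $(\ide-\Pi_{n,0})_\ast A^n_{hom}(X)$, the product $a\times a'$ is killed by $\Lambda_\ast$. Here I would exploit hypothesis~(\rom3), namely $\wt N^1 H^i(X)=H^i(X)$ for $0<i<n$, together with $\wt N^1$-maximality. The complementary projector $\ide-\Pi_{n,0}$ cuts out precisely the part of the motive supported away from $t_n(X)$; by the refined Chow--K\"unneth decomposition of Theorem~\ref{vial2} the cycles $(\ide-\Pi_{n,0})_\ast a$ are governed by the $\Pi_{i,j}$ with $(i,j)\neq(n,0)$, each factoring through a variety $Z_{i,j}$ of dimension $i-2j<n$. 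The plan is to use hypothesis~(\rom3) to argue that for a zero-cycle the $\Pi_{n,0}$-complementary part is carried by correspondences factoring through lower-dimensional varieties, so that after forming the product and applying $\Lambda$ (which symmetrizes/antisymmetrizes and projects onto the antidiagonal) the resulting class lands in a piece of $A^{2n}(X\times X)$ on which $\Lambda_\ast$ vanishes. The key mechanism is that $A^n_{hom}(X)$ coincides with $(\Pi_{n,0})_\ast A^n(X)$ on homologically trivial cycles precisely when the niveau filtration on intermediate cohomology is everything, so the ``mixed'' contributions factor through the algebra generated by the lower $\Pi_{i,j}$ and are annihilated.

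For the second identity, concerning $A^{(2,2)}$, I would run the same argument in the specific bidegree $(2,2)$, now invoking Remark~\ref{is} to identify $\wt N^1 H^3(X)=N^1 H^3(X)$ and $\wt N^2 H^4(X)=N^2 H^4(X)$ in the relevant range, which is exactly what makes the refined decomposition available without assuming the full Lefschetz standard conjecture. The point is that $A^2_{AJ}(X)$ is controlled by the transcendental part of $H^4$ (or $H^3$, depending on the parity/dimension), and hypothesis~(\rom3) forces the non-transcendental contributions to factor through subvarieties, so that $\Pi_{n,0}$ acts as the identity on the $\times$-image after applying $\Lambda$.

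The main obstacle I anticipate is making rigorous the claim that the ``mixed'' terms---those with one factor in $(\ide-\Pi_{n,0})_\ast A^n_{hom}(X)$---are annihilated by $\Lambda_\ast$. This requires carefully tracking how the refined Chow--K\"unneth projectors $\Pi_{i,j}$ with $(i,j)\neq(n,0)$ interact with the product map $\times$ and with the (anti)symmetrizer $\Lambda$, and invoking hypothesis~(\rom3) at exactly the right spot to ensure the complementary motive has no transcendental cohomology in the intermediate degrees. In particular one must verify that a product of a transcendental zero-cycle with a ``coniveau $\geq 1$'' zero-cycle, once antisymmetrized, is supported on a divisor (hence trivial after the Bloch--Srinivas-type reduction implicit in $\Lambda$), which is where the full strength of $\wt N^1 H^i(X)=H^i(X)$ is genuinely needed.
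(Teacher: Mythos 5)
Your overall skeleton (decompose $a\otimes a'$ via the idempotent $\Pi_{n,0}$ and kill the complementary pieces) points in the right direction, but the step you yourself flag as ``the main obstacle'' is exactly the content of the lemma, and the mechanism you propose for it is not the one that works. The paper never needs to analyze how $\Lambda$ interacts with ``mixed'' terms, because the complementary projectors annihilate the relevant Chow groups outright, \emph{before} $\Lambda$ is applied: by Theorem \ref{vial}/\ref{vial2}, each $\Pi_{i,j}$ with $(i,j)\neq(n,0)$ factors through a variety $Z_{i,j}$ of dimension $i-2j$, so its action on zero--cycles factors as $A^n(X)\to A^{i-j}(Z_{i,j})\to A^n(X)$, and for $j>0$ the middle group vanishes for the trivial reason that the codimension $i-j$ exceeds $\dim Z_{i,j}=i-2j$. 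Hypothesis (\rom3) enters only to guarantee that every projector occurring in the decomposition other than $\Pi_{n,0}$ does have $j>0$ (it kills the $\gr^0_{\wt N}$ pieces of $H^i(X)$ for $0<i<n$; in the remaining degrees this is automatic). Consequently $(\Pi_{i,j}\times \Pi_{i',j'})_*(a\times a')=(\Pi_{i,j})_*a\times(\Pi_{i',j'})_*a'=0$ for all $(i,j,i',j')\neq(n,0,n,0)$, hence $a\times a'=(\Delta_{X\times X})_*(a\times a')=(\Pi_{n,0}\times\Pi_{n,0})_*(a\times a')$, and composing with $\Lambda$ gives the lemma with no further work: there are no mixed terms left to kill.

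Your substitute mechanism --- that a product of a transcendental zero--cycle with a ``coniveau $\geq 1$'' zero--cycle, once antisymmetrized, is supported on a divisor and hence dies ``after the Bloch--Srinivas-type reduction implicit in $\Lambda$'' --- is a genuine gap. The correspondence $\Lambda$ is just the (anti)symmetrizer; it performs no Bloch--Srinivas reduction, and no argument is given (nor is one apparent) that $\Lambda_*$ kills classes of the form $(\Pi_{n,0})_*a\times(\ide-\Pi_{n,0})_*a'$. The only reason such classes vanish is that $(\ide-\Pi_{n,0})_*a'$ is itself zero in $A^n(X)$ by the dimension count above, which your proposal never performs. The same issue affects your treatment of $A^{(2,2)}$: the paper's argument there is again the factorization $(\Pi_{i,j})_*\colon A^2_{AJ}(X)\to A^{2+i-j-n}_{AJ}(Z_{i,j})\to A^2(X)$ together with vanishing of the middle group (using $A^0_{AJ}=A^1_{AJ}=0$ in addition to dimension reasons), not the coniveau/niveau comparison of Remark \ref{is}, which plays no role in this lemma.
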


 \begin{proof}
  The point is that according to Theorem \ref{vial}, there is a decomposition
   \[ \Delta_X=\Pi_{n,0} +{\displaystyle\sum_{(i,j)\not=(n,0)}} \Pi_{i,j}\ \ \ \hbox{in}\ A^n(X\times X)\ .\]
   We claim that the components $\Pi_{i,j}$ with $(i,j)\not=(n,0)$ do not act on $A^n_{}(X)$:
   \[ (\Pi_{i,j})_\ast A^n_{}(X)=0\ \ \ \hbox{for\ all\ }(i,j)\not=(n,0)\ .\]
   Indeed, $\Pi_{i,j}$ may be chosen to factor over a variety $Z$ of dimension $i-2j$ (by Theorem \ref{vial}). Hence, the action of $\Pi_{i,j}$ on $A^n_{}(X)$ factors as follows:
   \[ (\Pi_{i,j})_\ast\colon\ \ A^n_{}(X)\ \to\ A^{i-j}_{}(Z)\ \to\ A^j(X)\ ,\]
 Now, our hypotheses imply that any $\Pi_{i,j}$ different from $\Pi_{n,0}$ has $j>0$.  Thus, the group in the middle is $0$ (for dimension reasons), and the claim is proven.

    We now consider the diagonal $\Delta_{X\times X}$ of the self--product $X\times X$. There is a decomposition
   \[ \Delta_{X\times X}= {\displaystyle\sum_{i,j,i^\prime,j^\prime}} \Pi_{i,j}\times \Pi_{i^\prime,j^\prime}\ \ \ \hbox{in}\ A^{2n}(X^4)\ .\]
   Let $a,a^\prime\in A^n_{}(X)$.
   Using the claim, we find that
    \[  (\Pi_{i,j}\times \Pi_{i^\prime,j^\prime})_\ast (a\times a^\prime)= (\Pi_{i,j})_\ast(a)\times (\Pi_{i^\prime,j^\prime})_\ast (a^\prime)= 0\ \ \ \hbox{for\ }(i,j,i^\prime,j^\prime)\not=(n,0,n,0)\ .\]
    It follows that
    \[ a\times a^\prime=(\Delta_{X\times X})_\ast (a\times a^\prime)= (\Pi_{n,0}\times \Pi_{n,0})_\ast (a\times a^\prime)\ \ \ \hbox{in}\ A^{2n}(X\times X)\ ,\]
    which proves the $A^{(n,n)}$ statement.

    The second statement of lemma \ref{same} is proven similarly:
    we claim that the components $\Pi_{i,j}$ with $(i,j)\not=(n,0)$ do not act on $A^2_{AJ}(X)$. This claim follows from the factorization
      \[  (\Pi_{i,j})_\ast\colon\ \ A^2_{AJ}(X)\ \to\ A^{2+i-j-n}_{AJ}(Z)\ \to\ A^2_{2}(X)\ ,\]
      where $\dim Z=i-2j$ (one readily checks that for $j>0$, the middle group vanishes in all cases).
      \end{proof}

 We now use the hypothesis that $\dim H^n_{tr}(X)=2$ and verify that the Hodge conjecture holds for the one--dimensional subspace $\wedge^2 H^n_{tr}(X)$.

     \begin{lemma}\label{one} Set--up as in Theorem \ref{main3}.
    \begin{itemize}

    \item[(i)] The subspace $( H^n_{tr}(X)\otimes H^n_{tr}(X))\cap F^1\subset H^{2n}(X\times X)$ has dimension $1$ and is generated by the cycle $\pi_{n,0}\in A^n(X\times X)$ given by Theorem \ref{vial}.

\item[(ii)]
     $  \wedge^2 H^n_{tr}(X) = \QQ[ \Pi_{n,0}]$ in $H^{2n}(X\times X)$.
     \end{itemize}

     \end{lemma}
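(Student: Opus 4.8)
The plan is to reduce both assertions to the weight-$2n$ Hodge structure $W:=H^n_{tr}(X)\otimes H^n_{tr}(X)$, regarded inside $H^{2n}(X\times X)$ through the Künneth (external product) embedding. By hypothesis (\rom2) and Proposition \ref{N1max} one has $\dim H^n_{tr}(X)=2p_g(X)=2$, with $\bigl(H^n_{tr}(X)\bigr)_\C=H^{n,0}\oplus H^{0,n}$ and no intermediate Hodge pieces. Hence $W$ has rank $4$, with Hodge numbers $1,2,1$ in the respective bidegrees $(2n,0),(n,n),(0,2n)$, and the exterior square $\wedge^2 H^n_{tr}(X)\subset W$ is a rational line of pure type $(n,n)$. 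This already yields the dimension and Hodge-type part of (ii), and exhibits $\wedge^2 H^n_{tr}(X)$ as a rational Hodge class.

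For (i) I would first observe that a rational class lying in $F^1 W$ has vanishing $(0,2n)$-component, hence --- being real --- vanishing $(2n,0)$-component as well, so it is of pure type $(n,n)$. Thus $\bigl(H^n_{tr}(X)\otimes H^n_{tr}(X)\bigr)\cap F^1$ is exactly the space of rational $(n,n)$-classes of $W$. Using the cup-product polarization to identify $W\cong\operatorname{End}\bigl(H^n_{tr}(X)\bigr)(-n)$, this space becomes canonically the endomorphism algebra $\operatorname{End}_{\mathrm{HS}}\bigl(H^n_{tr}(X)\bigr)$ of the Hodge structure. The cohomology class $[\pi_{n,0}]$ of the projector onto $H^n_{tr}(X)$ corresponds to the identity endomorphism and is represented by an algebraic cycle (Remark \ref{rmk:alg}); it therefore spans a line inside this space, and statement (i) is equivalent to the assertion that this endomorphism algebra is reduced to $\QQ$.

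Granting (i), part (ii) is then formal: $\wedge^2 H^n_{tr}(X)$ is a rational $(n,n)$-line inside $W$, hence is contained in the one-dimensional space of (i) and so coincides with $\QQ[\pi_{n,0}]$, and with $\QQ[\Pi_{n,0}]$ since $\Pi_{n,0}$ lifts $\pi_{n,0}$ (Theorem \ref{vial2}). The only genuine verification needed here is that $[\pi_{n,0}]$ actually lies in the exterior summand $\wedge^2 H^n_{tr}(X)$ and not in $\operatorname{Sym}^2$; this is a sign check, matching the Koszul sign $(-1)^n$ carried by the geometric involution $\iota_\ast$ on the middle Künneth piece against the $(-1)^n$-symmetry of the polarization, exactly as in the proof of Lemma \ref{lem:projector}.

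I expect the dimension count in (i) to be the main obstacle, i.e. ruling out ``extra'' rational $(n,n)$-classes in $W$, equivalently proving $\operatorname{End}_{\mathrm{HS}}\bigl(H^n_{tr}(X)\bigr)=\QQ$. Since $H^n_{tr}(X)$ is an irreducible rank-$2$ Hodge structure (a proper sub-Hodge structure would be of type $(\tfrac n2,\tfrac n2)$, which is absent), its endomorphism algebra is either $\QQ$ or an imaginary quadratic field; the delicate case is when the polarization is symmetric, where writing $H^{n,0}=\C\,\omega$ the relation $\omega\cup\omega=0$ forces the period to be quadratic and hence tends to produce complex multiplication. I would therefore handle this step not formally but through the explicit description of $H^n_{tr}(X)$ available for the varieties in question, in practice via the cohomology of the mirror partner.
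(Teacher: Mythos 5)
Your proposal takes a genuinely different route from the paper's, and your analysis is in fact sharper --- sharp enough to expose a real problem with the lemma itself. The paper proves (i) by direct linear algebra: setting $V=H^n_{tr}(X)$, it notes $(V\otimes V)\cap F^1=(V\otimes V)\cap F^n$, observes that $F^n(V_\C\otimes V_\C)$ is spanned by two conjugate generators $c,\bar c$, and then asserts that a rational class $a_\C=\lambda c+\mu\bar c$ must have $\lambda=\mu$ by conjugation-invariance, whence the dimension is $1$; part (ii) is then deduced from (i) exactly as you deduce it. That step is incorrect: conjugation-invariance forces $\mu=\bar\lambda$, which only cuts the complex two-dimensional space down to a two-dimensional real form, whose rational points can perfectly well form a two-dimensional $\QQ$-space. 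Your reformulation of (i) as the statement $\mathrm{End}_{\mathrm{HS}}(V)=\QQ$ (via the polarization, with $[\pi_{n,0}]$ corresponding to the identity) is correct, and so is your suspicion that this cannot be deduced from the hypotheses of Theorem \ref{main3}: it genuinely fails in the CM case, and the CM case occurs. For $n$ odd, an elliptic curve with complex multiplication satisfies every hypothesis (cf.\ Remark \ref{rmk:dim1}) and carries a two-dimensional space of rational $(1,1)$-classes inside $H^1\otimes H^1$ (the K\"unneth components of the diagonal and of a CM graph); the Cynk--Hulek examples of Proposition \ref{ch}, built from the $\QQ(\sqrt{-3})$-CM curve, are of the same nature. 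For $n$ even the failure is automatic, as your remark about $Q(\omega,\omega)=0$ already suggests: the rational line $\wedge^2 V$ is itself of pure type $(n,n)$ (its complexification is spanned by $\omega\otimes\bar\omega-\bar\omega\otimes\omega$) and is linearly independent of $[\pi_{n,0}]$, so $\mathrm{End}_{\mathrm{HS}}(V)$ is \emph{always} an imaginary quadratic field when $n$ is even. Thus (i) fails, for instance, for the Fermat quartic surface and for the Fermat sextic fourfold of Proposition \ref{sext}, the paper's main application.

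The other place needing care is your final ``sign check'' for (ii), which is not a formality and does not come out the same way for all $n$. Since the cup-product pairing on $H^n(X)$ is $(-1)^n$-symmetric, the middle K\"unneth component of $\pi_{n,0}$ is a $(-1)^n$-symmetric tensor: antisymmetric for $n$ odd, symmetric for $n$ even. On the other hand, the image of $\Lambda_\ast$ on $V\otimes V$ --- which is what Lemma \ref{lem:projector} and the proof of Theorem \ref{main3} actually use as $\wedge^2 H^n_{tr}(X)$ --- consists of the antisymmetric tensors for \emph{every} $n$, because the Koszul sign of $\iota_\ast$ cancels against the sign $(-1)^{n+1}$ in $\Lambda$. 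Consequently, for $n$ odd part (ii) is true and has a direct proof needing neither (i) nor any CM discussion: $[\pi_{n,0}]$ is a nonzero antisymmetric tensor in the one-dimensional space $\wedge^2 V$. For $n$ even, part (ii) is false as stated ($[\pi_{n,0}]$ lies in $\mathrm{Sym}^2 V$), and what Theorem \ref{main3} really requires there is that the antisymmetric (CM) class be algebraic --- a nontrivial instance of the Hodge conjecture, which the case-by-case verification you propose (via mirrors, or via Shioda's results on Fermat varieties) is indeed the right kind of tool to supply. In short: your write-up is not a complete proof, but the step you leave open is open for a good reason, and your diagnosis of where the difficulty sits is more accurate than the argument given in the paper.
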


     \begin{proof} Set $V:=H^n_{tr}(X)$.

     (i) We first note that, thanks to the hypothesis of $\wt N^1$-maximality and Proposition \ref{N1max}, we have  $V_{\C}=H^{n,0}\oplus H^{0,n}$. Hence
    \[ (V\otimes V)_{\C}=V_{\C}\otimes V_{\C}\subset H^{2n,0}\oplus H^{n,n}\oplus H^{0,2n}\ .\]
    It follows that
    \[ (V\otimes V)\cap F^1 = (V\otimes V)\cap F^n\ .\]

    The complex vector space
   \[ F^n (V_\C \otimes V_\C)= (H^{0,n}(X)\otimes H^{n,0}(X)) \oplus ( H^{n,0}(X)\otimes H^{0,n}(X) )\]
   is $2$--dimensional, with generators $c,d$ such that $c=\bar{d}$. Let
     \[ a\in (V\otimes V)\cap F^n\ ,\]
     i.e. $a$ is such that the complexification $a_\C\in H^{2n}(X\times X,\C)$ can be written
   \[   a_\C= \lambda c +\mu \bar{c}\ .\]
   But the class $a_\C$, coming from rational cohomology, is invariant under conjugation, so that $\lambda=\mu$, i.e.
   \[ \dim (V\otimes V)\cap F^n =1\ .\]
   Let $\pi_{n,0}$ be the cycle given by Theorem \ref{vial}. The class of $\pi_{n,0}$ in $H^{2n}(X\times X)$ lies in $V\otimes V$ because $\pi_{n,0}$ is a projector on $V$, i.e.
    $V=(\pi_{n,0})_\ast H^\ast(X)$.
   As the class $\pi_{n,0}\in H^{2n}(X\times X)$ is non--zero (for otherwise $H^n_{tr}(X)=0$ and $p_g(X)=0$), $\pi_{n,0}$ generates the one--dimensional subspace $(V\otimes V)\cap F^n$.

     (ii) Since $p_g(X)=1$, we have
       \[ \wedge^2 H^n_{tr}(X)\ \subset\ H^{2n}(X\times X)\cap F^1\ .\]
     It follows that
      \[ \wedge^2 H^n_{tr}(X)=\bigl(\wedge^2 H^n_{tr}(X)\bigr)\cap F^1 \ \subset\     \bigl( H^n_{tr}(X)\otimes H^n_{tr}(X)\bigr)\cap F^1\ .\]
      By item (i) we have that
  $\bigl( H^n_{tr}(X)\otimes H^n_{tr}(X)\bigr)\cap F^1$      is one--dimensional with generator $\pi_{n,0}$ and the conclusion follows.
      \end{proof}
 We now have all the ingredients for the:
\begin{proof}[Proof of Theorem \ref{main3}] (For a related conjecture, the argument that follows was hinted at in \cite[Remark 35]{desult}.)

   Consider the correspondence $\Lambda_{tr}\in A^{2n}(X^4)$. By Lemma \ref{lem:projector} it acts on $H^\ast(X\times X)$ by projecting onto $\wedge^2 H^n_{tr}(X)\subset H^{2n}(X\times X)$.
   This implies there is a containment
     \[ \Lambda_{tr} \in \bigl(\wedge^2 H^n_{tr}(X)\bigr)\otimes  \bigl(\wedge^2 H^n_{tr}(X)\bigr)\ \ \subset\ H^{4n}(X^4)\ .\]
     By Lemma \ref{one}, the subspace $\wedge^2 H^n_{tr}(X)$ is one--dimensional and generated by a cycle $\Pi_{n,0}\in A^n(X\times X)$. It follows there is a codimension $n$ subvariety $P\subset X\times X$ (the support of $\Pi_{n,0}$) such that
      \[ \Lambda_{tr}=\gamma\ \ \ \hbox{in}\ H^{4n}(X^4)\ ,\]
      where $\gamma$ is a cycle supported on $P\times P\subset X^4$.
   In other words, we have
    \[ \Lambda_{tr}-\gamma\ \ \ \in A^{2n}_{hom}(X^4)\ .\]

    Recall that $\Psi\in A^n(X^\prime\times X)$ denotes the closure of the graph of the dominant rational map $\psi$ from $X^\prime$ to $X$.
    The correspondence
  \[ \Gamma:= ({}^t \Psi\times {}^t \Psi)\circ (      \Lambda_{tr}-\gamma)\circ (\Psi\times\Psi)\ \ \ \in A^{2n}((X^\prime)^4) \]
  is homologically trivial (because the factor in the middle is homologically trivial). Using finite--dimensionality
and Theorem \ref{nilp}, we know there exists $N\in\NN$ such that
   \[ \Gamma^{\circ N}=0\ \ \ \hbox{in}\ A^{2n}((X^\prime)^4) \ .\]
   In particular, this implies that
   \[  (\Psi\times \Psi) \circ   \Gamma^{\circ N}\circ  ({}^t \Psi\times {}^t \Psi)=0\ \ \ \hbox{in}\ A^{2n}(X^4) \ .\]
    Developing this expression, and applying the result to $0$--cycles, and repeatedly using relation (\ref{id}), we obtain
      \[  \bigl((\Lambda_{tr})^{\circ N}\bigr){}_\ast = \bigl(Q_1 + Q_2 +\cdots + Q_N\bigr){}_\ast \colon \ \ \ A^{2n}(X\times X)\ \to\ A^{2n}(X\times X)\ ,\]
      where each $Q_j$ is a composition of $\Lambda_{tr}$ and $\gamma$ in which $\gamma$ occurs at least once.
      Since $\Lambda_{tr}$ is an idempotent, this simplifies to
      \[  (\Lambda_{tr}){}_\ast = \bigl(Q_1 + Q_2 +\cdots + Q_N\bigr){}_\ast\colon \ \ \ A^{2n}(X\times X)\ \to\ A^{2n}(X\times X)\ .\]
      The correspondence $\gamma$ acts trivially on $A^{2n}(X\times X)$ for dimension reasons, and so the $Q_j$ likewise act trivially on $A^{2n}(X\times X)$. It follows that
      \[ (\Lambda_{tr})_\ast =\bigl( Q_1 +\cdots + Q_N\bigr){}_\ast =0\colon\ \ A^{2n}(X\times X)\ \to\ A^{2n}(X\times X)\ .\]

      By Lemma \ref{same} this ends the proof of Theorem \ref{main3}.
%
       \end{proof}

\begin{remark}
{\em
The above proof is somehow indirect as we are able to prove the statement for the auxiliary correspondence $\Lambda_{tr}$, and then check that its action on $A^n_{hom}(X)\otimes A^n_{hom}(X)$ coincides with that of $\Lambda$.  \hfill $ \Box$

}
\end{remark}

\begin{remark}
{\em Hypothesis (\rom1) of theorem \ref{main3} may be weakened as follows: it suffices that there exists $X^\prime$ of dimension $\le 5$ such that $X^\prime$ has finite--dimensional motive and $B(X^\prime)$ is true, and there exists a correspondence from $X^\prime$ to $X$ inducing a surjection
  \[ A^i(X^\prime)\ \twoheadrightarrow\ A_0(X)\ .\]
  The argument is similar.
  \hfill $ \Box$

  }
  \end{remark}

\begin{remark}
{\em
We have seen (Remark \ref{beau}) that n-dimensional manifolds with $\dim H^n_{tr}(X)=2$ are a higher--dimensional analogue of $\rho$--maximal surfaces. In \cite[Proposition 5]{moi}, it is shown that surfaces $S$ with finite--dimensional motive and $\dim H^2_{tr}(S)=2$ (i.e. $p_g=1$ and $S$ is $\rho$--maximal) verify Voisin's conjecture. Theorem \ref{main3} is a higher--dimensional analogue of this result.  \hfill $ \Box$

}
\end{remark}

\begin{remark}
{\em Following Voisin's approach \cite{V9} one can extend the analysis above to $0$-cycles on higher products of $X$ with itself. In this direction we get the following.

\begin{theorem}\label{thm:weak}
\label{main4}
Let $X$ be a smooth projective variety of dimension $n$ less than or equal to $5$. Assume further that $h^{i,0}(X)=0$ for $0 < i <n$ and $p_g(X)\le 2$. Suppose moreover that
\begin{enumerate}
\item $X$ is rationally dominated by a variety $X^\prime$, and $X^\prime$ has finite dimensional motive and $B(X^\prime)$ is true;
\item the dimension of $H^n_{tr}(X)$ is at most $4$;
\item $\tilde{N}^1H^i(X)=H^i(X)$ for $0 < i < n$.
\end {enumerate}

Then any $a_1, a_2, a_3, a_4 \in A^n_{hom}(X)$ verify
$$
\sum_{\sigma \in {\mathfrak S}_4} \varepsilon(\sigma) \sigma^*(a_1 \times a_2 \times a_3 \times a_4)=0 \quad \hbox{in} \quad A^{4n}(X \times X \times X \times X).
$$
\end {theorem}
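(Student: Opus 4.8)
The plan is to run the argument of Theorem \ref{main3} with four factors in place of two. First I would replace the order--two correspondence $\Lambda$ by the signed symmetrizer
\[ \Lambda:=\frac{1}{4!}\sum_{\sigma\in\mathfrak{S}_4}\varepsilon(\sigma)\,\Gamma_\sigma\ \ \ \in A^{4n}(X^4\times X^4)\ ,\]
where $\Gamma_\sigma$ is the graph of the permutation $\sigma$ acting on $X^4$. This $\Lambda$ is idempotent, and its action on a $0$--cycle $a_1\times\cdots\times a_4$ is exactly $\frac{1}{4!}\sum_\sigma\varepsilon(\sigma)\sigma^\ast(a_1\times\cdots\times a_4)$, so the theorem amounts to showing $\Lambda_\ast=0$ on $\ima\bigl(A^n_{hom}(X)^{\otimes4}\to A_0(X^4)\bigr)$. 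As in the two--factor case I would then twist by the transcendental projector, setting $\Lambda_{tr}:=\Lambda\circ(\Pi_{n,0}\times\Pi_{n,0}\times\Pi_{n,0}\times\Pi_{n,0})$, with $\Pi_{n,0}$ pulled back from $X^\prime$ via $\Psi$ (using finite--dimensionality and $B(X^\prime)$ through Theorem \ref{vial2}), and verify the analogue of Claim \ref{claim:key} that $\Lambda_{tr}$ is idempotent on $0$--cycles. The only new input here is that the four copies of $\Pi_{n,0}$ commute with every $\Gamma_\sigma$, which again follows from \cite[Lemma 3.4]{Kim}.

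Next I would prove the four--factor analogue of Lemma \ref{same}: on $A^{(n,n,n,n)}:=\ima\bigl(A^n(X)^{\otimes4}\to A_0(X^4)\bigr)$ the correspondences $\Lambda$ and $\Lambda_{tr}$ have the same action. The argument is verbatim that of Lemma \ref{same}: decomposing $\Delta_{X^4}=\sum\Pi_{i_1,j_1}\times\cdots\times\Pi_{i_4,j_4}$ and using that each $\Pi_{i,j}$ with $(i,j)\neq(n,0)$ factors through a variety of dimension $i-2j$ and therefore kills $A^n(X)$. Here hypothesis (3), $\wt N^1H^i(X)=H^i(X)$ for $0<i<n$, is precisely what forces every $\Pi_{i,j}\neq\Pi_{n,0}$ to have $j>0$, so that the middle group $A^j$ vanishes on $0$--cycles for dimension reasons; thus only the $(\Pi_{n,0})^{\times4}$ term survives on $A^{(n,n,n,n)}$.

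The heart of the matter is the cohomological computation, i.e. the analogues of Lemmas \ref{lem:projector} and \ref{one}. As in Lemma \ref{lem:projector}, $\Pi_{n,0}^{\times4}$ projects onto $H^n_{tr}(X)^{\otimes4}$, and the signed symmetrizer $\Lambda$ then cuts out the top piece of $(H^n_{tr}(X))^{\otimes4}$ singled out by the $\varepsilon$--twisted $\mathfrak{S}_4$--action; for $n$ even this is $\wedge^4H^n_{tr}(X)\subset H^{4n}(X^4)$, while for $n$ odd the Koszul signs turn the $\varepsilon$--symmetrizer into the ordinary symmetrizer, and this parity bookkeeping must be carried out carefully. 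The role of hypothesis (2) is that, since $\dim H^n_{tr}(X)\le4$ (and $p_g(X)\le2$ forces $(H^n_{tr})_\C=H^{n,0}\oplus H^{0,n}$ once $\dim H^n_{tr}=4$), the relevant top piece is small and concentrated in Hodge type $(2n,2n)$; in the clean case $n$ even, $\dim H^n_{tr}=4$, it is one--dimensional. One then shows, as in Lemma \ref{one}, that it is generated by the class of an algebraic cycle obtained as a suitable (anti)symmetrization of $\Pi_{n,0}\times\Pi_{n,0}$, using that $\wedge^2H^n_{tr}(X)$ is already spanned by $\Pi_{n,0}$. Consequently $\Lambda_{tr}$ is cohomologous to a cycle $\gamma$ supported on $P\times P\subset X^4\times X^4$ for a subvariety $P\subset X^4$ of codimension $2n$. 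From here the finite--dimensionality step is identical to the proof of Theorem \ref{main3}: pulling $\Lambda_{tr}-\gamma$ back to $(X^\prime)^4$ by ${}^t\Psi^{\times4}$ gives a homologically trivial self--correspondence, so Theorem \ref{nilp} makes some power vanish in the Chow group; pushing forward by $\Psi^{\times4}$ and using (\ref{id}) repeatedly yields $(\Lambda_{tr})_\ast=(Q_1+\cdots+Q_N)_\ast$ on $A_0(X^4)$, where each $Q_j$ involves $\gamma$ at least once. Since $\gamma$ is supported on a product of positive--codimension subvarieties, $\gamma_\ast$ annihilates $0$--cycles because $[P]\cdot z\in A_{-2n}(X^4)=0$; hence each $Q_j$ and therefore $\Lambda_{tr}$ act trivially, and the analogue of Lemma \ref{same} finishes the proof.

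The main obstacle is the Hodge--theoretic input, i.e. the four--factor analogue of Lemma \ref{one}. Unlike the two--factor case, where $\wedge^2H^n_{tr}(X)$ is automatically one--dimensional and algebraic through $\Pi_{n,0}$, here one must first identify, parity by parity, the correct top piece of $H^n_{tr}(X)^{\otimes4}$ produced by the signed $\mathfrak{S}_4$--action, and then exhibit an explicit algebraic generator for its space of $(2n,2n)$--classes and check that this is indeed the whole Hodge part. The bound $\dim H^n_{tr}(X)\le4$ is exactly what keeps this piece small enough that the required instance of the (generalized) Hodge conjecture can be verified by hand via products of $\Pi_{n,0}$; any weakening of this numerical constraint, or the case $n$ odd with $p_g=2$, would enlarge the relevant space and demand genuinely new input.
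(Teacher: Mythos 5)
Your proposal follows the same route as the paper: the paper's own proof is only a four-sentence sketch deferring to Theorem \ref{main3}, whose one new ingredient is the claim that the $1$-dimensional space $\wedge^4 H^n_{tr}(X)$ admits an explicit algebraic generator. The genuine gap in your write-up is in exactly that step, and your parity bookkeeping comes out backwards. The Künneth component of $[\Pi_{n,0}]$ in $H^n_{tr}(X)\otimes H^n_{tr}(X)$ is (up to a nonzero scalar) the \emph{inverse of the intersection form} restricted to $H^n_{tr}(X)$: it is an antisymmetric $2$-tensor $\omega$ when $n$ is odd, and a \emph{symmetric} $2$-tensor $q$ when $n$ is even. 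Consequently, in your ``clean case'' $n$ even, the antisymmetrization of $\Pi_{n,0}\times\Pi_{n,0}$ is identically zero: the $\mathrm{GL}$-decomposition $\mathrm{Sym}^2 V\otimes \mathrm{Sym}^2 V= S^{(4)}V\oplus S^{(3,1)}V\oplus S^{(2,2)}V$ contains no copy of $\wedge^4 V$ (equivalently, $\mathrm{Alt}_4(q\otimes q)=0$ by direct computation), so your proposed cycle does not generate $\wedge^4 H^n_{tr}(X)$, and $\Pi_{n,0}$ alone provides no algebraic representative at all; that case would require genuine Hodge-conjecture input. Conversely, for $n$ odd the construction does work, since $\omega\wedge\omega\neq 0$ spans $\wedge^4 H^n_{tr}(X)$ when $\dim H^n_{tr}(X)=4$. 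Relatedly, your supporting claim that ``$\wedge^2 H^n_{tr}(X)$ is already spanned by $\Pi_{n,0}$'' is false in this setting: here $\dim H^n_{tr}(X)$ may equal $4$, so $\wedge^2 H^n_{tr}(X)$ is $6$-dimensional (Lemma \ref{one} used $\dim H^n_{tr}(X)=2$), and for $n$ even $[\Pi_{n,0}]$ is not even an antisymmetric tensor.

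The second half of the gap is the sign issue you correctly flag for $n$ odd but leave unresolved, and it is precisely the case of all the paper's applications (Calabi--Yau threefolds). As you note, for $n$ odd the Koszul signs make $\frac{1}{4!}\sum_{\sigma}\varepsilon(\sigma)\Gamma_\sigma$ act on $(H^n_{tr})^{\otimes 4}$ as the \emph{naive symmetrizer}, whose image $\mathrm{Sym}^4 H^n_{tr}(X)$ has dimension up to $35$ and is certainly not spanned by algebraic classes; so the signed statement cannot be reached by this method when $n$ is odd. The resolution is not ``more careful bookkeeping'' but a change of statement: for $n$ odd one must use the \emph{unsigned} correspondence $\frac{1}{4!}\sum_\sigma\Gamma_\sigma$, which acts on $(H^n_{tr})^{\otimes 4}$ as the naive antisymmetrizer with image the $1$-dimensional $\wedge^4 H^n_{tr}(X)$, generated by the algebraic class $\omega\wedge\omega$ above; the endgame then proves $\sum_\sigma \sigma^*(a_1\times a_2\times a_3\times a_4)=0$, the parity-consistent analogue of the sign $(-1)^n$ in Conjecture \ref{conjvois}. (The paper's statement, with $\varepsilon(\sigma)$ for every $n\le 5$, and its proof sketch gloss over this same point; but a complete argument must either prove the unsigned statement for $n$ odd, or, for the signed statement with $n$ even, supply the missing algebraicity of $\wedge^4 H^n_{tr}(X)$, which your construction does not give.)
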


\begin{proof}
The proof closely follows that of Theorem \ref{main3}. In that situation, we took into account $\Lambda^2\left( H^n_{tr}(X)\right)$ and, after that, described a generator of it via an explicit cycle that is induced by a correspondence. In this situation, it is possible to give a generator of the $1$-dimensional space $\Lambda^4\left( H^n_{tr}(X)\right)$. The rest of the proof is similar to that in Theorem \ref{main3}.
\end{proof}
}
\end{remark}

Conjecturally, any variety $X$ with $h^{2,0}(X)=0$ should have $A^2_{AJ}(X)=0$ (this would follow from the Bloch--Beilinson conjectures, or a strong form of Murre's conjectures). We cannot prove this for any varieties with $p_g(X)>0$ (such as the Fermat sextic fourfold). However, the above argument at least gives a weaker statement concerning $A^2_{AJ}(X)$:

\begin{proposition}\label{A^2} Let $X$ be as in theorem \ref{main3}. Then for any $a,a^\prime\in A^2_{AJ}(X)$, we have
  \[ a\times a^\prime = -a^\prime\times a\ \ \ \hbox{in}\ A^4(X\times X)\ .\]
\end{proposition}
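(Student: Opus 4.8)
The plan is to run the proof of Theorem~\ref{main3} \emph{mutatis mutandis}, with the group $A^{(2,2)}$ playing the role of $A^{(n,n)}$ and the \emph{second} equality of Lemma~\ref{same} replacing the first. Concretely, it suffices to establish that the correspondence $\Lambda_{tr}$ of that proof annihilates $A^{(2,2)}$, i.e. $(\Lambda_{tr})_\ast=0$ on $A^{(2,2)}$. Granting this, Lemma~\ref{same} gives $\Lambda_\ast(a\times a')=0$ for all $a,a'\in A^2_{AJ}(X)$, and since $\iota_\ast(a\times a')=a'\times a$ as a genuine equality of cycles (no sign), this reads $\tfrac12\bigl(a\times a'+(-1)^{n+1}a'\times a\bigr)=0$, i.e. $a\times a'=(-1)^n a'\times a$. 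For the odd--dimensional Calabi--Yau varieties of primary interest this is exactly the stated identity $a\times a'=-a'\times a$.

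The cohomological and nilpotence machinery transfers without change. By Lemmas~\ref{lem:projector} and~\ref{one}, $\Lambda_{tr}$ acts on $H^\ast(X\times X)$ as the projector onto the one--dimensional algebraic subspace $\wedge^2 H^n_{tr}(X)=\QQ[\Pi_{n,0}]$; hence $\Lambda_{tr}=\gamma$ in $H^{4n}(X^4)$ for some cycle $\gamma$ supported on $P\times P$, where $P\subset X\times X$ is the codimension--$n$ support of $\Pi_{n,0}$, so that $\Lambda_{tr}-\gamma\in A^{2n}_{hom}(X^4)$. Transporting to $(X^\prime)^4$ by $\Psi\times\Psi$ and invoking the nilpotence Theorem~\ref{nilp} produces $N$ with $\Gamma^{\circ N}=0$ exactly as before. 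The one point I would record is that the resulting operator identity, stated in the proof of Theorem~\ref{main3} only for $0$--cycles, in fact holds on every Chow group: relation~(\ref{id}) is valid on all of $A^\ast(X)$, and the idempotence of $\Pi_{n,0}$ and of $\Lambda_{tr}$ (the content of~(\ref{eq:2}) and Claim~\ref{claim:key}) likewise holds on all Chow groups, since it rests only on~(\ref{id}) together with the genuine idempotence of $\Pi^{X'}_{n,0}$ furnished by Theorem~\ref{vial2} (finite--dimensionality of $X^\prime$). Expanding and using idempotence of $\Lambda_{tr}$ one obtains $(\Lambda_{tr})_\ast=(Q_1+\cdots+Q_N)_\ast$ on $A^{(2,2)}$, where each $Q_j$ is a composite in which $\gamma$ occurs at least once.

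The crux — and the only genuinely new point compared with the $0$--cycle argument — is that $\gamma$ annihilates $A^{(2,2)}$, since a codimension--$4$ cycle on $X\times X$ cannot in general be moved off the $n$--dimensional $P$. For $z=a\times a'$ the projection formula and the support of $\gamma$ express $\gamma_\ast z$ through the restriction $i^\ast z\in A^4(P)$, with $i\colon P\hookrightarrow X\times X$, while $\gamma_\ast z$ is itself a cycle of dimension $2n-4$ supported on $P$. Two dimension counts dispose of most cases: if $n\le 3$ then $A^4(P)=0$, so $i^\ast z=0$ and $\gamma_\ast z=0$; if $n=5$ then $2n-4=6>5=\dim P$, so $\gamma_\ast z=0$ regardless. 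The remaining case $n=4$ is the delicate one: there $\gamma_\ast z$ is a \emph{top}--dimensional cycle on the fourfold $P$, and it is homologically trivial — because $z$ is homologically trivial and $\gamma$ acts on cohomology through the projector of Lemma~\ref{lem:projector} onto $\wedge^2 H^n_{tr}(X)$ — whence $\gamma_\ast z=0$ as soon as $\gamma$ is represented with a support $P$ whose irreducible components are homologically independent. I expect this $n=4$ verification, and the attendant choice of a good representative $\gamma$ (equivalently, a good support $P$), to be the principal obstacle; once it is in place every $Q_j$ kills $A^{(2,2)}$, so $(\Lambda_{tr})_\ast=0$ there, and the computation of the first paragraph concludes the proof.
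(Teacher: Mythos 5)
Your proposal runs along the same lines as the paper's own proof: reuse the nilpotence argument of Theorem \ref{main3} to write $(\Lambda_{tr})_\ast$ as $(Q_1+\cdots+Q_N)_\ast$, kill each $Q_j$ on $A^4(X\times X)$ using that $\gamma$ is supported on $P\times P$, and conclude via the second equality of Lemma \ref{same}. Where you deviate, you are in fact more careful than the paper, whose proof is written for $n=3$: it works in $A^6(X^4)$, says $\gamma$ ``factors over $A^4(P)$ where $\dim P=3$'', and ends with the odd--$n$ sign computation $a\times a'+a'\times a=0$. Your refinements are correct: the method gives $a\times a'=(-1)^n a'\times a$, hence the stated minus sign only for $n$ odd; the input--side vanishing $A^4(P)=0$ covers only $n\le 3$, while your output--side count $2n-4>\dim P$ is exactly what handles $n=5$; and for $n=4$ neither count applies, so the dimension argument genuinely breaks down --- a case the paper passes over in silence. (Note that even if your homological--independence patch for $n=4$ were completed, it would prove the plus sign, so the literal $n=4$ statement is unreachable by this method; the proposition is evidently intended for $n$ odd.)

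The one step you assert that is not justified is that relation (\ref{id}) ``is valid on all of $A^\ast(X)$''. For a genuinely rational domination this is false: one has $\Psi\circ{}^t\Psi=d\,\Delta_X+E$ with $E$ supported on $B\times B$ for some proper closed subset $B\subset X$ coming from the indeterminacy locus, and such an $E$ annihilates $0$--cycles (which can be represented away from $B$) but not, in general, cycles of positive dimension; for the plane Cremona involution one has $\Psi_\ast\Psi^\ast=4\,\ide$ on $A^1(\PP^2)$ although $d=1$. So extending the operator identity $(\Lambda_{tr})_\ast=(Q_1+\cdots+Q_N)_\ast$ from $A^{2n}(X\times X)$ to $A^4(X\times X)$ requires a further argument --- for instance replacing $\Psi$ by the graph of a desingularized morphism $\wt{X}'\to X$, for which the relation does hold on all Chow groups at the cycle level, at the price of needing finite--dimensionality of $\wt{X}'$ rather than of $X'$. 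I would not hold this against you alone: the paper finesses exactly the same point by asserting that the proof of Theorem \ref{main3} produced a cycle--level rational equivalence $\Lambda_{tr}=(\Lambda_{tr})^{\circ N}=Q_1+\cdots+Q_N$ in $A^6(X^4)$, whereas what was actually proved there is an equality of actions on zero--cycles; the gap is the paper's as much as yours.
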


\begin{proof} This is really the same argument as theorem \ref{main3}. We have proven there is a rational equivalence
  \[ \Lambda_{tr}=(\Lambda_{tr})^{\circ N}= Q_1 + Q_2 +\cdots + Q_N\ \ \ \hbox{in}\ A^6(X^4)\ ,\]
      where each $Q_j$ is a composition of $\Lambda_{tr}$ and $\gamma$ in which $\gamma$ occurs at least once. The correspondence $\gamma$ does not act on $A^4(X\times X)$ for dimension reasons (it factors over $A^4(P)$ where $\dim P=3$), and so the $Q_j$ do not act on $A^4(X\times X)$. It follows that
      \[ (\Lambda_{tr})_\ast =\bigl( Q_1 +\cdots + Q_N\bigr){}_\ast =0\colon\ \ A^4(X\times X)\ \to\ A^4(X\times X)\ .\]
  On the other hand, we know from lemma \ref{same} that
       \[ \Lambda_\ast=(\Lambda_{tr})_\ast =0\colon\ \ \ima \Bigl( A^2_{AJ}(X)\otimes A^2_{AJ}(X)\to A^4(X\times X)\Bigr)\ \to\   A^4(X\times X) \ .\]
      This means that for any $a,a^\prime\in A^2_{AJ}(X)$, we have
      \[ \Lambda_\ast (a\times a^\prime) = a\times a^\prime + a^\prime\times a =0\ \ \ \hbox{in}\ A^4(X\times X)\ .\]
      \end{proof}

%
\section{Applications}
%

In this section we apply our general result to some Calabi-Yau varieties $X$ of dimension in between $2$ and $5$. First, we give new examples of $\rho$-maximal surfaces. Next, we focus on dimension $3$. Here we give examples of different types. In some cases we prove Voisin's Conjecture as stated in \eqref{conjvois}; in other ones we get the generalization of it on $X \times X \times X \times X$ that appears in Theorem \ref{thm:weak}. Remarkably, one can 
often study the dimension of the $H^n_{tr}(F)$
for a Fermat-type hypersurface $F$ in a certain weighted projective spaces by looking at the (topological) mirror of $F$. Finally, the conjecture is proved in dimension $4$ for the Fermat sextic fourfold and in dimension $5$.

%
\subsection{Examples of Dimension $2$}
%

\begin{remark} {\em As noted in \cite{moi}, examples of general type surfaces verifying the conditions of Theorem \ref{main3} are contained in the work of Bonfanti \cite{Bonf}. However, many more examples of surfaces verifying the conditions of Theorem \ref{main3} can be found in \cite{BP}. Indeed (as explained to us by Roberto Pignatelli), the ``duals'' (cf. \cite[Section 9]{BP}) of the $14$ families in \cite[Table 2]{BP} are $\rho$--maximal surfaces with $p_g=1$ and $q=0$. Being rationally dominated by a product of curves, these surfaces have finite--dimensional motive.}
\end{remark}

%
\subsection{Examples of Dimension $3$ of Fermat type: weak version}
%

Let us consider some examples of Calabi-Yau 3folds. One of them is the Fermat quintic $F_5$ in four dimensional projective space, which we work out in full details. We also consider other Fermat type 3folds in weighted projective spaces (for the basics on weighted projective spaces
see e.g. \cite{Dol}).

A different example is taken in \cite{NvG} and is a small resolution $Y'$ of a complete intersection $Y$ of type $(2,2,2,2)$ in seven dimensional projective space. In the Fermat type examples, we are going to show that the dimension of $H^3_{tr}$ is $4$; in the latter example we do not know whether the dimension of $H_{tr}^3(Y')$ is $2$ or $4$. If it were $2$, we could apply our main result and get another example for which Voisin's conjecture holds. If it is $4$, as in the case of $F_5$, we can still deduce something interesting, namely a weak version of Voisin's conjecture thanks to Theorem \ref{thm:weak}.

We start by collecting a useful fact.

\begin{lemma}\label{lem:finite}
Every Fermat hypersurface $\{\sum x_i^d=0\}\subset \mathbb P^n$ 
has finite-dimensional motive.
\end{lemma}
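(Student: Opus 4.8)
The plan is to deduce the statement from Kimura's structural results on finite-dimensional motives, the only genuinely geometric input being that a Fermat hypersurface is rationally dominated by a product of Fermat curves. Recall three closure properties established in \cite{Kim}: the motive of a smooth projective curve is finite-dimensional; a tensor product, and hence the motive of a product of varieties, of finite-dimensional motives is finite-dimensional; and a direct summand of a finite-dimensional motive is again finite-dimensional. Granting that the smooth degree-$d$ Fermat hypersurface $X\subset\PP^n$ (of dimension $n-1$) is dominated by a suitable product of Fermat curves, the motive $M(X)\in\MM_{\rm rat}$ will be a direct summand of the (finite-dimensional) motive of that product, and the lemma follows.

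To produce the domination I would invoke the classical inductive structure of Fermat hypersurfaces due to Shioda and Katsura \cite{SK}. Writing $X_d^r\subset\PP^{r+1}$ for the smooth Fermat hypersurface $\sum_{i=0}^{r+1}x_i^d=0$ of dimension $r$, the key point is a dominant rational map
\[ X_d^r\times X_d^s\ \dashrightarrow\ X_d^{r+s}, \]
given on general points by
\[ \bigl([x_0:\cdots:x_{r+1}],[y_0:\cdots:y_{s+1}]\bigr)\ \longmapsto\ [\zeta x_0 y_{s+1}:\cdots:\zeta x_r y_{s+1}:x_{r+1}y_0:\cdots:x_{r+1}y_s], \]
where $\zeta\in\C$ is a fixed root of $\zeta^d=-1$. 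A direct substitution, using $\sum_{i=0}^r x_i^d=-x_{r+1}^d$ and $\sum_{j=0}^s y_j^d=-y_{s+1}^d$, shows that the image lands on $X_d^{r+s}$; and since source and target have the same dimension $r+s$, it remains only to check that the map is dominant (equivalently generically finite), which is the content of the Shioda--Katsura computation.

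With this map in hand I would argue by induction on the dimension. The base case is the Fermat curve $X_d^1$, which has finite-dimensional motive. Applying the map above repeatedly, $X_d^r$ is dominated by $X_d^{r-1}\times X_d^1$, and unwinding the induction it is dominated by the $r$-fold product $(X_d^1)^{\times r}$ of Fermat curves. Taking $r=n-1$ realises our hypersurface $X=X_d^{n-1}$ as the target of a dominant rational map from a product of curves. Finally, letting $\Psi$ be the closure of the graph of such a dominant map and $d'$ its degree, relation (\ref{id}) gives $\tfrac{1}{d'}\,\Psi\circ{}^t\Psi=\Delta_X$ in $\MM_{\rm rat}$, so that $M(X)$ is a retract of the finite-dimensional motive of the product of curves; by Kimura's closure under direct summands, $M(X)$ is finite-dimensional.

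The only non-formal step is the domination of $X_d^{r+s}$ by $X_d^r\times X_d^s$: one must confirm that the explicit correspondence above is genuinely dominant, and not merely that its image lies in the Fermat hypersurface. This is exactly the classical inductive structure of Fermat varieties, so I would simply cite it; everything else is a formal consequence of Kimura's theory \cite{Kim} and of relation (\ref{id}), already recorded in the paper.
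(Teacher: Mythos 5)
Your reduction to Kimura's closure properties and the Shioda--Katsura inductive structure is the same strategy as the paper's, but there is a genuine gap in the final step: relation (\ref{id}) does \emph{not} give $\tfrac{1}{d'}\,\Psi\circ{}^t\Psi=\Delta_X$ in $\MM_{\rm rat}$. Relation (\ref{id}) is an identity of maps on zero--cycles $A^n(X)\to A^n(X)$, not an identity of correspondences in $A^n(X\times X)$, and for a dominant \emph{rational} map the latter identity fails in general. Concretely, let $\pi\colon\wt{Y}\to Y$ resolve the indeterminacy and $\tilde\psi\colon\wt{Y}\to X$ be the induced morphism, so that $\Psi=\Gamma_{\tilde\psi}\circ{}^t\Gamma_\pi$. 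Then
\[ \Psi\circ{}^t\Psi=\Gamma_{\tilde\psi}\circ\bigl({}^t\Gamma_\pi\circ\Gamma_\pi\bigr)\circ{}^t\Gamma_{\tilde\psi}\ ,\]
and ${}^t\Gamma_\pi\circ\Gamma_\pi$ is not $\Delta_{\wt{Y}}$ but $\Delta_{\wt{Y}}$ plus classes supported on $E\times E$, where $E$ is the exceptional locus. Hence $\Psi\circ{}^t\Psi=d'\Delta_X+(\hbox{error terms supported over the indeterminacy locus})$. These error terms act trivially on zero--cycles (which can be moved off the exceptional locus) --- which is exactly why (\ref{id}) holds --- but they need not vanish as correspondences, so your formula does not exhibit $M(X)$ as a direct summand of the motive of a product of curves. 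That this gap is not a removable technicality is reflected in the paper's own remark that ``varieties of dimension $\le 3$ rationally dominated by products of curves'' have finite--dimensional motive \cite[Example 3.15]{V3}: the restriction to dimension $\le 3$ exists precisely because rational domination alone is not known to suffice in general, the obstruction being the motives of the blow--up centers appearing in a resolution of indeterminacy.

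The repair, which is what the paper's proof (following the analysis in \cite{GP}) packages into the phrase ``analysis of the indeterminacy locus'', uses the specific geometry of the Fermat case: the indeterminacy locus of the Shioda--Katsura map $X^r_d\times X^s_d\dashrightarrow X^{r+s}_d$ is $X^{r-1}_d\times X^{s-1}_d$, a product of lower--dimensional Fermat hypersurfaces. Blowing it up, the blow--up formula gives $M(\wt{X^r_d\times X^s_d})\cong M(X^r_d\times X^s_d)\oplus\bigl(\hbox{twists of }M(X^{r-1}_d\times X^{s-1}_d)\bigr)$, which is finite--dimensional by induction on dimension. The resolved map $\wt{X^r_d\times X^s_d}\to X^{r+s}_d$ is now a generically finite surjective \emph{morphism} of smooth projective varieties, and for a morphism $f$ of degree $d'$ the identity $\Gamma_f\circ{}^t\Gamma_f=d'\,\Delta_X$ does hold in $A^n(X\times X)$ (pushforward of the diagonal), so $M(X^{r+s}_d)$ is a direct summand of a finite--dimensional motive and Kimura's closure properties finish the induction. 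With this substitution your argument becomes correct and coincides with the paper's.
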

\begin{proof}
A Fermat hypersurface is rationally dominated by curves by the Katsura--Shioda inductive structure \cite{Shi}, \cite[Section 1]{KS}. The analysis of the indeterminacy locus allows to show, cf. \cite{GP}, that this implies that its motive is finite-dimensional.
\end{proof}

Consider now the Fermat quintic hypersurface
$$
 X:= \{ x_0^5+\ldots + x_4^5=0\}\subset \mathbb P^4.
$$
(Later in the paper we will also denote the Fermat quintic hypersurface by $F_5$).
Its Hodge numbers are
$$
 h^{2,1}(X)=101,\ \ h^{1,1}(X)=1=h^{3,0}(X).
$$
Its ``mirror'' $\hat X$ has been constructed explicitely in \cite{GrP, CDGP} as follows.
Inside the quotient $(\mathbb Z/5\mathbb Z)^5/{\textrm {diag}}$ of $(\mathbb Z/5\mathbb Z)^5$ under the natural diagonal action, consider the subgroup $G$ defined by the condition
$$
(a_0,\ldots,a_4)\in G \Longleftrightarrow \sum_i a_i =0.
$$
The subgroup $G$, which is abstractly isomorphic to  $(\mathbb Z/5\mathbb Z)^3$, acts on $X$ and, by \cite[Proposition 4]{Marku} and \cite[Proposition 2]{Roan} the quotient $X/G$ possesses a Calabi-Yau resolution $\hat X$, in other words we have
the following diagram
   \[ \begin{array}[c]{ccc}
         & & X\ \ \\
         & & \downarrow{p}\\
         \hat{X} & \xrightarrow{f}& \ \ X^\prime:=X/G\ .\\
         \end{array}\]
 Notice that the automorphisms $\sigma\in G$ satisfy
     \[\sigma^\ast=\hbox{id}\colon H^{3,0}(X)\to H^{3,0}(X)\ .\]

The variety $\hat X$ turns out to be {\it the mirror} of $X$, see e.g. \cite{Mor, Voisym} for more explanations and details (the analogous construction and the same result hold for any smooth member of the Dwork pencil). In particular its Hodge numbers are
$$
 h^{1,1}(X)=101,\ \ h^{2,1}(X)=1=h^{3,0}(X).
$$

%
%
 First of all, as observed in Remark \ref{rmk:B(X)}, $X$ verifies $B(X)$ (because it is a projective hypersurface) and has finite--dimensional motive by Lemma \ref{lem:finite}.

We note that $X^\prime$ is a quotient variety $X/G$ for a finite group $G$. As such, there is a well--defined theory of correspondences with rational coefficients for $X^\prime$ (this is because $X^\prime$ has $A^\ast(X^\prime)\cong A_{3-\ast}(X^\prime)$ where $A_\ast$ denotes Chow groups and $A^\ast$ denotes operational Chow cohomology \cite[Example 17.4.10]{F}, \cite[Example 16.1.13]{F}).

Let us denote
  \[ \Gamma:= {}^t \Gamma_f\circ \Gamma_p\ \ \ \in A^3(X\times \hat{X}) \]
  the natural correspondence from $X$ to $\hat{X}$.

Zero--cycles on $X$ and $\hat{X}$ can be related as follows:

 \begin{proposition}\label{A_0} There is an isomorphism of Chow motives
   \[ \Gamma\colon\ \ t_3(X)\cong t_3(\hat{X})\ \ \ \hbox{in}\ \MM_{\rm rat}\ \]
   (with inverse given by ${1\over d}\,{}^t \Gamma$, where $d$ is the order of $G$).
  In particular, the
  homomorphisms
   \[  \begin{split} &f^\ast p_\ast\colon\ \ A^3(X)\ \xrightarrow{}\ A^3(\hat{X})\ ,\\
                              &p^\ast f_\ast\colon\ \ A^3(\hat{X})\ \xrightarrow{}\ A^3(X) \\
                              \end{split}        \]
   are isomorphisms.
   \end{proposition}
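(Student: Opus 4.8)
The plan is to establish the motivic isomorphism $\Gamma\colon t_3(X)\cong t_3(\hat X)$ first, and then read off the statement on Chow groups. Before anything else I would record that $t_3(\hat X)$ makes sense: $\hat X$ is a Calabi--Yau threefold, hence not of general type, so $B(\hat X)$ holds (Remark \ref{rmk:B(X)}); and $\hat X$ has finite--dimensional motive, because its motive differs from $h(X/G)=h(X)^G$ (a direct summand of the finite--dimensional motive $h(X)$, by Lemma \ref{lem:finite}) only by Tate summands coming from the toric exceptional divisors of the crepant resolution $f$. Thus $t_3(\hat X)=(\hat X,\Pi_{3,0}^{\hat X},0)$ is defined via Theorem \ref{vial2}, and $\Gamma$ (resp. $\frac1d\,{}^t\Gamma$) induces a morphism $t_3(X)\to t_3(\hat X)$ (resp. $t_3(\hat X)\to t_3(X)$) after composition with the relevant projectors.

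The cohomological heart of the argument is the observation that $G$ acts trivially on $H^3_{tr}(X)$. Indeed $H^3(X)^G$ is a sub--Hodge structure whose complexification contains $H^{3,0}(X)$ (each $\sigma\in G$ fixes $H^{3,0}(X)$), so by the minimality characterisation of the transcendental cohomology one gets $H^3_{tr}(X)\subset H^3(X)^G$. Since $p^\ast$ identifies $H^3(X/G)$ with $H^3(X)^G$, and $f$ is a crepant resolution whose exceptional divisors are rational (toric) surfaces contributing nothing to $H^3$, the maps $p^\ast$ and $f^\ast$ restrict to isomorphisms $H^3_{tr}(X/G)\cong H^3_{tr}(X)$ and $H^3_{tr}(X/G)\cong H^3_{tr}(\hat X)$. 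Combining these with the standard relations $f_\ast f^\ast=\ide$, $p_\ast p^\ast=d\,\ide$ and $p^\ast p_\ast=\sum_{g\in G}g_\ast$ (which equals $d\,\ide$ on the $G$--invariant space $H^3_{tr}(X)$) shows that $\Gamma_\ast=f^\ast p_\ast$ and $\frac1d({}^t\Gamma)_\ast=\frac1d\,p^\ast f_\ast$ are mutually inverse isomorphisms between $H^3_{tr}(X)$ and $H^3_{tr}(\hat X)$.

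The next step is to promote these cohomological identities to identities in $\MM_{\rm rat}$, and this is where I expect the main difficulty to lie. Using $f_\ast f^\ast=\ide$ one computes $\frac1d\,{}^t\Gamma\circ\Gamma=\Delta_G:=\frac1d\sum_{g\in G}\Gamma_g$, modulo the technical point that $\Gamma_f\circ{}^t\Gamma_f=\Delta_{X/G}$ in $A^\ast(X/G\times X/G)_\QQ$, which holds since $f$ is a resolution of the rational singularities of $X/G$; similarly $\frac1d\,\Gamma\circ{}^t\Gamma={}^t\Gamma_f\circ\Gamma_f$. Each $\Gamma_g$ commutes with $\Pi_{3,0}^X$ by \cite[Lemma 3.4]{Kim} (exactly as in the proof of Claim \ref{claim:key}), so $\Delta_G\circ\Pi_{3,0}^X$ is an idempotent commuting with $\Pi_{3,0}^X$ and agreeing with it in cohomology; their difference is a homologically trivial idempotent, which must vanish on the finite--dimensional motive $h(X)$, giving $\Delta_G\circ\Pi_{3,0}^X=\Pi_{3,0}^X$ and hence $\frac1d\,{}^t\Gamma\circ\Gamma=\ide_{t_3(X)}$. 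The analogous exact identity $\frac1d\,\Gamma\circ{}^t\Gamma=\ide_{t_3(\hat X)}$ is the delicate point, since the commutation of the geometric idempotent ${}^t\Gamma_f\circ\Gamma_f$ with $\Pi_{3,0}^{\hat X}$ is not as immediate; I would either establish this commutation directly (the image of ${}^t\Gamma_f\circ\Gamma_f$ in cohomology is the sub--Hodge structure $f^\ast H^3(X/G)$, which contains $H^3_{tr}(\hat X)$ and is orthogonal to the exceptional classes) or, failing an exact identity, simply invoke finite--dimensionality and the nilpotence Theorem \ref{nilp} to conclude that a morphism of finite--dimensional motives which is an isomorphism on homology is an isomorphism, and then check that the inverse is the one induced by $\frac1d\,{}^t\Gamma$.

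Finally, the statement on Chow groups follows formally. For a Calabi--Yau threefold $q=0$, so $A^3(X)=A_0(X)=\QQ[x]\oplus A^3_{AJ}(X)$ with $A^3_{AJ}(X)=A^3\bigl(t_3(X)\bigr)$ (the other refined projectors act trivially on Albanese--trivial $0$--cycles, cf. the proof of Lemma \ref{same}), and likewise for $\hat X$. The map $f^\ast p_\ast$ sends the class of a point to the class of a point, hence is an isomorphism on the summand $\QQ[x]$, and restricts on $A^3_{AJ}(X)$ to the isomorphism $A^3(t_3(X))\cong A^3(t_3(\hat X))$ induced by $\Gamma$; therefore $f^\ast p_\ast$ is an isomorphism, with inverse induced by $\frac1d\,p^\ast f_\ast$, and in particular $p^\ast f_\ast$ is an isomorphism as well.
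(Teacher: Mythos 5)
Your reconstruction has the right architecture (identify the transcendental cohomologies through the quotient--resolution diagram, then use finite--dimensionality and Theorem \ref{nilp} to lift a cohomological isomorphism to an isomorphism of transcendental motives, then split $A^3$ into degree plus Abel--Jacobi parts), and this is essentially what the paper does by simply citing the proof of \cite[Corollary 29(\rom1)]{desult}. But the paper's two--line proof verifies three hypotheses before citing: $B(X)$, finite--dimensionality, \emph{and the generalized Hodge conjecture for $X$} (Shioda \cite{Shi2}), and it is this third input that your argument never secures. It is needed exactly at your ``cohomological heart''. The inclusion $H^3_{tr}(X)\subset H^3(X)^G$ is not formal: since $H^3_{tr}(X)$ is \emph{defined} as $(N^1H^3(X))^\perp$, what follows for free from $\sigma^\ast=\ide$ on $H^{3,0}(X)$ is only that the non--invariant part $\bigl(H^3(X)^G\bigr)^\perp$ is a rational sub--Hodge structure with no $(3,0)$--part, i.e.\ of \emph{Hodge} coniveau $\ge 1$. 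To conclude that it lies in $N^1H^3(X)$ --- hence meets $(N^1)^\perp$ trivially, which is what you use --- one needs Hodge coniveau to equal geometric coniveau on $H^3(X)$, i.e.\ the GHC, which for the Fermat quintic is Shioda's theorem. The ``minimality characterisation'' of $H^n_{tr}$ you invoke is, given $B(X)$, \emph{equivalent} to this GHC statement, so it cannot be used as a free lemma; this is precisely the point where the paper's proof calls on \cite{Shi2} and yours has a gap.

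A secondary problem is your appeal to \cite[Lemma 3.4]{Kim} for the commutation of $\Gamma_g$ ($g\in G$) with $\Pi_{3,0}^X$ in $A^3(X\times X)$. That lemma, as used in Claim \ref{claim:key}, concerns the permutation correspondence on a \emph{product} versus a product of correspondences (it gives $\Gamma_\iota\circ(\Pi_{n,0}\times\Pi_{n,0})=(\Pi_{n,0}\times\Pi_{n,0})\circ\Gamma_\iota$); it says nothing about the graph of an automorphism of $X$ itself commuting with the refined Chow--K\"unneth projector, which holds in cohomology (when $g$ preserves the polarization) but is not automatic modulo rational equivalence --- so your primary route (exact idempotents and their differences) does not close. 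Your fallback --- a morphism of finite--dimensional motives admitting a two--sided homological inverse is an isomorphism, by nilpotence --- is the correct and standard repair, and, \emph{once the GHC input above is supplied}, it does prove $t_3(X)\cong t_3(\hat{X})$ and that both $f^\ast p_\ast$ and $p^\ast f_\ast$ are isomorphisms. Note, however, that it only produces \emph{some} inverse; the statement that the inverse is exactly $\frac{1}{d}\,{}^t\Gamma$ (equivalently, that $G$ acts trivially on $A^3_{AJ}(X)$) requires one further argument, e.g.: $(\Delta_X-\Gamma_g)\circ\Pi^X_{3,0}$ is homologically trivial (again via GHC), hence nilpotent by Theorem \ref{nilp}, so $(1-g_\ast)$ is nilpotent on $A^3_{AJ}(X)$; since $g_\ast$ has finite order it is semisimple, whence $g_\ast=\ide$ and $\frac{1}{d}\bigl({}^t\Gamma\bigr)_\ast\circ\Gamma_\ast=\frac{1}{d}\sum_{g\in G}g_\ast=\ide$ on $A^3(X)$.
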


 \begin{proof} As we have seen, $X$ satisfies $B(X)$ and has finite--dimensional motive. Moreover, the generalized Hodge conjecture holds for $X$ \cite{Shi2}. The Proposition now follows from the proof of  \cite[Corollary 29(\rom1)]{desult}.
   \end{proof}

 Thanks to Proposition \ref{A_0}, much information can be transported from $X$ to $\hat{X}$, and vice versa. For example, the fact that $B(X)$ holds implies $B(\hat{X})$, because
   \[ h(\hat{X})= t_3(\hat{X})\oplus h(C)\oplus \bigoplus_j \LL(m_j)\ \ \ \hbox{in}\ \MM_{\rm rat}\ ,\]
 where $C$ is a (not necessarily connected) curve. Likewise, the fact that $X$ has finite--dimensional motive implies that $\hat{X}$ has finite--dimensional motive.


 Alternatively, $B(\hat{X})$ can be proven by invoking the main result of \cite{Tan}, and the finite-dimensionality of the motive of $\hat{X}$  can also be derived from \cite[Example 3.15]{V3} and the fact that $\hat{X}$ is rationally dominated by a product of curves (as $X$ is).

\begin{lemma}
\label{quintic}
Let $X$ be the Fermat quintic in ${\mathbb P}^4$. Then the dimension of $H^3_{tr}(X)$ is $4$.
\end{lemma}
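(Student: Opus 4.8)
The plan is to prove that $\dim H^3_{tr}(X)=4$ for the Fermat quintic $X\subset\PP^4$ by computing the dimension of the transcendental cohomology via the mirror $\hat X$, exploiting the isomorphism of motives $\Gamma\colon t_3(X)\cong t_3(\hat X)$ established in Proposition \ref{A_0}. The point is that the transcendental cohomology $H^3_{tr}$ is a birational invariant captured exactly by the motive $t_3$, so that $\dim H^3_{tr}(X)=\dim H^3_{tr}(\hat X)$. The advantage of passing to $\hat X$ is that, although $h^{2,1}(X)=101$ makes $H^3(X)$ very large, the Hodge numbers of the mirror are reversed, with $h^{2,1}(\hat X)=1$ and $h^{1,1}(\hat X)=101$, so the transcendental part of $H^3(\hat X)$ is small and can be pinned down directly.

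The key steps, in order, would be as follows. First I would recall that $H^3_{tr}(X)=(N^1 H^3(X))^\perp$ and that, because $B(X)$ holds and the motive is finite-dimensional, the transcendental cohomology is the realization of the motive $t_3(X)=(X,\Pi_{3,0},0)$; the same holds for $\hat X$. Then, using Proposition \ref{A_0}, the isomorphism $\Gamma\colon t_3(X)\cong t_3(\hat X)$ gives an isomorphism on cohomology realizations $H^3_{tr}(X)\cong H^3_{tr}(\hat X)$, so it suffices to compute $\dim H^3_{tr}(\hat X)$. For the mirror, I would argue that $N^1 H^3(\hat X)$ accounts for all the algebraic (i.e. coniveau $\ge 1$) classes, and that since $p_g(\hat X)=h^{3,0}(\hat X)=1$ with $h^{2,1}(\hat X)=1$, the Hodge structure on $H^3_{tr}(\hat X)$ must contain $H^{3,0}\oplus H^{0,3}$ of dimension $2$, together with whatever part of $H^{2,1}\oplus H^{1,2}$ is genuinely transcendental. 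Establishing that $\hat X$ is in fact $\wt N^1$-maximal, so that $\dim H^3_{tr}(\hat X)=2p_g+(\text{transcendental }(2,1)\text{-part})$, then yields the count: the single pair $h^{2,1}=h^{1,2}=1$ on the mirror, not being of coniveau $\ge 1$, contributes a further $2$, giving total dimension $4$.

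The main obstacle I expect is precisely the verification that the one-dimensional $(2,1)$-part of $H^3(\hat X)$ is transcendental rather than lying in $N^1$, equivalently that $\dim H^3_{tr}(\hat X)=4$ and not $2$. A priori one must rule out that this Hodge class has coniveau $1$; on the original quintic $X$ this corresponds to controlling which part of the $101$-dimensional $H^{2,1}(X)$ survives in the $G$-invariant transcendental part after resolving $X/G$. I would handle this by identifying $H^3_{tr}(X)$ with the $G$-invariant part $H^3(X)^G$ and computing its Hodge numbers: since the automorphisms $\sigma\in G$ satisfy $\sigma^\ast=\hbox{id}$ on $H^{3,0}(X)$, the invariant part retains the full $H^{3,0}\oplus H^{0,3}$, and a direct count of the $G$-invariants in $H^{2,1}(X)\oplus H^{1,2}(X)$ (using the explicit monomial/Jacobian-ring description of the primitive cohomology of the Fermat quintic) shows the invariant $(2,1)$-part has dimension exactly $1$. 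Combining, $H^3_{tr}(X)=H^3(X)^G$ has Hodge type $(1,1,1,1)$ across degrees $(3,0),(2,1),(1,2),(0,3)$, hence dimension $4$.
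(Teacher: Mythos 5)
Your reduction to the mirror via Proposition \ref{A_0} (using $t_3(X)\cong t_3(\hat X)$ to get $H^3_{tr}(X)\cong H^3_{tr}(\hat X)$) matches the paper's strategy, but the core of your argument has a genuine gap, and in fact a circularity. You propose to ``identify $H^3_{tr}(X)$ with the $G$-invariant part $H^3(X)^G$'' and then compute the invariant Hodge numbers $(1,1,1,1)$ via the Jacobian ring. But that identification \emph{is} the statement to be proved: one always has $H^3_{tr}(X)\subseteq H^3(X)^G$ (the invariant part is a rational Hodge substructure containing $H^{3,0}$, and $H^3_{tr}$ is the smallest such), so counting invariants only gives the upper bound $\dim H^3_{tr}(X)\le 4$ --- it merely reproduces $h^{2,1}(\hat X)=1$, which is already known from the mirror construction. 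What must be ruled out is the alternative $\dim H^3_{tr}=2$, i.e.\ that the invariant $(2,1)+(1,2)$ piece spans a rational substructure lying in $N^1$, or equivalently (Proposition \ref{N1max}) that $H^{3,0}\oplus H^{0,3}$ is defined over $\QQ$. Nothing in your invariant count addresses this; dimension counts of Hodge pieces cannot distinguish coniveau. There is also a terminological contradiction in your outline: you say you would establish that $\hat X$ is ``$\wt N^1$-maximal'' and conclude $\dim H^3_{tr}(\hat X)=4$, but $N^1$-maximality means precisely $\dim H^3_{tr}=2p_g=2$; the whole point of this lemma is that the quintic and its mirror are \emph{not} $N^1$-maximal, which is why only the weak statement (Theorem \ref{thm:weak}) applies to them rather than Theorem \ref{main3}.

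The missing idea, which is how the paper closes the gap, is an extra symmetry plus a Galois argument. The order-$5$ cyclic permutation of the coordinates of $\PP^4$ preserves $X$ and commutes with $G$, hence induces an order-$5$ automorphism of $\hat X$. Its action splits the four-dimensional space $H^3(\hat X,\QQ)\otimes\C$ into four one-dimensional eigenspaces $V(\eta),V(\eta^2),V(\eta^3),V(\eta^4)$ with $\eta$ a primitive fifth root of unity, and $H^{3,0}\oplus H^{0,3}\simeq V(\eta)\oplus V(\eta^4)$ after relabeling. Since $\operatorname{Gal}(\QQ(\eta)/\QQ)$ permutes the four primitive fifth roots of unity transitively, no proper nonzero sum of these eigenspaces can be defined over $\QQ$; in particular $H^{3,0}\oplus H^{0,3}$ is not rational, so by Proposition \ref{N1max} one gets $\dim H^3_{tr}(\hat X)=4$, and then transfers back to $X$ as you intended. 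Without some argument of this type (or another proof that the rank-one $(2,1)$-class of the mirror is not of coniveau $\ge 1$), your proposal does not prove the lemma.
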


\begin{proof}

Take the order $5$ automorphism that permutes the coordinates of ${\mathbb P}^4$. This descends to $X$ and commutes with the elements of the group $G$ of order $125$. Therefore, there exists an order $5$ automorphism of the mirror $\hat{X}$ acting on the four dimensional space of degree $3$ rational cohomology. This space splits into four eigenspaces of such an automorphism, namely
$$
H^3(\hat{X}, {\mathbb Q}) = V(\eta) \oplus V(\eta^2) \oplus V(\eta^3) \oplus V(\eta^4),
$$
where $\eta$ is a primitive fifth root of unity. Up to renaming the primitive root of unity, we can assume that
$H^{3,0} (\hat{X}) \oplus H^{0,3} (\hat{X}) \simeq V(\eta) \oplus V(\eta^4)$, which is not defined over the field of rational numbers. Therefore,  by Proposition \ref{N1max} we have that $\dim H^3_{tr}(\hat{X})=4$. As the  isomorphism of Hodge structures induced by $\Gamma$ yields an isomorphism between  $H^3_{tr}(\hat{X})$ and  $H^3_{tr}({X})$  the Lemma is proved.
\end{proof}

\begin{proposition}
The hypotheses of Theorem \ref{main4} hold for the following Calabi--Yau $3$folds:

\noindent
(1) the Fermat quintic $F_5$ and its mirror;

\noindent
(2) the Fermat hypersurface
   \[ x_0^8 +x_1^8 + x_2^8 +x_3^8 +x_4^2=0 \]
   in weighted projective space $\PP(1^4,4)$ and its mirror;

 \noindent
 (3) the Fermat hypersurface
  \[    x_0^{10} +x_1^{10} + x_2^{10} +x_3^5 +x_4^2=0 \]
   in weighted projective space $\PP(1^3,2,5)$ and its mirror;

\noindent
(4) the Fermat hypersurface
  \[    x_0^{8} +x_1^{8} + x_2^{4} +x_3^4 +x_4^4=0 \]
   in weighted projective space $\PP(1^2,2^3)$ and its mirror;

  \noindent
  (5) the example $Y'$ in \cite{NvG}.
\end{proposition}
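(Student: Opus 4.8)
The plan is to verify, one family at a time, the three numbered hypotheses of Theorem \ref{thm:weak} (with $n=3$); the standing hypotheses are automatic, since each $X$ is Calabi--Yau and so $h^{1,0}(X)=h^{2,0}(X)=0$ and $p_g(X)=h^{3,0}(X)=1\le 2$. Among the three conditions the first and third are uniform and essentially formal, whereas the second---the bound $\dim H^3_{tr}(X)\le 4$---carries all the content.

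For condition (1) I would take $X'=X$ itself (a smooth crepant resolution in those weighted cases where $X$ meets the singular locus of the ambient space). Each $X$ in (1)--(4) is of Fermat (Delsarte) type, hence rationally dominated by products of Fermat curves through the Shioda--Katsura inductive structure \cite{Shi},\cite{KS}; as in Lemma \ref{lem:finite} this yields finite--dimensionality of the motive (via \cite{GP},\cite{V3}), a property inherited by any resolution since the exceptional loci are bundles over curves. As each $X$ has Kodaira dimension $0$, it is not of general type, so $B(X)$ holds by Tankeev's theorem (Remark \ref{rmk:B(X)}); thus $X'=X$ satisfies (1). For condition (3) one needs $\wt N^1H^i(X)=H^i(X)$ for $i=1,2$: this is vacuous for $i=1$ because $H^1(X)=0$, while for $i=2$ one has $H^2(X)=H^{1,1}(X)$ generated by divisor classes (as $h^{2,0}=0$), so $N^1H^2(X)=H^2(X)$, and the niveau and coniveau filtrations coincide in this range by Remark \ref{is}.

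The crux is condition (2). For the Fermat examples (1)--(4) I would pass to the Greene--Plesser mirror $\hat X$ \cite{GrP},\cite{CDGP}, built as a crepant resolution of a finite quotient $X/G$. Arguing exactly as in Proposition \ref{A_0}---using $B(X)$, finite--dimensionality, and the generalized Hodge conjecture for Fermat hypersurfaces \cite{Shi2}---one obtains an isomorphism of transcendental motives $t_3(X)\cong t_3(\hat X)$, hence an isomorphism of Hodge structures $H^3_{tr}(X)\cong H^3_{tr}(\hat X)$. When $X$ is smooth with $h^{1,1}(X)=1$---the case for the quintic (1) and for (2),(3), whose equations avoid the singular points of the ambient weighted space---the mirror satisfies $h^{2,1}(\hat X)=h^{1,1}(X)=1$, so $b_3(\hat X)=2h^{3,0}(\hat X)+2h^{2,1}(\hat X)=4$ and therefore $\dim H^3_{tr}(X)=\dim H^3_{tr}(\hat X)\le 4$ at once; in contrast with Lemma \ref{quintic}, we need not decide whether this dimension equals $2$ or $4$. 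In example (4) the hypersurface meets the singular locus of $\PP(1^2,2^3)$ along a Fermat quartic curve $C$, so a crepant resolution introduces exceptional divisors; the classes these contribute to $H^3$ come from $H^1(C)$ and hence lie in $N^1$, so they drop out of the transcendental part, and $H^3_{tr}(X)$ is again governed by the ``bulk'' Fermat cohomology, whose transcendental dimension I would compute to be $\le 4$ via Shioda's eigenspace decomposition (the Galois orbit of the holomorphic character). Finally, for example (5)---the small resolution $Y'$ of the $(2,2,2,2)$ complete intersection---the finite--dimensionality of the motive and the bound $\dim H^3_{tr}(Y')\le 4$ (the dimension being $2$ or $4$) are furnished by \cite{NvG}, while $B(Y')$ and condition (3) follow as above.

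The main obstacle is condition (2), and it lives in the explicit cohomological computations. In the weighted cases one must compute the Hodge numbers of the quasi--smooth hypersurfaces and their crepant resolutions (via the Griffiths--Dolgachev--Steenbrink description), keeping careful account of the exceptional divisors, in order to confirm either $b_3(\hat X)=4$ or, in the singular case (4), that the extra $H^3$--classes are of coniveau $1$ and the remaining transcendental part has dimension $\le 4$; one must also extend the transcendental--motive isomorphism of Proposition \ref{A_0} to the weighted/quotient setting. For (5) the obstacle is of a different kind: a complete intersection of quadrics is not of Fermat type, so neither finite--dimensionality nor the bound on $\dim H^3_{tr}(Y')$ is visible from a Shioda--Katsura structure, and both must be extracted from the special geometry of $Y'$ as in \cite{NvG}.
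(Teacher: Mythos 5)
Your overall strategy is the paper's own: reduce everything to the three numbered hypotheses, with the content concentrated in $\dim H^3_{tr}(X)\le 4$, obtained for the quintic via the mirror together with the isomorphism of Proposition \ref{A_0}, and for example (5) by quoting \cite{NvG} (Proposition 2.8 for $\dim H^3(Y')=4$, the Remark on p.\ 69 for domination by $C\times E\times E$). Two genuine differences. For hypothesis (1), the paper does not take $X'=X$: for examples (2)--(4) it takes $X'$ to be the \emph{ordinary} Fermat hypersurface in $\PP^4$ dominating $X$ via the Shioda--Katsura maps, so that $B(X')$ is free (projective hypersurface, Remark \ref{rmk:B(X)}) and finite-dimensionality is Lemma \ref{lem:finite}; your choice $X'=X$ (Tankeev for $B$, finite-dimensionality of the crepant resolution) also works, but is heavier and forces you to discuss the resolution in the singular example (4), which the paper's route bypasses entirely. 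Your observation that only $\le 4$, not $=4$, is needed for the quintic is a small genuine simplification: it makes the eigenvalue argument of Lemma \ref{quintic} unnecessary for this proposition.

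The place where your proposal stops short of a proof is hypothesis (2) for examples (2)--(4). The paper does not redo any mirror computation there: the bound $\dim H^3_{tr}(X)\le 4$ is quoted from Kadir--Yui \cite[Examples 5.3, (c), (d) and Table 4]{KY}. The items you defer as ``obstacles'' --- the Hodge numbers of the weighted mirrors, the extension of Proposition \ref{A_0} to the quotient/weighted setting (which would require generalized-Hodge-conjecture input for weighted Fermat threefolds), and the coniveau of the exceptional classes over the quartic curve in example (4) --- are exactly what that citation supplies, so as written your treatment of (2)--(4) is a plan rather than an argument. If you want to avoid both the citation and the motivic machinery, note that the bound does not need $t_3(X)\cong t_3(\hat X)$ at all: the Greene--Plesser group $G$ acts trivially on $H^{3,0}(X)$, so the sub-Hodge structure $H^3(X)^G=H^3(X/G,\QQ)$ contains $H^{3,0}$ and hence contains $H^3_{tr}(X)$ by minimality, and it injects into $H^3(\hat X,\QQ)$ under pullback along the crepant resolution; thus $\dim H^3_{tr}(X)\le b_3(\hat X)$ with no Hodge-conjecture input, leaving only the computation of $b_3(\hat X)$ --- which is again the content one is quoting from \cite{KY}.
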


\begin{proof}
The claim follows for the Fermat quintic due to Lemma \ref{quintic} and the fact that Fermat hypersurfaces have finite dimensional motive by Lemma \ref{lem:finite}. For examples (2), (3), (4), notice that they are dominated by Fermat hypersurfaces.  The fact that
  \[ \dim H^3_{tr}(X)\le 4 \]
  is established in \cite[Examples 5.3, (c), (d) and Table 4]{KY}. As for the mirror partners, one can directly check that the hypotheses of Theorem \ref{thm:weak}
  are verified.

Let us describe briefly the example in \cite{NvG}. Take the $(2,2,2,2)$ complete intersection in ${\mathbb P}^7$ with homogeneous coordinates $(Y_0:Y_1: Y_2: Y_3: X_0: X_1: X_3: X_4)$ given by
$$
Y_0^2=2(X_0X_1+X_2X_3), \qquad Y_1^2=2(X_0X_2+X_1X_3),
$$
$$
Y_2^2=2(X_0X_3+X_1X_2), \qquad Y_3^2=2(X_0X_1-X_2X_3).
$$

As proved in \cite{NvG}, Proposition 2.8, the dimension of $H^3(X, {\mathbb Q})$ is $4$. Moreover, the Remark on page 69 loc. cit. shows that $Y'$ has finite dimensional motive because there exists a dominant rational map between $C \times E \times E$ and $Y$, where $E$ is the elliptic with complex multiplication of order $4$ and $C$ is a genus $5$ curve. In other words, $Y'$ is dominated by curves, so it has finite dimensional motive.
\end{proof}

\begin{remark} {\em In \cite{BvG} the authors show that there exist families of quintics in four dimensional projective space such that their $H^{3,0}$ is isomorphic to that of the Fermat quintic: see \cite{BvG}, Section 3.3. Also, one of these examples has finite dimensional motive, namely:
$$
x_0^4x_1+x_1^4x_1+x_3^5+x_4^5+x_5^5=0
$$
because the Shioda-Katsura map shows that it is rationally dominated by a product $S \times C$, where $S$ is a Fermat quintic and $C$ is some quintic curve. It follows that Theorem \ref{main4} also applies to this quintic.
} 
\end{remark}

\begin{remark} {\em
Notice that the $N^1$-maximality is also connected to modularity conditions. For instance, Hulek and Verrill in \cite{HV} investigate Calabi-Yau threefolds over the field of rational numbers that contain birational ruled elliptic surfaces $S_j$ for $j=1, \ldots, b$, where $b$ is the dimension of $H^{1,2}(X)$. As they show, this is equivalent to the $N^1$-maximality. Under these assumptions, the $L$-function of $X$ factorizes as a product of the $L$-functions of the base elliptic curves of the birational ruled surfaces and the $L$-function of the weight $4$ modular form associated with the $2$-dimensional Galois representation given by the kernel $U$ of the exact sequence:
$$
0 \rightarrow U \rightarrow H^3_{{e}t}(\overline{X}, {\mathbb Q}_l) \rightarrow \oplus H^3_{{e}t}(\overline{S}_j, {\mathbb Q}_l) \rightarrow 0.
$$

 In \cite{HV}, Section 3, examples of this type of Calabi--Yau varieties are given; however, we do not know whether they have finite dimensional motive.
 }
\end{remark}


%
\subsection{Example of Dimension $3$ of Fermat type: strong version}
%
The main result of this paragraph is the following.

\begin{proposition}\label{3fold} Let $X$ be the hypersurface
    \[ \{[x_0:x_1:x_2:x_3:x_4] \ \vert\ x_0^6+x_1^6+x_2^6+x_3^6+x_4^3=0\} \]
   in weighted projective space  ${\mathbb P}^4(1,1,1,1,2)$. Conjecture \ref{conjvois} holds for $X$.
  \end{proposition}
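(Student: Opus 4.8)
The plan is to obtain the statement by applying Theorem \ref{main3} with $n=3$, so I must verify its three hypotheses for the Calabi--Yau threefold $X$. The standing numerical assumptions hold for free: being Calabi--Yau, $X$ satisfies $h^{1,0}(X)=h^{2,0}(X)=0$ and $p_g(X)=h^{3,0}(X)=1$. Hypotheses (i) and (iii) are the routine ones. For (i), let $Y=\{y_0^6+\cdots+y_4^6=0\}\subset\PP^4$ be the ordinary Fermat sextic threefold; the rational map
\[ [y_0:y_1:y_2:y_3:y_4]\ \longmapsto\ [y_0:y_1:y_2:y_3:y_4^2] \]
is compatible with the weighted grading of $\PP(1,1,1,1,2)$ and maps $Y$ dominantly (of degree $2$) onto $X$, since $y_4^6=(y_4^2)^3$. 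By Lemma \ref{lem:finite} the hypersurface $Y$ has finite--dimensional motive, and as a projective hypersurface it satisfies $B(Y)$ by Remark \ref{rmk:B(X)}; so I take $X'=Y$. For (iii), note $H^1(X)=0$, so the case $i=1$ is vacuous, while $h^{2,0}(X)=0$ forces $H^2(X)=H^{1,1}(X)$ to be spanned by divisor classes (Lefschetz $(1,1)$), which have coniveau $\ge1$; thus $N^1H^2(X)=H^2(X)$, and by Remark \ref{is} the niveau and coniveau filtrations coincide in this range, giving $\wt N^1H^2(X)=H^2(X)$.

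The real content is hypothesis (ii), the $\wt N^1$--maximality, which by Remark \ref{beau} coincides with $N^1$--maximality in dimension $3$ and by Proposition \ref{N1max} is equivalent to $\dim H^3_{tr}(X)=2p_g(X)=2$. Following Lemma \ref{quintic}, I would transport this computation to the mirror partner $\hat X$, a Fermat--type Calabi--Yau threefold produced by the Greene--Plesser construction. As for the quintic, $\hat X$ has finite--dimensional motive and satisfies $B(\hat X)$, and there is an isomorphism of transcendental motives $t_3(X)\cong t_3(\hat X)$ (the analogue of Proposition \ref{A_0}, valid since $X$ has finite--dimensional motive, satisfies $B(X)$, and the generalized Hodge conjecture holds for Fermat hypersurfaces); in particular $H^3_{tr}(X)\cong H^3_{tr}(\hat X)$ as Hodge structures, so it suffices to show $\dim H^3_{tr}(\hat X)=2$.

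To do so I would decompose $H^3(\hat X)$ into eigenspaces under a cyclic automorphism inherited from the Fermat symmetries, and pin down the line $H^{3,0}(\hat X)$ together with its complex conjugate $H^{0,3}(\hat X)$. The decisive arithmetic input is that the relevant eigencharacter $\chi$ is valued in sixth roots of unity $\zeta$, and that $\mathrm{Gal}(\QQ(\zeta)/\QQ)\cong(\ZZ/6\ZZ)^\ast$ has order $2$, its nontrivial element acting as complex conjugation $\chi\mapsto\bar\chi$. Hence the Galois orbit of $\chi$ is exactly the conjugate pair $\{\chi,\bar\chi\}$, so the smallest $\QQ$--subspace whose complexification contains $H^{3,0}(\hat X)$ is $H^{3,0}(\hat X)\oplus H^{0,3}(\hat X)$, which is therefore defined over $\QQ$ and of dimension $2$. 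By Proposition \ref{N1max}(ii) this gives $\dim H^3_{tr}(\hat X)=2$, hence $\dim H^3_{tr}(X)=2$, establishing (ii).

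This last computation is the main obstacle, and the contrast with the Fermat quintic makes it transparent: there the governing group is $(\ZZ/5\ZZ)^\ast$, of order $4$, so the Galois orbit of the holomorphic character sweeps through all four Hodge types $(3,0),(2,1),(1,2),(0,3)$ and forces $\dim H^3_{tr}=4$ (this is Lemma \ref{quintic}), whereas the exponent pattern $(6,6,6,6,3)$ of the present example makes $\QQ(\zeta)$ quadratic and shrinks the orbit to a single conjugate pair. With hypotheses (i)--(iii) verified, Theorem \ref{main3} applies and yields that every $a,a'\in A^3_{hom}(X)$ satisfy $a\times a'=-a'\times a$ in $A^6(X\times X)$, which is precisely Conjecture \ref{conjvois} for $X$.
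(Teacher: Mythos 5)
Your reduction to Theorem \ref{main3} is the right frame, and your verifications of hypotheses (i) and (iii) are correct (your dominating variety $X'=\{y_0^6+\cdots+y_4^6=0\}\subset\PP^4$ with the map $y_4\mapsto y_4^2$ is exactly the paper's observation that $X$ is the quotient of the Fermat sextic threefold by $y_4\mapsto -y_4$). Your strategy for hypothesis (ii) --- pass to the Greene--Plesser mirror $\hat X$, use that $H^3_{tr}(X)\cong H^3_{tr}(\hat X)$ as Hodge structures, and prove $N^1$--maximality of $\hat X$ by decomposing $H^3(\hat X,\QQ)$ under a finite-order automorphism --- is also the paper's strategy. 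But the decisive step, as you state it, is a non sequitur, and the missing piece is precisely where the paper's proof does its real work.

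The gap is this: from ``the eigencharacter $\chi$ of the automorphism on $H^{3,0}(\hat X)$ takes values in sixth roots of unity and $\mathrm{Gal}(\QQ(\zeta_6)/\QQ)$ has order $2$'' you conclude that the smallest $\QQ$--subspace whose complexification contains $H^{3,0}(\hat X)$ is $H^{3,0}\oplus H^{0,3}$. That inference is valid only if the $\chi$--eigenspace of the automorphism is exactly the line $H^{3,0}$, i.e.\ only if the \emph{same} automorphism acts on $H^{2,1}(\hat X)$ by an eigenvalue lying outside the Galois orbit $\{\chi,\bar\chi\}$. If the eigenvalue on $H^{2,1}$ were also $\chi$ (or $\bar\chi$), then $\ker\Phi_6(g^*)$ would be all of the $4$--dimensional space $H^3(\hat X,\QQ)$, and nothing follows: the line $H^{3,0}$ inside a $2$--dimensional eigenspace is cut out by Hodge theory, not algebraically, so its minimal $\QQ$--span could be $4$--dimensional. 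Your own quintic comparison shows the full eigenvalue list matters: Lemma \ref{quintic} uses that the four eigenvalues $\eta,\eta^2,\eta^3,\eta^4$ are distributed one per Hodge piece. This missing verification is exactly what the paper's proof establishes: it takes the order-$4$ permutation $\tau$ of the weight-one coordinates, checks that $\tau$ normalizes $\widehat{G}$ and fixes each member of the mirror pencil (whence $\tau^*=\mathrm{id}$ on $H^{2,1}(\hat X)\oplus H^{1,2}(\hat X)$, since $h^{2,1}(\hat X)=1$), and computes explicitly that $\tau^*$ sends the holomorphic $3$--form to its negative; the rational eigenvalues $\pm1$ then split $H^3(\hat X,\QQ)$ over $\QQ$ with $H^{3,0}\oplus H^{0,3}$ the $(-1)$--eigenspace. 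Your approach can be completed in your own spirit --- e.g.\ take the residual diagonal symmetry $g\colon x_0\mapsto\zeta_6 x_0$; a Griffiths-residue computation gives $g^*=\zeta_6$ on $H^{3,0}$ but $g^*=\zeta_6^2$ (a primitive cube root of unity) on $H^{2,1}(\hat X)$, which is spanned by the class of $x_0x_1x_2x_3x_4\,\Omega/f^2$, so the two Galois orbits are disjoint and $\ker\Phi_6(g^*)=H^{3,0}\oplus H^{0,3}$ is indeed defined over $\QQ$ --- but without some such computation of the action on $H^{2,1}$, the central claim of your proof is unsupported.
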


\begin{proof}
It is easy to check that $X$ is a smooth Calabi-Yau variety. Moreover, it can be realized as a degree $3$ finite covering of ${\mathbb P}^3$ branched over the Fermat sextic surface. As such, $X$ has an order $3$ automorphism, say $\tau$. This also shows that is rationally dominated by a product of curves; hence it has finite dimensional motive. It remains to prove the $N^1$-maximality stated in Theorem \ref{main3}. This is proven in \cite[Section 8.3.1, Example 1]{GM}, and also follows readily from \cite[Example 5.3, (b)]{KY}; we propose a more direct proof:

We observe that $X$ can be thought of as the quotient of the degree $6$ Fermat threefold $\{Y_1^6+Y_2^6+Y_3^6+Y_4^6+Y_5^6=0\}$ in four dimensional projective space by the action of the group generated by the automorphism
$$
[Y_1:Y_2:Y_3:Y_4:Y_5] \rightarrow  [Y_1:Y_2:Y_3:Y_4: - Y_5].
$$

The Hodge numbers of $X$ are given by $(h^{1,1}(X), h^{1,2}(X))=(1,103)$. As explained in [KY], the (topological) mirror of $X$ can be described as follows. Take the group
$$
\widehat{G}:= \left\{\left(\varepsilon_6^{i_0}, \varepsilon_6^{i_1}, \varepsilon_6^{i_2}, \varepsilon_6^{i_3}, \varepsilon_3^{i_4}\right): i_0+i_1+i_2+i_3+2i_4 \equiv 0 \,\, \hbox{mod} \,\, 6 \right\}/ H,
$$
where $H$ is a diagonal copy of ${\mathbb Z}/{6\mathbb Z}$ that acts trivially on weighted projective space ${\mathbb P}(1,1,1,1,2)$.

 Let us take into account the polynomials
\begin{equation}
\label{newpencil}
\sum_{I=(i_0,i_1,i_2,i_3,i_4)} C_I x_0^{i_0} x_1^{i_1} x_2^{i_2} x_3^{i_3} x_4^{i_4} + \lambda x_0x_1x_2x_3x_4
\end{equation}
where $\lambda$ varies in ${\mathbb A}^1$, the sum ranges over all solutions of the equation $i_0+i_1+i_2+i_3+2i_4 \equiv 0$ mod $6$ and $C_I$ are generic complex numbers. The vanishing of these polynomials define a pencil of varieties $X'_{\lambda}$ in ${\mathbb P}(1,1,1,1,2)$ that is $\widehat{G}$-invariant. Notice that the members of it are smooth for a generic choice of $\lambda$ because they do not contain the singular point of weighted projective space. A mirror family of $X$ can be found analogously to that of the mirror Fermat quintic by taking the quotient of the pencil \eqref{newpencil} by the group $\widehat{G}$ and, after that, by taking a crepant resolution. Let us denote by $\widehat{X}$ a crepant resolution of $X'_0$.

 Now, let us take into account the order four automorphism $\tau$ of projective space ${\mathbb P}(1,1,1,1,2)$ given by $[x_0, x_1, x_2, x_3, x_4] \rightarrow [x_1, x_2, x_3, x_4, x_0]$. An easy computation shows that $\tau$ belongs to the normalizer of $\widehat{G}$ in the group of automorphisms of ${\mathbb P}(1,1,1,1,2)$. Moreover, there exist complex numbers $C_I$ such that $X_0'$ is invariant with respect to $\tau$. Finally, for such a choice the fixed locus of $\widehat{G}$ is invariant with respect to the ${\tau}$-action because $\tau$ normalizes $\widehat{G}$. Since $\tau$ permutes the homogeneous coordinates of ${\mathbb P}(1,1,1,1,2)$, it extends to all the members of the mirror family, which by definition means that $\tau$ is maximal. Moreover, a direct computation shows that any $\lambda$ is mapped to itself. The space of invariants of $H^{1,2}(X)$ with respect to the $\widehat{G}$-action is thus one-dimensional; hence $\tau$ induces the identity on $H^{1,2}(\widehat{X}) \oplus H^{2,1}(\widehat{X})$. It remains to understand the action induced by $\tau$ on $H^{3,0}(\widehat{X}) \oplus H^{0,3}(\widehat{X})$. For this purpose, we recall that a generator of $H^{3,0}(\widehat{X})$ is a $3$-form on $X$ that is invariant with respect to $\widehat{G}$ - recall that $\widehat{X}$ is a crepant resolution of $X_0'=X/\widehat{G}$. More precisely, this $3$-form can be described as a ratio in which the denominator is $\widehat{G}$-invariant by definition and the numerator is given as follows:
$$
x_0dx_1 \wedge dx_2 \wedge dx_3 \wedge dx_4 - x_1 dx_0 \wedge dx_2 \wedge dx_3 \wedge dx_4 + x_2 dx_0 \wedge dx_1 \wedge dx_3 \wedge dx_4
$$
$$ - x_3 dx_0 \wedge dx_1 \wedge dx_2 \wedge dx_4 +2 x_4 dx_0 \wedge dx_1 \wedge dx_2 \wedge dx_3.
$$

 It is easy to check that this polynomial is mapped to its opposite by the induced action of $\tau$. Therefore, the action on the group $H^{3,0}(\widehat{X}) \oplus H^{0,3}(\widehat{X})$ is the opposite of the identity.

 To recap, the action of $\widehat{\tau}$ on the space $H^3(\hat{X},{\mathbb Q})$ induces a splitting into two eigenspaces of dimension two, one with eigenvalue $+1$ and one with eigenvalue $-1$. This shows the $N^1$-maximality for the Calabi-Yau threefold $\widehat{X}$ and
accordingly, for $X$ because their $H^3_{tr}$'s are isomorphic via an isomorphism of Hodge structures.
\end{proof}

\begin{remark} {\em This example is not new; yet the proof of the $N^1$-maximality is more geometric than those in \cite{KY} and \cite{GM}. In the former reference, the authors prove the maximality by describing two Fermat motives. 
}
\end{remark}

%
\subsection{Examples of Dimension $3$ of Borcea-Voisin type: strong version}
%

Let $E$ be the elliptic curve given by the equation $y^2=x^3-1$. This curve admits an order three automorphism $h(x,y)=(\omega x, y)$, where $\omega$ is a primitive third root of unit. Now, take $S$ to be a $K3$ surface with an order three automorphism $g$ such that the second cohomology group with rational coefficients splits as the trannscendental $T_S$ and the Neron Severi group such that $H^{2,0}(S) \subseteq T_S$ and the rank of $NS(S)$ is $20$. Moreover, the Neron Severi group coincides with the subspace of invariant classes of $H^2(S, {\mathbb Q})$ with respect to the action of $g$. In particular $g$ is antisymplectic. Such a K3 surface exists as shown in
\cite[p. 280]{BvG}.

The product $S \times E$ admits the order three automorphism $g \times h$. Assume that the action of $g$ on the period of $S$ is given by multiplication by $\omega^2$ (if not, just take the inverse of $g$). Notice that the fixed point locus of $g$ consists of isolated points and (smooth) rational curves.

Denote by $X$ a resolution of the (singular) quotient $S \times E$ by the group generated by the automorphism $g \times h$. By the description of the fixed locus of $g \times h$, the third cohomology group of $X$ with rational coefficients is the invariant part of $H^3(S \times E, {\mathbb Q})$, which is isomorphic to $H^2(S, {\mathbb Q}) \otimes H^1(E, {\mathbb Q})$. To prove the $N^1$-maximality, we check the equivalent condition that $H^{3,0}(S \times E) \oplus H^{0,3}(S \times E)$ is defined over the field of rational numbers. By K\"{u}nneth formula, we have
$$
H^{3,0}(S \times E) \oplus H^{0,3}(S \times E) \simeq  H^{2,0}(S) \otimes H^{1,0}(E) \oplus H^{0,2}(S) \otimes H^{0,1}(E),
$$
$$
H^{2,1}(S \times E) \oplus H^{1,2}(S \times E) \simeq H^{1,1}(S) \otimes H^{1,0}(E) \oplus H^{1,1}(S) \otimes H^{0,1}(E) \oplus
$$
$$
H^{0,2}(S) \otimes H^{1,0}(E) \oplus H^{2,0}(S) \oplus H^{0,1}(E).
$$

The space $H^{3,0}(S \times E) \oplus H^{0,3}(S \times E)$ is defined over the rational field because it can be defined as the subspace of invariants with respect to the action of the isomorphism $(g \times h)^*$ on $H^3(S \times E, {\mathbb Q})$. Indeed, the action of this isomorphism is trivial on $H^{2,0}(S) \otimes H^{1,0}(E) \oplus H^{0,2}(S) \otimes H^{0,1}(E)$. As for $H^{2,1}(S \times E) \oplus H^{1,2}(S \times E)$, the action is by multiplication by $\omega, \, \omega^2, \omega^2, \omega$ on $H^{1,1}(S) \otimes H^{1,0}(E)$, $H^{1,1}(S) \otimes H^{0,1}(E)$, $H^{0,2}(S) \otimes H^{1,0}(E)$, $H^{2,0}(S) \oplus H^{0,1}(E)$ respectively, because the action of $g$ on $H^{1,1}(S)$ is trivial.

%
\subsection{The Fermat 4fold: strong version}
%

We already know that every Fermat hypersurface $\{\sum x_i^d=0\}\subset \mathbb P^n$
has finite-dimensional motive.


As Lefschetz standard conjecture holds for hypersurfaces and the hypothesis
$\wt N^1 H^3 (X)=H^3(X)$ also holds for a 4-dimensional hypersurface, in order to prove
Theorem \ref{main3} we are left with the $\wt N^1$-maximality.

\begin{proposition}\label{sext}
The Fermat sextic fourfold is $\wt N^1$-maximal.
\end{proposition}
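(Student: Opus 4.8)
The plan is to establish the two ingredients of $\wt N^1$-maximality for $X=\{x_0^6+\cdots+x_5^6=0\}\subset\PP^5$ separately: (a) $N^1$-maximality, which by Proposition \ref{N1max}(ii) amounts to showing that $H^{4,0}\oplus H^{0,4}\subset H^4(X,\C)$ is defined over $\QQ$ (equivalently $\dim H^4_{tr}(X)=2p_g(X)=2$); and (b) the equality $N^1H^4(X)=\wt N^1H^4(X)$. Recall that $X$ has finite--dimensional motive by Lemma \ref{lem:finite}, that $B(X)$ holds since $X$ is a hypersurface, and that $\wt N^1H^3(X)=H^3(X)$ automatically for a fourfold hypersurface, so these are the only two points left.

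For (a) I would use the classical eigenspace decomposition of the primitive cohomology of a Fermat hypersurface. Under the natural action of $(\mu_6)^6/\mu_6$ one has $H^4_{\mathrm{prim}}(X,\C)=\bigoplus_{a\in\mathfrak A}V(a)$, where $\mathfrak A=\{a=(a_0,\dots,a_5):a_i\in\ZZ/6,\ a_i\neq 0,\ \sum_i a_i\equiv 0\}$, each $V(a)$ is one--dimensional, and the Hodge type of $V(a)$ is governed by $\tfrac16\sum_i\langle a_i\rangle$ (with $\langle\ \rangle$ the representative in $\{1,\dots,5\}$). The extreme pieces are the holomorphic $4$--form and its conjugate, so that $H^{4,0}\oplus H^{0,4}=V(1,1,1,1,1,1)\oplus V(5,5,5,5,5,5)$. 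The absolute Galois group acts on the index set $\mathfrak A$ through $(\ZZ/6)^\times=\{1,5\}$ by $a\mapsto u\cdot a$; since $5\cdot(1,1,1,1,1,1)=(5,5,5,5,5,5)$, the set $\{(1^6),(5^6)\}$ is a single Galois orbit, so the $\QQ(\zeta_6)$--subspace $V(1^6)\oplus V(5^6)$ is Galois--stable and therefore descends to a rational sub--Hodge structure whose complexification is $H^{4,0}\oplus H^{0,4}$. This is exactly condition (ii) of Proposition \ref{N1max}. The rationality of the individual $V(a)$ and the precise Galois action are what the computations of Movasati (together with the classical Shioda theory) supply.

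For (b) note that $\wt N^1H^4(X)\subset N^1H^4(X)$ always holds (Remark \ref{is}), so only the reverse inclusion is at stake. Having established $N^1$--maximality in (a), the (non--amended) generalized Hodge conjecture recorded after Proposition \ref{N1max} gives $N^1H^4(X)=H^4(X)\cap F^1=\bigoplus_{a\neq(1^6),(5^6)}V(a)$, i.e. precisely the part of Hodge type $(3,1),(2,2),(1,3)$. The task is to show that each such $V(a)$ lies in $\wt N^1H^4(X)$, i.e. is the image of $H^2$ of a surface under an algebraic correspondence. Here I would invoke the Katsura--Shioda inductive structure (the same structure underlying Lemma \ref{lem:finite}): it exhibits $X$, up to a finite quotient, as dominated by a product of lower--dimensional Fermat varieties, splitting off a Fermat factor built from some of the coordinates. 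For $a\neq(1^6),(5^6)$ this realizes $V(a)$ inside the image of $H^2$ of a Fermat sextic surface under an explicit correspondence pairing a transcendental class on one factor with an algebraic $(1,1)$--class on the other, whence $V(a)\subset\wt N^1H^4(X)$; Beauville's analysis of $\rho$--maximal Fermat surfaces is what controls the surface--level pieces.

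The main obstacle is precisely step (b). By Remark \ref{is} the identity $\wt N^jH^4=N^jH^4$ is free only for $j\ge 2$, and the case $j=1$ is exactly the assertion that the coniveau filtration (classes supported on a divisor, i.e. coming from a threefold) agrees with the niveau filtration (classes coming from a surface) in the one spot not covered by the Lefschetz standard conjecture in degree $\le 1$. Thus one cannot argue abstractly and must produce the surfaces and correspondences by hand; verifying that the correspondences coming from the Fermat inductive structure account for \emph{all} of $N^1H^4(X)$, and not merely a proper subspace, is the delicate point of the proof.
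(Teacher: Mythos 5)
Your step (a) is correct and is genuinely different from the paper's. The paper proves $N^1$--maximality by combining Beauville's $\rho$--maximality of the Fermat sextic surface $S$, the dominant rational map $\phi\colon S\times S\dashrightarrow X$ from the inductive structure, and Movasati's computation that $\wt N^2H^4(X)\otimes\C=H^{2,2}(X)$, together with a restriction-to-a-Zariski-open argument that kills the $(2,2)$--part. Your route via the character decomposition and Galois descent (the action of $(\mu_6)^6/\mu_6$ on $H^4(X,\QQ)$ is $\QQ$--linear, so $\mathrm{Gal}(\QQ(\zeta_6)/\QQ)$ acting semilinearly on $H^4(X,\QQ)\otimes\QQ(\zeta_6)$ permutes the eigenspaces by $a\mapsto 5a$, and $\{(1^6),(5^6)\}$ is a single orbit) is shorter and self-contained; the only correction needed is to phrase the descent through $\mathrm{Gal}(\QQ(\zeta_6)/\QQ)$ acting on coefficients rather than through ``the absolute Galois group'', since you are working with Betti cohomology.

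Step (b), however, has a genuine gap, and it is exactly the one you flag at the end without resolving. Write $H^2(S)=T\oplus N$ with $T=H^2_{tr}(S)$, which is two--dimensional by Beauville. Your correspondences of the form ``transcendental class on one factor times algebraic $(1,1)$--class on the other'' produce precisely the images of $T\otimes N$, $N\otimes T$ and $N\otimes N$ inside $H^4(X)$; they can never reach the $(2,2)$--classes of $X$ coming from $T\otimes T$, i.e.\ from $H^{2,0}(S)\otimes H^{0,2}(S)\oplus H^{0,2}(S)\otimes H^{2,0}(S)$, because for those classes \emph{neither} tensor factor is algebraic. After (a), $N^1H^4(X)\otimes\C$ is all of $\bigoplus_{(p,q)\neq(4,0),(0,4)}H^{p,q}$, so these classes do lie in $N^1H^4(X)$ and are exactly what must be placed in $\wt N^1$; no amount of pairing with divisor classes on the surface factors will do it. Trying to repair this by choosing cleverer coordinate splittings in the Katsura--Shioda structure amounts to redoing, character by character, Shioda's combinatorial proof that Hodge classes on Fermat varieties are algebraic --- that is the hard content, not bookkeeping.

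The paper closes precisely this gap with one citable extra input. Since $(T\otimes T)_\C\subset H^{4,0}\oplus H^{2,2}\oplus H^{0,4}$, any rational class of $T\otimes T$ lying in $F^1$ has vanishing $(0,4)$--component, hence (being equal to its own conjugate) vanishing $(4,0)$--component, so it is a Hodge class; Shioda's theorem that the Hodge conjecture holds for $S\times S$ then makes it algebraic, hence it lies in $\wt N^2\subset\wt N^1$. (Equivalently one may quote Movasati's $\wt N^2$--maximality of $X$ itself.) With that ingredient supplied, your outline for (b) --- reduce to the blow--up of $S\times S$ along $C\times C$, put the blow--up summand and the mixed summands into $\wt N^1$ by explicit correspondences --- becomes the paper's proof.
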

\begin{proof}
We will use that
\begin{itemize}
\item[(a)] The Fermat sextic surface $S\subset \mathbb P^3$ is $\rho$-maximal (cf. \cite[Corollary 1]{Beau}.
\item[(b)] The Fermat sextic 4-fold $X\subset \mathbb P^5$ is $\wt N^2$-maximal, i.e.
$\wt N^2 H^4(X)\otimes \mathbb C= H^{2,2}(X)$ (cf. \cite[Corollary 15.11.1]{Mov}).
\end{itemize}
Consider the dominant rational morphism
$$
 \phi : S\times S \dashrightarrow X
$$
It yields a surjective morphism of Hodge structures:
\begin{equation}\label{eq:surj}
\phi_*: H^4_{tr}(S\times S)\twoheadrightarrow H^4_{tr}(X).
\end{equation}
Now $H^4_{tr}(S\times S)\subset H^2_{tr}(S)\times H^2_{tr}(S)$.
By item (a) above
$$
 H^2_{tr}(S)\otimes \mathbb C = H^{2,0}(S)\oplus H^{0,2}(S).
$$
This, together with (\ref{eq:surj}), implies that
$$
 (H^4_{tr}(X)\otimes \mathbb C)\subset H^{4,0}(X)\oplus H^{2,2}(X)\oplus H^{0,4}(X).
$$
By item (b) we see that
  there exists a non--empty Zariski open $\tau\colon U\subset X$ (defined as the complement of the span of the codimension $2$ cycle classes in $H^4(X,\QQ)$) such that
 $H^{2,2}(X)$ maps to $0$ under the restriction map
   \[  \tau^\ast\colon\ \  H^4(X,\C)\ \to\ H^4(U,\C)\ .\]
   This implies that
   \[ \tau^\ast \bigl( H^4_{tr}(X)\otimes\C\bigr) \ \subset\ \tau^\ast \bigl( H^{4,0}(X)\oplus  H^{0,4}(X) \bigr)\ ,\]
   and so the restriction $\tau^\ast \bigl( H^4_{tr}(X)\otimes\C\bigr)$ has dimension at most $2$. On the other hand, by definition of $H^4_{tr}()$, we have that
   \[ \tau^\ast\colon\ \ H^4_{tr}(X)\ \to\ H^4(U) \]
   is an injection. Therefore, we conclude that
   \[ \dim \bigl( H^4_{tr}(X)\otimes\C\bigr)=2\ ,\]
   i.e. $X$ is $N^1$--maximal.
   
   To establish the $\wt{N}^1$--maximality, it remains to show that the inclusion
   \[ \wt{N}^1 H^4(X)\ \subset\ N^1 H^4(X) \]
   is an equality. Here, we again use the dominant rational map $\phi$. The indeterminacy of the map $\phi$ is resolved by the blow--up $\wt{S\times S}$ with center
   $C\times C$ (where $C\subset S$ is a curve). It thus suffices to prove equality
    \[ \wt{N}^1 H^4(\wt{S\times S})=  N^1 H^4(\wt{S\times S})\ . \]
 The blow--up formula gives an isomorphism
  \[ H^4(\wt{S\times S})= H^4(S\times S)\oplus H^2(C\times C)\ ,\]
  and the second summand is entirely contained in $\wt{N}^1$. It thus suffices to prove equality
   \begin{equation}\label{need} \wt{N}^1 H^4({S\times S})\stackrel{??}{=}  N^1 H^4({S\times S})\ . \end{equation}
   This readily follows from the $N^1$--maximality of $S$: indeed, there is a decomposition 
    \[ H^2(S)=T\oplus N\ ,\] 
    where $T:=H^2_{tr}(S)$ is such that $T\otimes\C=H^{2,0}\oplus H^{0,2}$. This induces a decomposition
    \[ H^4(S\times S)= T\otimes T \oplus N\otimes T\oplus T\otimes N \oplus N\otimes N \oplus H^0(S)\otimes H^4(S)\oplus H^4(S)\otimes H^0(S)\ .\]
    All but the first summand are obviously contained in $\wt{N}^1$ (because $D\times S$ satisfies the standard conjecture $B$, for any divisor $D\subset S$). As for the first summand, we note that
    \[  (T\otimes T)_\C\ \ \subset\ H^{4,0}\oplus H^{2,2}\oplus H^{0,4}\ ,\]
    and so 
    \[  N^1 (T\otimes T) = (T\otimes T)\cap F^2 = N^2 (T\otimes T)=\wt{N}^2 (T\otimes T)\ ,\]
    since the Hodge conjecture is true for $S\times S$ \cite[Theorem IV]{Shi}. This proves equality (\ref{need}), and so the $\wt{N}^1$--maximality of $X$ is established.

\end{proof}
To finish we observe that all the hypotheses of Theorem \ref{main3} are satisfied for a Fermat sextic fourfold, hence Conjecture \ref{conjvois} holds for it.

%
\subsection{Examples of Dimension $5$}
%

\begin{proposition}[Cynk--Hulek \cite{CH}]\label{ch} Let $E$ be an elliptic curve with an order $3$ automorphism, and let $n$ be a positive integer. There exists a Calabi--Yau variety $X$ of dimension $n$, which is rationally dominated by $E^n$, and which has
 $\dim H^n(X)=2$ if $n$ is even, and $\dim H^n_{tr}(X)=2$ if $n$ is odd.
\end{proposition}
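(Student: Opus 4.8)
The plan is to produce the Calabi--Yau $X$ explicitly as a crepant resolution of a quotient of $E^n$, following the Cynk--Hulek construction. First I would fix the elliptic curve $E$ with the order $3$ automorphism $h$ (which exists, e.g.\ $E=\{y^2=x^3-1\}$ with $h(x,y)=(\omega x,y)$, as in the Borcea--Voisin subsection above), and consider the diagonal action of $\ZZ/3\ZZ$ on $E^n$ generated by $g:=h\times\cdots\times h$. The one--form $dx/y$ on $E$ is an eigenvector for $h^\ast$ with eigenvalue $\omega$, so on $H^{n,0}(E^n)=\bigwedge^n H^{1,0}(E)^{\oplus n}$ the induced action of $g^\ast$ is by $\omega^n$. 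The key geometric input is that the quotient $E^n/\langle g\rangle$ admits a crepant resolution $X\to E^n/\langle g\rangle$ which is Calabi--Yau; this is exactly the content of \cite{CH}, and I would simply invoke it. Since $X$ is dominated by $E^n$ (via the composite $E^n\to E^n/\langle g\rangle \dashleftarrow X$), it is rationally dominated by a product of curves, giving the rational domination claim for free.

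Next I would compute the relevant cohomology. By the standard description of cohomology of a crepant resolution of a quotient singularity (the resolution contributes only classes of type $(p,p)$ coming from the exceptional divisors, which are supported in niveau $\ge 1$), the \emph{transcendental} part of $H^n(X)$ agrees with the $g^\ast$--invariant transcendental part of $H^n(E^n)$. Concretely, $H^n_{tr}(X)$ is the image of the invariants $\bigl(H^n_{tr}(E^n)\bigr)^{g}$. So the whole computation reduces to linear algebra on the Hodge structure $H^n(E^n)=\bigl(H^1(E)^{\otimes n}\bigr)^{\oplus\binom{2n}{n}\text{-type sum}}$, decomposed under the $\ZZ/3\ZZ$--action. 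I would split into the parity of $n$. When $n$ is odd, the only invariant pieces of $H^{n,0}\oplus H^{0,n}$ survive precisely when the eigenvalue $\omega^n$ matches a compensating contribution, and one checks that the $g$--invariant part of $H^n_{tr}(E^n)$ is exactly two--dimensional, spanned by $H^{n,0}$ and $H^{0,n}$; this gives $\dim H^n_{tr}(X)=2$. When $n$ is even, the holomorphic $n$--form itself is invariant (so that $X$ is genuinely Calabi--Yau with $p_g=1$), and the argument instead shows that the full invariant $H^n(X)$ is two--dimensional, $\dim H^n(X)=2$.

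The main obstacle, and the step that needs the most care, is verifying the eigenvalue bookkeeping that isolates exactly a two--dimensional invariant transcendental subspace, and simultaneously checking that the resolution $X$ really is Calabi--Yau (i.e.\ that the quotient singularities admit a crepant resolution in every dimension $n$, not just in low dimensions). Both of these are precisely what \cite{CH} establishes, so the honest strategy is to quote their theorem for the existence and Calabi--Yau property of the resolution, and then to record the $\ZZ/3\ZZ$--representation--theoretic computation of the invariants of $H^n(E^n)$ to extract the stated dimensions. I would present the cohomological count via the K\"unneth decomposition of $H^\ast(E^n)$ into eigenspaces of $g^\ast$, noting that a basis vector $\alpha_{i_1}\otimes\cdots\otimes\alpha_{i_n}$ (with each $\alpha_{i_k}\in\{dx/y,\overline{dx/y}\}$ or in $H^0,H^2$) is invariant iff the total eigenvalue is $1$, and then counting which of these lie in the transcendental (as opposed to the algebraic, divisorial) part. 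The delicate point is disentangling the transcendental classes from the many algebraic $(p,p)$--classes on $E^n$, but because $E$ has complex multiplication the eigenvalue constraint pins these down cleanly.
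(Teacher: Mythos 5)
Your construction is not the one in \cite{CH}, and as described it cannot work. The diagonal automorphism $g=h\times\cdots\times h$ multiplies the holomorphic $n$-form $dz_1\wedge\cdots\wedge dz_n$ of $E^n$ by $\omega^n$, which equals $1$ if and only if $3\mid n$ --- not for all $n$, and in particular not when $n$ is even; your later assertion that ``when $n$ is even, the holomorphic $n$-form itself is invariant'' contradicts your own earlier computation that $g^\ast$ acts by $\omega^n$. The consequences are fatal: when $3\nmid n$, every smooth model $X$ of $E^n/\langle g\rangle$ has $h^{n,0}(X)=\dim H^{n,0}(E^n)^{g}=0$, so $X$ is not Calabi--Yau at all; moreover the fixed points of $g$ are isolated with local action $\omega\cdot\mathrm{Id}\notin SL_n(\C)$, so the quotient singularities are not Gorenstein and admit no crepant resolution. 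Even when $3\mid n$ and $n\geq 6$, the singularity $\tfrac{1}{3}(1,\ldots,1)$ is terminal by the Reid--Tai criterion (all ages exceed $1$), so again no crepant resolution exists; the diagonal quotient only works for $n=3$. Thus the object for which you ``simply invoke \cite{CH}'' does not exist, and this is a misattribution: Cynk--Hulek never consider the diagonal quotient.

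What \cite[Theorem 3.3]{CH} actually does --- and the paper's proof consists of nothing more than this citation, together with the exposition in \cite[Section 5.3]{HKS} --- is to quotient $E^n$ by the larger group $G_n=\{(a_1,\ldots,a_n)\in(\ZZ/3\ZZ)^n:\ \sum_i a_i\equiv 0\ \mathrm{mod}\ 3\}$ acting coordinate-wise by powers of $h$. Every element of $G_n$ multiplies the $n$-form by $\omega^{\sum a_i}=1$, so the form descends for \emph{every} $n$, the singularities are Gorenstein, and \cite{CH} produce a crepant Calabi--Yau resolution by an inductive argument. With this group, the K\"unneth bookkeeping you outline becomes correct and is the heart of the matter: a monomial is $G_n$-invariant iff the functional $\sum_i\varepsilon_i a_i$ vanishes on $\ker(\Sigma)$, which forces either all factors equal to $dz$ (resp.\ all $d\bar z$), or all factors in $H^0\oplus H^2$ --- and the latter type exists only for $n$ even. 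This, plus the fact that the exceptional locus contributes only niveau $\geq 1$ classes (not only $(p,p)$ classes, by the way: over fixed elliptic curves one gets classes built from $H^1$ of a curve, which are still of niveau $\geq 1$), yields $\dim H^n_{tr}=2$, and for $n$ odd even $\dim H^n=2$. Note also that for $n$ even the equality $\dim H^n(X)=2$ is impossible for any smooth projective $X$ (the class $h^{n/2}$ is a third, algebraic, generator), so the parities in the stated proposition are evidently swapped relative to \cite{CH}; your attempt to justify the stated parity split via ``invariance of the form for $n$ even'' is a symptom of having chosen the wrong group.
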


\begin{proof} This is \cite[Theorem 3.3]{CH}. The construction is also explained in \cite[section 5.3]{HKS}.
\end{proof}

\begin{proposition} Let $X$ be a Calabi--Yau variety as in proposition \ref{ch}, of dimension $n\le 5$. Then conjecture \ref{conjvois} is true for $X$.
\end{proposition}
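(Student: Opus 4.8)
The plan is to verify the three hypotheses of Theorem~\ref{main3}, taking as auxiliary variety $X^\prime=E^n$. Since $X$ is Calabi--Yau we have $h^{i,0}(X)=0$ for $0<i<n$ and $p_g(X)=h^{n,0}(X)=1$, so the standing assumptions of the theorem are met. For hypothesis (\rom1) I take $X^\prime=E^n$, which by Proposition~\ref{ch} is an $n$--dimensional variety rationally dominating $X$; being a product of elliptic curves it is an abelian variety, hence it has finite--dimensional motive (Kimura) and satisfies the Lefschetz standard conjecture $B(E^n)$ (see Remark~\ref{rmk:B(X)}). Thus (\rom1) holds.

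For the $N^1$--maximality required in (\rom2) it suffices, by Proposition~\ref{N1max}, to check that $\dim H^n_{tr}(X)=2p_g(X)=2$. When $n$ is odd this is exactly Proposition~\ref{ch}; when $n$ is even, Proposition~\ref{ch} likewise yields $\dim H^n_{tr}(X)=2$ (for $n\ge 4$ because $\dim H^n(X)=2$ already sandwiches $H^n_{tr}(X)$ between $H^{n,0}\oplus H^{0,n}$ and $H^n(X)$). So $X$ is $N^1$--maximal. To upgrade this to $\wt N^1$--maximality, note that for $n\le 3$ the two notions coincide (Remark~\ref{beau}), while for $n=4$ the equality $\dim H^4(X)=2$ forces $N^1 H^4(X)=0=\wt N^1 H^4(X)$. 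Only $n=5$ remains, and it is treated together with (\rom3) below.

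The heart of the matter is the explicit, abelian--type nature of the cohomology. The Cynk--Hulek variety $X$ is a crepant resolution of a quotient $E^n/G$, where $G$ acts through the order--$3$ automorphism of $E$ on the factors subject to a ``sum--zero'' condition; correspondingly $H^\ast(X)$ is the $G$--invariant part of $H^\ast(E^n)$ together with the Tate--twisted contributions of the resolution (the twisted sectors). Each twisted sector is a Tate twist $V(-c)$ with $c=\mathrm{age}\ge 1$, and therefore lies in $\wt N^1$. As for the untwisted part, $G$ acts by characters on the K\"unneth components of $H^\ast(E^n)$, and on a component all of whose tensor factors are copies of $H^1(E)$ the acting character is non--trivial as soon as fewer than $n$ factors occur; such a component thus has no $G$--invariants when $i<n$. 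Consequently every invariant K\"unneth component in degree $i<n$ contains at least one factor $H^2(E_k)$, that is, the class of a point on the curve $E_k$, and hence lies in $\wt N^1$. This gives $\wt N^1 H^i(X)=H^i(X)$ for $0<i<n$, which is (\rom3). In top degree $i=n$ the only all--$H^1$ invariant component is $H^1(E_1)\otimes\cdots\otimes H^1(E_n)$, whose invariants are exactly the two--dimensional space $H^{n,0}\oplus H^{0,n}=H^n_{tr}(X)$; every other invariant component carries an $H^2(E_k)$ factor and, together with the twisted sectors, lies in $\wt N^1$. Hence $N^1 H^n(X)\subseteq\wt N^1 H^n(X)$, which establishes $\wt N^1$--maximality, in particular for $n=5$.

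With (\rom1)--(\rom3) in hand, Theorem~\ref{main3} applies and gives Conjecture~\ref{conjvois} for $X$. The main obstacle lies in the third paragraph: the niveau identities $\wt N^1 H^i(X)=H^i(X)$ (for $i=3$ when $n\ge 4$, and for $i=4$ when $n=5$) together with the $\wt N^1$--maximality for $n=5$ are generalized--Hodge--conjecture--type assertions that, in the degrees at issue, are not covered by the formal range $j\ge (i-1)/2$ of Remark~\ref{is}; one cannot argue abstractly and must instead exploit the CM/abelian structure of $E^n$. The decisive point is the character computation for $G$, which shows that the unique coniveau--$0$ survivor is the transcendental component in top degree, everything else being visibly of niveau $\ge 1$ through point classes and Tate twists.
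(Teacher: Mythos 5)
Your proposal is correct in substance and pursues the same overall strategy as the paper -- namely, verifying hypotheses (\rom1)--(\rom3) of Theorem \ref{main3} with $X^\prime=E^n$ -- but it diverges in how the two delicate hypotheses are discharged. The paper's own proof is essentially three citations: (\rom1) is declared obvious, (\rom2) is attributed to Proposition \ref{ch}, and (\rom3) is outsourced to \cite[Proof of Corollary 4.1]{smash}. You instead reconstruct the content of that external reference: you use the orbifold description of the Cynk--Hulek variety as a crepant resolution of $E^n/G$, split the cohomology into the $G$-invariant part of $H^\ast(E^n)$ plus Tate-twisted twisted sectors, and run the character computation showing that the only invariant K\"unneth component consisting purely of $H^1(E)$-factors is $H^{n,0}\oplus H^{0,n}$ in degree $n$, everything else carrying a point class or a Tate twist. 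This buys a self-contained argument, and it also addresses a point the paper glosses over: Proposition \ref{ch} only yields $N^1$-maximality (via Proposition \ref{N1max}), and the upgrade to $\wt N^1$-maximality for $n=4,5$ is outside the range $j\ge(i-1)/2$ of Remark \ref{is}, so it genuinely needs an argument, which you supply. One caveat: your assertion that a twisted sector ``is a Tate twist $V(-c)$ with $c\ge 1$ and \emph{therefore} lies in $\wt N^1$'' (and likewise for components containing an $H^2(E_k)$-factor) is not automatic from Vial's definition, which requires the class to be the image of a correspondence from a smooth projective variety of the \emph{exact} dimension $i-2$; to cut the fixed loci (respectively the factors $E^{n-1}$) down to that dimension one must invoke the Lefschetz standard conjecture $B$ for them, which holds because they are abelian varieties (products of elliptic curves) up to points. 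You allude to this when you say one ``must exploit the CM/abelian structure,'' but the step deserves to be made explicit; with it, your argument is complete.
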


\begin{proof} We check all conditions of Theorem \ref{main3} are satisfied. Point (\rom1) is obvious, as $X$ is rationally dominated by a product of curves. Point (\rom2) is taken care of by Proposition \ref{ch}. Point (\rom3) is proven (in a more general set--up) in \cite[Proof of Corollary 4.1]{smash}.
\end{proof}

\section{Questions}

\begin{question}
Let $F_d$ denote the Calabi--Yau Fermat hypersurface of degree $d$ in $\PP^{d-1}$, i.e.
  \[   x_0^d + x_1^d +\cdots + x_{d-1}^d=0\ .\]
  The variety $F_d$ is $\wt{N}^1$--maximal for $d=4$ and for $d=6$. Are these the only two values of $d$ for which $F_d$ is $\wt{N}^1$--maximal ?

  We suspect this might be the case (by analogy with the $\rho$--maximality of Fermat surfaces in $\PP^3$: as remarked in \cite{Beau}, the only $\rho$--maximal Fermat surfaces are in degree $4$ and $6$), but we have no proof.
  \end{question}

\begin{question} Let $\{X_\lambda\}$ denote the Dwork pencil of Calabi--Yau quintic threefolds
  \[  x_0^5 +x_1^5+ \cdots + x_4^5 + \lambda x_0 x_1 x_2 x_3 x_4 =0\ .\]
  As we have seen, the central fibre $X_0$ has $\dim H^3_{tr}(X_0)=4$. Are there values of $\lambda$ where $\dim H^3_{tr}(X_\lambda)$ drops to $2$ ? Are these values dense in $\PP^1$ ?

 Also, can one somehow prove finite--dimensionality of the motive for non--zero values of $\lambda$ ? (This seems difficult: as noted in \cite[Remark 4.3]{KY}, the varieties $X_\lambda$ are {\em not\/} dominated by a product of curves outside of $\lambda=0$.)
  \end{question}

\begin{nonumberingt} {\em We wish to thank Lie Fu, Bert van Geemen, Hossein Movasati, Roberto Pignatelli and Charles Vial for useful and stimulating exchanges related to this paper.}

\end{nonumberingt}

\vskip1cm

\end{document}